\theoremstyle{plain}
\newtheorem*{theorem*}{Theorem}
\newtheorem*{lemma*} {Lemma}
\newtheorem*{corollary*} {Corollary}
\newtheorem*{proposition*}{Proposition}
\newtheorem*{conjecture*}{Conjecture}
\newtheorem{theorem}{Theorem}[section]
\newtheorem{lemma}[theorem]{Lemma}
\newtheorem*{theorem1*}{Theorem 1}
\newtheorem*{theorem2*}{Theorem 2}
\newtheorem*{theorem3*}{Theorem 3}
\newtheorem{proposition}[theorem]{Proposition}
\theoremstyle{remark}
\newtheorem{example*}{Example}
\newtheorem*{claim}{Claim}
\theoremstyle{definition}
\def\G{\Gamma}
\def\eps{\epsilon}
\def\gl{\mbox{GL}}   \def\Z{\Bbb{Z}} \def\R{\Bbb{R}} \def\C{\Bbb{C}}
\def\N{\Bbb{N}}   \def\ll{\langle} \def\rr{\rangle}
 \def\a{\alpha} \def\g{\gamma}  \def\bp{\begin{pmatrix}}
\def\sm{\setminus} \def\ep{\end{pmatrix}} \def\bn{\begin{enumerate}} 
 \def\rank{\op{rk}}  \def\en{\end{enumerate}}
\def\ba{\begin{array}} \def\ea{\end{array}} 
 \def\S{\Sigma}  \def\a{\alpha}  \def\ti{\tilde}
\def\id{\mbox{id}}   
\def\ker{\mbox{Ker}}\def\be{\begin{equation}} \def\ee{\end{equation}} 
 \def\hom{\mbox{Hom}}
    \def\rk{\op{rk}}
\def\ct{\C[t^{\pm 1}]} 
\def\op{\operatorname}
\def\GG{\mathcal{G}}
\def\i{\iota}
\def\co{\colon}
\def\e{\epsilon}
\begin{document}
\title{Splittings  of knot groups}
\author{Stefan Friedl}
\address{Mathematisches Institut\\ Universit\"at zu K\"oln\\   Germany}
\email{sfriedl@gmail.com}

\author{Daniel S. Silver}
\address{Department of Mathematics and Statistics\\ University of South Alabama}
\email{silver@southalabama.edu}

\author{Susan G. Williams}
\address{Department of Mathematics and Statistics\\ University of South Alabama}\thanks{The second and third
authors were partially supported by grants \#245671 and \#245615 from the Simons
Foundation.}
\email{swilliam@southalabama.edu}

\date{\today}
\begin{abstract}
Let $K$ be a knot of  genus $g$. If $K$ is fibered,  then it is well known that the knot group $\pi(K)$ splits only over a free group of rank $2g$.
We show that if $K$ is not fibered, then $\pi(K)$ splits over non-free groups of arbitrarily large rank.
Furthermore, if $K$ is not fibered, then $\pi(K)$ splits over every free group of rank at least $2g$. However, $\pi(K)$  cannot split over a  group of rank less than $2g$.
The last statement is proved using the recent results of Agol, Przytycki--Wise and Wise.
\end{abstract}
\maketitle

\section{Introduction}

We start out with a few definitions from group theory.
Let $\pi$ be a group. We say that \emph{$\pi$ splits over the subgroup $B$}
if $\pi$ admits an HNN decomposition with base group $A$ and amalgamating subgroup $B$. More precisely,
$\pi$ splits over the subgroup $B$ if there exists an isomorphism
\[ \pi\xrightarrow{\cong} \ll A,t\,|\, \varphi(b)=tbt^{-1} \mbox{ for all }\, b\in B\rr,\]
where $B \subset A$ are subgroups of $\pi$ and $\varphi\co  B \to A$ is a monomorphism.
In this notation, relations of $A$ are implicit. We will write such a presentation  more compactly as $ \ll A,t\,|\, \varphi(B)=tBt^{-1}\rangle.$

In this paper we are interested in splittings of knot groups.
Given a knot  $K\subset S^3$  we denote the knot group $\pi_1(S^3\sm K)$ by $\pi(K)$.
We denote by $g(K)$ the genus of the knot, the minimal genus of a Seifert surface $\Sigma$ for $K$.
It follows from the Loop Theorem and the Seifert-van Kampen theorem that we can split  the knot group $\pi(K)$
over the free group $\pi_1(\Sigma)$ of rank $2g(K)$. The \emph{rank} $\rk(G)$ of a group $G$ is the minimal size of a set of generators for $G$.

It is well known
that if $K$ is a fibered knot, that is, the knot complement $S^3\sm K$ fibers over $S^1$, then the group $\pi(K)$
splits only over free groups of rank $2g(K)$. (See, for example, Lemma \ref{lem:fibsplit}.)
We show  that this property characterizes fibered knots. In fact, we can say much more.

\begin{theorem}\label{thm:fibsplitintro}
Let $K$ be a non-fibered knot. Then $\pi(K)$ splits over non-free groups of arbitrarily large rank.
\end{theorem}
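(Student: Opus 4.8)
\medskip
\noindent\emph{A proof proposal.} The plan is to produce, for every $R$, a non-free subgroup of rank $>R$ over which $\pi(K)$ splits, by slicing the infinite cyclic cover of the exterior into longer and longer blocks. Fix a minimal genus Seifert surface $\Sigma$ and let $Y$ be the exterior of $K$ cut along $\Sigma$, so that, as recalled above, $\pi(K)=\langle \pi_1 Y,\,t \mid t\,\varphi_-(\pi_1\Sigma)\,t^{-1}=\varphi_+(\pi_1\Sigma)\rangle$, where $\pi_1\Sigma\cong F_{2g}$ and $\varphi_\pm$ are the two boundary inclusions; both are injective since $\Sigma$ is incompressible. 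Write the infinite cyclic cover as $\widetilde X=\bigcup_{m\in\Z}Y_m$, a bi-infinite chain of copies of $Y$ glued along lifts of $\Sigma$, with deck transformation $\tau$ carrying $Y_m$ to $Y_{m+1}$; for $n\ge 1$ put $Y_{[0,n]}=Y_0\cup_\Sigma\cdots\cup_\Sigma Y_n$ and $A_n=\pi_1 Y_{[0,n]}=\pi_1 Y_0\ast_{\pi_1\Sigma}\cdots\ast_{\pi_1\Sigma}\pi_1 Y_n$, the amalgamations being injective. Since $K$ is not fibered, $Y$ is not a product (Stallings), so $\varphi_\pm$ are proper; hence $\pi_1\widetilde X=\bigcup_n A_n$ is not finitely generated and the chain $A_1\subsetneq A_2\subsetneq\cdots$ is strictly increasing.

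The first step is to verify that $\pi(K)$ splits over $A_{n-1}$ with base group $A_n$, i.e. $\pi(K)\cong\langle A_n,\,t\mid t\,A_{n-1}\,t^{-1}=\tau(A_{n-1})\rangle$. This amounts to recognizing the latter HNN extension as $\pi_1\widetilde X\rtimes_\tau\Z$ once more: the kernel of its homomorphism to $\Z$ is the fundamental group of the graph of groups over the line whose vertex groups are the translates $\tau^{j}(A_n)$ and whose edge groups are translates of $A_{n-1}$, and a consecutive amalgam $\tau^{j}(A_n)\ast_{A_{n-1}}\tau^{j+1}(A_n)$ telescopes, since $\tau^{j}(A_n)$ and $\tau^{j+1}(A_n)$ are fundamental groups of overlapping codimension-zero submanifolds of $\widetilde X$ whose union is $Y_{[j,\,j+n+1]}$; passing to the direct limit recovers $\bigcup_m\pi_1 Y_{[\cdot,\cdot]}=\pi_1\widetilde X$ with $\tau$ acting as the shift. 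Put differently, the graphs of groups underlying the original splitting and the claimed one admit the common refinement by the line graph, so have the same fundamental group; at the level of the universal cover this is just the statement that one Bass--Serre tree is compatible with both.

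What remains, and what I expect to be the real obstacle, is to show that $A_n$ is non-free of unbounded rank. Non-freeness follows from the rank bound: $Y_{[0,n]}$ is a compact irreducible $3$-manifold whose boundary is a single closed surface of genus $2g$, so if $A_n$ were free then $Y_{[0,n]}$ would be a handlebody and $\rk A_n=2g$; thus it is enough to prove $\rk A_n\to\infty$. One would have to refine the mere fact that $\pi_1\widetilde X$ is infinitely generated so that it yields growth rather than just an infinite ascending chain. The non-fiberedness of $K$ is detected by a twisted Alexander polynomial that is not monic (Friedl--Vidussi); choosing a corresponding representation of $\pi(K)$ onto a fixed finite group $G$ and letting $\widehat Y_{[0,n]}$ be the associated cover of $Y_{[0,n]}$ (of degree at most $|G|$), this non-monicity should force the number of generators of $H_1(\widehat Y_{[0,n]};\Z)$ to tend to infinity with $n$; since $\pi_1\widehat Y_{[0,n]}$ has index at most $|G|$ in $A_n$, Schreier's bound then gives $\rk A_n\to\infty$. (When $\Delta_K$ itself is not monic one can try to argue more directly from the growth of the torsion homology of the cyclic branched covers of $K$.) Granting the rank estimate, the subgroups $A_{n-1}$ with $n\ge 2$ are non-free of arbitrarily large rank and $\pi(K)$ splits over each of them, which is the assertion of the theorem. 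The delicate point is precisely the passage from the fibering obstruction to a quantitative lower bound on $\rk A_n$; constructing the splittings and deducing non-freeness from the rank are comparatively routine.
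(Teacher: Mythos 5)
Your overall architecture matches the paper's: split $\pi(K)$ over the groups $A_n=A_{[0,n]}$ (this is the paper's Lemma~\ref{lem:moresplittings}), then show the $A_n$ are eventually non-free of unbounded rank. Your route to non-freeness differs from the paper's: you observe that $Y_{[0,n]}$ is a compact irreducible $3$-manifold with a single genus-$2g$ boundary surface, so a free $\pi_1$ would force it to be a genus-$2g$ handlebody, and hence unbounded rank alone kills freeness. That is a nice shortcut, provided you justify the irreducibility of $Y_{[0,n]}$ (a standard innermost-disc argument using that the frontier pieces $\Sigma$ are incompressible in the irreducible infinite cyclic cover). The paper instead quotes Freedman--Freedman: $\ker\e_K$ is not locally free, so some f.g.\ non-free subgroup lands in some $A_{[0,m]}$, and every $A_{[0,n]}$ with $n\ge m$ inherits it. Both work, but the paper's treats non-freeness and rank growth as independent claims, while yours makes the first contingent on the second.

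The genuine gap is in the rank growth step, and the sketch you give would not survive scrutiny. Non-monicity of a twisted Alexander polynomial says that the $H_1$ of the (finite cover of the) infinite cyclic cover, i.e.\ the direct limit $\varinjlim H_1(\widehat Y_{[0,n]})$, is not finitely generated over $\Z$ -- but a direct limit of abelian groups can be infinitely generated while each term is generated by a bounded number of elements (e.g.\ $\Z[\tfrac12]=\varinjlim(\Z\xrightarrow{\cdot 2}\Z)$, which is exactly what happens for $\Delta_K=2t-1$). So ``non-monicity forces the number of generators of $H_1(\widehat Y_{[0,n]})$ to blow up'' is not a valid inference, and I do not see how to repair it along these lines. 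The paper's actual input is of a different nature: by Przytycki--Wise, the surface subgroup $B=\pi_1\Sigma$ is \emph{separable} in $\pi(K)$, and since $\Sigma$ is not a fiber Hempel gives $A\supsetneq B$; hence there is a finite quotient $\a\colon\pi(K)\to G$ with $\a(B)\subsetneq\a(A)$. Feeding this into a graph-of-groups/Euler-characteristic count for $\ker(\a|_{A_n})$, together with the Reidemeister--Schreier index bound, yields the explicit linear lower bound $\rk A_n\ge (n+1)\bigl(\tfrac{1}{|\a(B)|}-\tfrac{1}{|\a(A)|}\bigr)$. It is precisely the passage to the fundamental group (rather than $H_1$) of the kernel in the finite quotient, enabled by subgroup separability, that produces quantitative growth; the Friedl--Vidussi non-monicity result is the right tool for the paper's Theorem~\ref{mainthm}, not for this rank estimate.
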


Neuwirth \cite[Problem~L]{Ne65} asked whether there exists a knot $K$ such that $\pi(K)$  splits over a \emph{free} group
of rank other than $2g(K)$. By the above, such a knot would necessarily have to be non-fibered.
Lyon \cite[Theorem~2]{Ly71}  showed that there does in fact exist a non-fibered genus-one knot $K$ with incompressible Seifert surfaces of arbitrarily large genus. This implies in particular that there exists a knot $K$ for which  $\pi(K)$
splits over free groups of arbitrarily large rank.  We give a strong generalization of this result.

\begin{theorem}\label{freesplitting} 
Let $K$ be a non-fibered knot. Then for any integer $k\geq 2g(K)$ there exists a splitting of $\pi(K)$ over a free group of rank $k$.
\end{theorem}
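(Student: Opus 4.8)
The plan is to treat even and odd ranks by quite different means: the even ranks reduce to a statement about incompressible Seifert surfaces, while the odd ranks require a purely algebraic manipulation of the even splittings.

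\textbf{Even ranks.} If $\Sigma$ is an incompressible Seifert surface of genus $h$ for $K$, then $\Sigma$ is non-separating in $X:=S^3\setminus\nu(K)$, and cutting $X$ along $\Sigma$ gives, exactly as for a minimal genus surface (Loop Theorem plus Seifert--van Kampen), an HNN decomposition $\pi(K)=\langle\,\pi_1(X|_\Sigma),\,t\mid t\,\pi_1(\Sigma)\,t^{-1}=\varphi(\pi_1(\Sigma))\,\rangle$ over the free group $\pi_1(\Sigma)$ of rank $2h$. So to realize every even rank $k=2h\ge 2g(K)$ it is enough to show that \emph{a non-fibered knot of genus $g$ bounds an incompressible Seifert surface of every genus $h\ge g$} --- this is the announced strengthening of Lyon's theorem. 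I would prove it by induction on $h$, the case $h=g$ being a minimal genus surface. For the step, take an incompressible genus $h$ Seifert surface $\Sigma_h$; since $K$ is non-fibered, Stallings' fibering criterion says the inclusion $\pi_1(\Sigma_h)\to\pi_1(X|_{\Sigma_h})$ is not onto, i.e. the sutured manifold $X|_{\Sigma_h}$ is taut but not a product. I would then attach a handle to $\Sigma_h$ (equivalently, perform an internal Murasugi sum of a cancelling pair of Hopf bands), producing a genus $h{+}1$ Seifert surface $\Sigma_{h+1}$ for the \emph{same} knot $K$, and show it is still incompressible --- either directly, by checking that a compressing disk would force $X|_{\Sigma_h}$ to be a product, or by invoking Gabai's result that Murasugi sum of Thurston norm minimizing surfaces along Hopf bands is again norm minimizing, hence incompressible.

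\textbf{Odd ranks.} This is where the real obstacle lies, and it is the reason the theorem is not simply a statement about embedded surfaces. An orientable non-separating properly embedded surface in $X$ must meet $\partial\nu(K)$ in an odd number of longitudes, so the rank of its fundamental group is \emph{even}; and a non-orientable properly embedded surface is one-sided in the orientable manifold $X$, so cutting along it gives an amalgamated product over $\pi_1$ of its orientation double cover rather than an HNN extension over the odd rank free group $\pi_1(\Sigma)$. This parity obstruction persists after passing to the infinite cyclic cover, since a surface there that is disjoint from all its deck translates descends to an embedded surface in $X$. Hence an HNN splitting of $\pi(K)$ over a free group of odd rank cannot be geometric in the naive sense, and one has to build it algebraically. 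My plan is to start from the even splitting $\pi(K)=\langle A,t\mid tBt^{-1}=\varphi(B)\rangle$ coming from an incompressible genus $h$ Seifert surface, so that $B$ is free of rank $2h$ and --- crucially, because $K$ is non-fibered --- both $B$ and $\varphi(B)$ are \emph{proper} in $A$. Using an element of $A$ outside $B\cup\varphi(B)$ provided by non-fiberedness, I would enlarge the edge group by one generator while simultaneously adjusting the base group and the stable letter, arranging that the enlarged edge group is still free, now of rank $2h{+}1$, and that the resulting HNN extension is still all of $\pi(K)$.

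\textbf{The hard part.} I expect this last step to be the crux. The naive moves fail: replacing $(A,B)$ by $(A\ast\Z,\,B\ast\Z)$ enlarges the group to $\pi(K)\ast_{\Z}(\Z\times\Z)$ instead of reproducing $\pi(K)$, and one cannot just adjoin a generator to $B$ inside $A$, because in an HNN extension the only elements of $A$ whose $t$-conjugate lands back in $A$ are those of $B$. Making the simultaneous change of base group, stable letter and edge group actually work will, I expect, require the fine structure of the non-product sutured manifold $X|_{\Sigma_h}$ --- say an essential properly embedded arc, disk or annulus in it --- both to control freeness and rank of the new edge group and to certify that the new HNN extension is neither a proper subgroup nor a proper overgroup of $\pi(K)$.
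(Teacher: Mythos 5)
The even-rank half of your strategy rests on the claim that a non-fibered knot of genus $g$ bounds an incompressible Seifert surface of every genus $h\ge g$, and this is false. The paper itself points out (in Section \ref{section:splitfree}, citing \cite{Wh73,Ly74a,Ka05}) that many non-fibered knots have a \emph{unique} incompressible Seifert surface up to isotopy, in which case there is no incompressible Seifert surface of any genus greater than $g$. Your proposed inductive step does not get around this: tubing $\Sigma_h$ to itself, or internally plumbing a cancelling pair of Hopf bands, produces a genus-$(h{+}1)$ Seifert surface that is generically compressible --- you can undo the tube --- and the invoked version of Gabai's theorem (Murasugi sum of norm-minimizing surfaces is norm-minimizing) does not apply, because a genus-$(h{+}1)$ Seifert surface for a genus-$g$ knot with $h\ge g$ is, by definition, \emph{not} norm-minimizing. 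Your phrase ``show it is still incompressible --- either directly, by checking that a compressing disk would force $X|_{\Sigma_h}$ to be a product'' is exactly where the argument breaks: for a knot with a unique incompressible Seifert surface, $X|_{\Sigma_g}$ is taut and non-product, yet every higher-genus Seifert surface compresses.

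For odd ranks you correctly identify the parity obstruction and correctly guess that the construction must be algebraic, but the step you flag as ``the hard part'' is not filled in, and your worry about the naive move is a genuine one that the paper resolves by a trick you have not found. The paper treats \emph{all} ranks $k\ge 2g(K)$ uniformly, with no even/odd case split, by first proving (Theorem \ref{thm:extendfree}) that there is a minimal-genus Seifert surface $\Sigma$ and an element $g\in A:=\pi_1(X(K)\smallsetminus\Sigma\times(0,1))$ with $\langle \pi_1(\Sigma),g\rangle=\pi_1(\Sigma)*\langle g\rangle$; the input here is word-hyperbolicity (or relative hyperbolicity) of the relevant JSJ piece plus Gromov's and Martinez-Pedroza's combination theorems, not sutured-manifold structure. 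Then an algebraic proposition finishes the job: writing $z_j=g^jzg^{-j}$ for a fixed nontrivial $z\in\pi_1(\Sigma)$, the group $F'=\langle \pi_1(\Sigma),z_1,\dots,z_l\rangle$ is free of rank $2g+l$ inside the \emph{old} base group $A$, and one presents $\pi(K)$ with $l$ redundant generators $c_j$ and relations $c_j=tz_jt^{-1}$, so that the new base group is the free product $A'=A*\langle c_1,\dots,c_l\rangle$ and the new edge map $\varphi'\colon F'\to A'$ extends $\varphi$ by $z_j\mapsto c_j$. Because the new edge group sits inside the old $A$ while only the \emph{target} is enlarged by a free factor, this avoids precisely the ``$\pi(K)*_{\Z}(\Z\times\Z)$'' problem you raise. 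In short: the paper does not produce new embedded surfaces at all --- every splitting of rank $>2g(K)$ it constructs (even as well as odd) is non-geometric --- whereas your proposal commits itself to a geometric route for the even ranks that is unavailable for knots with unique incompressible Seifert surfaces.
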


Note that an incompressible Seifert surface gives rise to a splitting over a free group of \emph{even rank}. The splittings over free groups of \emph{odd rank} in the theorem are therefore not induced by incompressible Seifert surfaces.

Feustel and Gregorac \cite{FG73} showed that if $N$ is an aspherical, orientable $3$-manifold
such that $\pi=\pi_1(N)$ splits over the fundamental group of a \emph{closed} surface $\Sigma\ne S^2$, then
this splitting can be realized topologically by a properly embedded surface.
(More splitting results can be found in  \cite[Proposition~2.3.1]{CS83}.)
The fact that fundamental groups of non-fibered knots can be split over free groups of odd rank shows that the result
of Feustel and Gregorac does not hold for splittings over fundamental groups of surfaces with boundary.

Theorems \ref{thm:fibsplitintro} and \ref{freesplitting} can be viewed as strengthenings of Stallings's fibering criterion.
We refer to Section \ref{section:stallings} for a precise statement.

Our third main theorem shows that Theorem \ref{freesplitting} is optimal.

\begin{theorem}\label{mainthm}\label{mainthm3}
If $K$ is a knot, then $\pi(K)$ does not split over a  group of rank less than $2g(K)$.
\end{theorem}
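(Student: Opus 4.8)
The plan is as follows. We may assume $K$ is non-trivial, so that $g:=g(K)\ge 1$; writing $\phi\co\pi(K)\to\Z$ for the abelianization, the Thurston norm $\|\phi\|_T$ of a generator of $H^1(S^3\sm K;\Z)$ equals $2g-1$. Suppose $\pi(K)$ splits over $B$, and set $n:=\rk(B)$; it suffices to prove $\|\phi\|_T\le n-1$, since this forces $n\ge 2g$. First I record two preliminary observations. Abelianizing the presentation $\ll A,t\,|\,\v(B)=tBt^{-1}\rr$ shows $H_1(\pi(K))=\big(H_1(A)/\langle\overline{\v(b)}-\bar b\rangle\big)\oplus\Z\bar t$; since $H_1(\pi(K))=\Z$, the first summand vanishes, so $\phi(t)=\pm1$ and $\phi(A)=0$ — the splitting is automatically compatible with $\phi$. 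Second, $B$ is finitely generated because $n<\infty$, and then a standard argument (using the normal form theorem for HNN extensions together with the fact that $\pi(K)$ is finitely presented, and that $B$, as a finitely generated subgroup of a $3$-manifold group, is finitely presented) shows that $A$ is finitely presented.

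The main tool is twisted Reidemeister torsion. For a representation $\g\co\pi(K)\to\gl(d,\F)$ over a field $\F$, let $\tau_\g$ denote the twisted Reidemeister torsion of $X:=S^3\sm K$ associated with $\g\otimes\phi\co\pi(K)\to\gl(d,\F\tpm)$, and let $\deg\tau_\g=\deg\Delta_1-\deg\Delta_0-\deg\Delta_2$ be its degree, the alternating sum of the degrees of the twisted Alexander polynomials of $X$. By the theorem of Friedl and Vidussi relating the Thurston norm to twisted Alexander polynomials — whose proof rests on the recent results of Agol, Przytycki--Wise and Wise that knot groups are virtually special — the Thurston norm is \emph{detected}: $\|\phi\|_T=\sup_\g\tfrac1d\deg\tau_\g$, the supremum being over all $\g$ factoring through a finite quotient of $\pi(K)$, and it is attained. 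Hence it suffices to establish the uniform bound $\deg\tau_\g\le d(n-1)$ for every such $\g$.

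To get this bound I would compute $\tau_\g$ directly from the splitting. Fix a finite presentation $A=\ll y_1,\dots,y_a\,|\,s_1,\dots,s_c\rr$ and words $b_1,\dots,b_n$ in the $y_i$ generating $B$, so that $\pi(K)=\ll y_1,\dots,y_a,t\,|\,s_1,\dots,s_c,\ t b_i t^{-1}\v(b_i)^{-1}\ (1\le i\le n)\rr$, and read $\tau_\g$ off the corresponding twisted chain complex $C_2\to C_1\to C_0$, whose boundary maps are the Fox Jacobians evaluated under $\g\otimes\phi$. The decisive point is that $\phi$ vanishes on $A$: consequently the block of $\partial_2$ indexed by the relators $s_k$ and the generators $y_i$ has all entries in $\F$ (no $t$ appears), the $t$-column of the $s_k$-rows vanishes, and $t$ survives only in the blocks associated with the $n$ new relators and the new $1$-cell $t$, where every entry has $t$-degree at most $1$. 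A Mayer--Vietoris / torsion computation then localizes $\deg\tau_\g$ to these degree-$\le1$ blocks: the $A$-part, being defined over $\F$, contributes $0$; the $n$ new $2$-cells contribute at most $nd$ to $\deg\Delta_1$; and the new $1$-cell contributes $d$ to $\deg\Delta_0$ (equivalently, duality throws a corresponding contribution onto $\Delta_2$), so that $\deg\tau_\g\le nd-d=d(n-1)$. Combining with the previous paragraph gives $\|\phi\|_T\le n-1$, i.e. $\rk(B)=n\ge 2g(K)$.

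The step I expect to be the main obstacle is the last one — converting the block structure of the Fox Jacobian into the inequality $\deg\tau_\g\le d(n-1)$ rigorously. One must bound the degrees of all three twisted Alexander polynomials $\Delta_0,\Delta_1,\Delta_2$ at once (the naive bound on $\deg\Delta_1$ alone does not suffice), allow for presentations of $A$ whose deficiency does not equal $n$ by inserting cancelling trivial cells, and verify the $\F(t)$-acyclicity that makes $\tau_\g$ well defined; as a consistency check, every inequality above is an equality for fibered knots, where $n=2g$ and $\deg\tau_\g=d(2g-1)$ for all $\g$. A secondary point to verify is that the Friedl--Vidussi detection theorem applies to $X=S^3\sm K$ in precisely the stated form, which it does since knot exteriors satisfy all of its hypotheses.
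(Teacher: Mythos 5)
Your proposal follows essentially the same approach as the paper: both rely on the Friedl--Vidussi theorem that a twisted Alexander-type invariant of $\pi(K)$ detects $2g(K)-1$, combined with a degree bound $\deg \le d(\rk(B)-1)$ extracted from the block structure of the Fox Jacobian of a splitting presentation (where the base-group block is constant in $t$ and only the $\rk(B)$ new relator rows carry a $t$-linear part). The paper casts the degree bound (Theorem~\ref{thm:technical}) in terms of Wada's invariant rather than twisted Reidemeister torsion, which sidesteps the $\F(t)$-acyclicity and $\Delta_0,\Delta_1,\Delta_2$ bookkeeping you flag as the potential obstacle, but this is a difference of presentation rather than of method.
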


The case $g(K)=1$ follows from the Kneser Conjecture and work of Waldhausen \cite{Wal68b}, as we show in Section \ref{section:genusone}. However, to the best of our knowledge, the classical methods
 of 3-manifold topology do not suffice to prove  Theorem \ref{mainthm}  in the general case.
We use the recent result  \cite{FV12a} that Wada's invariant  detects the genus of any knot.
This result in turn relies on the seminal work of
 Agol \cite{Ag08,Ag12}, Wise \cite{Wi09,Wi12a,Wi12b}, Przytycki--Wise \cite{PW11,PW12a} and Liu \cite{Liu11}.

Theorem \ref{mainthm} is of interest for several reasons:
\bn
\item It gives a completely group-theoretic chararcterization of the genus of a knot, namely
\[ g(K)=\frac{1}{2}\mbox{min}\{\rk(B)\,|\, \pi(K)\mbox{ splits over the group }B\}.\]
A different group-theoretic characterization was given by Calegari (see the proof of Proposition 4.4 in \cite{Ca09}) in terms of the `stable commutator length' of the longitude. 
\item Theorem \ref{mainthm} fits into  a long sequence of results showing that minimal-genus Seifert surfaces `stay minimal' even if one relaxes some conditions. For example, Gabai \cite{Ga83} showed that the  genus of an \emph{immersed} surface cobounding a longitude of $K$ is at least $g(K)$.
    Furthermore, minimal-genus Seifert surfaces give rise to surfaces of minimal complexity in the 0-framed surgery $N_K$
    (see \cite{Ga87}) and in most $S^1$-bundles over $N_K$ (see \cite{Kr99,FV12b}).

\item Given a closed $3$-manifold $N$ it is obvious that  $\rk(\pi_1(N))$ is a lower bound for the Heegaard genus $g(N)$ of $N$.
In light of Theorem \ref{mainthm} one might hope that this is in an equality; that is, that
$\rk(\pi_1(N))=g(N)$. This is not the case, though, as was shown by various authors (see \cite{BZ84,ScW07} and \cite{Li13}).
\en

The paper is organized as follows.
In Section \ref{section:hnnsplittings} we discuss several basic facts about HNN decompositions of groups.
In Section \ref{section:splitk} we recall that incompressible Seifert surfaces give rise to HNN decompositions of knot groups
and we characterize in Lemma \ref{lem:fibsplit} the splittings of fundamental groups of fibered knots.
In Section \ref{section:52} we consider the genus-one non-fibered knot $K=5_2$. We give explicit examples of splittings of the knot group 
over a non-free group and over the free group $F_3$ of rank 3, and inequivalent splittings of the knot group over $F_2$.

Section \ref{section:splitnonfree}  contains the proof of Theorem  \ref{thm:fibsplitintro}, and in Section
\ref{section:splitfree} we give the proof of Theorem \ref{freesplitting}.  In Section \ref{section:stallings} we show that these two theorems strengthen Stallings's fibering criterion.
In Section \ref{section:genusone} we give a proof of Theorem \ref{mainthm} for genus-one knots.
The proof relies mostly on the Kneser Conjecture and a theorem of Waldhausen. In Section \ref{section:wada} we review the definition of  Wada's invariant of a group. Finally, in Section \ref{section:proof} we prove Theorem \ref{thm:technical}, which combined with the main result of \cite{FV12a} provides a proof of Theorem \ref{mainthm}  for all genera.

We conclude this introduction with two questions. The precise notions are explained in Section \ref{section:hnnsplittings}.

\bn
\item Let $\pi$ be a word hyperbolic group and let $\e\colon \pi\to \Z$ be an epimorphism such that $\ker(\e)$ is not finitely generated.
Does $(\pi,\e)$ admit splittings over (infinitely many) pairwise non-isomorphic groups? (The group $\pi=\pi(K)$ satisfies these conditions if $K$ is a non-fibered knot.)
\item Let $K$ be a non-fibered knot of genus $g$. Does $\pi(K)$ admit (infinitely many) inequivalent splittings over the free group $F_{2g}$ on $2g$ generators?
\en

\subsection*{Conventions and notations.}
All groups are assumed to be finitely presented unless we say specifically otherwise. 
All $3$-manifolds are assumed to be connected, compact and orientable.
Given a submanifold $X$ of a $3$-manifold $N$, we denote by $\nu X\subset N$ an open tubular neighborhood of $X$ in $N$.
Given $k\in \N$ we denote by $F_k$ the free group on $k$ generators.

\subsection*{Acknowledgments.}
The first author wishes to thank the University of Sydney for its hospitality.
We are also very grateful to Eduardo Martinez-Pedroza, Saul Schleimer and Henry Wilton for very helpful conversations.

\section{Hnn-decompositions and splittings of groups}\label{section:hnnsplittings}

\subsection{Splittings of groups}
An \emph{HNN decomposition} of a group $\pi$ is a 4-tuple
$(A, B, t, \varphi)$ consisting of  subgroups $B \le A$ of $\pi$, a \emph{stable letter} $t \in \pi$, and an injective
 homomorphism $\varphi\co  B \to A$, such that the natural inclusion maps induce an isomorphism from $\ll A,t\,|\, \varphi(B)=tBt^{-1}\rr$ to $\pi$.
Alternatively, a HNN-decomposition of $\pi$ can be viewed as an isomorphism
\[ f\co \pi\xrightarrow{\cong} \ll A,t\,|\, \varphi(B)=tBt^{-1}\rr\]
where $\varphi\co B\to A$ is an injective map. We will frequently go back and forth between these two points of view.

We need a few more definitions:
\bn
\item
Given an HNN-decomposition $(A, B, t, \varphi)$
we refer to  the homomorphism $\e\colon \pi\to \Z$ that is given by $\e(t)=1$ and $\e(a)=0$ for $a\in A$ as the \emph{canonical epimorphism}.
\item 
Let $\pi$ be a group and let $\e\in \hom(\pi,\Z)$ be an epimorphism. A \emph{splitting of $(\pi, \e)$ over a subgroup $B$
(with base group $A$)} is an HNN decomposition $(A, B, t, \varphi)$ of $\pi$ such that $\e$ equals the canonical epimorphism. 
With the alternative point of view explained above, a splitting of $(\pi,\e)$ is an isomorphism
\[ f\co \pi\xrightarrow{\cong} \ll A,t\,|\, \varphi(B)=tBt^{-1}\rr\]
such that the following diagram commutes:
\[ \xymatrix{ \pi\ar[dr]_\e \ar[rr]^-{f}&& \ll A,t\,|\, \varphi(B)=tBt^{-1}\rr\ar[dl]^\psi\\
&\Z&}\]
where $\psi$ denotes the canonical epimorphism.
\item Two splittings  $(A, B, t, \varphi)$ and $(A', B', t', \varphi')$ of $(\pi,\e)$  are called \emph{weakly equivalent} if there exists an automorphism $\Phi$ of $\pi$ with  $\Phi(B)=B'$. If $\Phi$ can be chosen to be an inner automorphism of $\pi$, then the two HNN decompositions are said to be \emph{strongly equivalent}.
 \en
We conclude this section with the following well-known lemma of \cite{BS78}. It appears as Theorem B* in \cite{Str84} where an elementary proof can be found. 

\begin{lemma}\label{lem:splitexists}
Let $\pi$ be a finitely presented group and let $\e\in \hom(\pi,\Z)$ be an epimorphism. Then there exists a splitting
\[ f:\pi\xrightarrow{\cong} \ll A,t\,|\, \varphi(B)=tBt^{-1}\rr\]
of $(\pi,\e)$ where $A$ and $B$ are finitely generated.
\end{lemma}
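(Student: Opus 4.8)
The plan is to produce the splitting directly from a finite presentation of $\pi$ chosen to be compatible with $\e$. First I would record a convenient normal form for such a presentation: since $\e$ is surjective there is some $t\in\pi$ with $\e(t)=1$, and a standard change-of-generators argument (adjoin $t$ and replace each generator $x$ of a given finite presentation by $x\,t^{-\e(x)}\in\ker(\e)$) shows that $\pi$ admits a finite presentation
\[\pi=\ll t,y_1,\dots,y_p \mid R_1,\dots,R_q\rr\qquad\text{with }\ \e(t)=1,\ \e(y_1)=\dots=\e(y_p)=0.\]

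Next, for $i\in\Z$ set $y_{j,i}:=t^i y_j t^{-i}\in\ker(\e)$. Each relator $R_k$ has total $t$-exponent $0$, so the usual collecting process (inserting cancelling pairs $t^{-s}t^{s}$) rewrites it, inside $\pi$, as a word $W_k$ in the letters $y_{j,i}^{\pm1}$; conjugating a relator by a power of $t$ shifts all the indices occurring in the corresponding $W_k$, so after replacing each $R_k$ by a suitable conjugate $t^{c_k}R_k t^{-c_k}$ we may assume that every $W_k$ involves only $y_{j,i}$ with $0\le i\le M$, for one fixed integer $M$. Now put
\[A:=\ll y_{j,i}\mid 1\le j\le p,\ 0\le i\le M\rr\le\pi,\qquad B:=\ll y_{j,i}\mid 1\le j\le p,\ 0\le i\le M-1\rr\le A,\]
and let $\varphi\co B\to A$ be conjugation by $t$. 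Then $\varphi$ is injective (it is the restriction of an inner automorphism of $\pi$), $\varphi(B)=tBt^{-1}=\ll y_{j,i}\mid 1\le i\le M\rr\le A$, and $A,B$ are finitely generated. Since $\e(t)=1$ and $\e|_A=0$, the canonical epimorphism of the tuple $(A,B,t,\varphi)$ equals $\e$; hence the whole content of the lemma is that the natural homomorphism $h$ from $G:=\ll A,t\mid \varphi(b)=tbt^{-1}\rr$ to $\pi$ (the inclusion $A\hookrightarrow\pi$ together with $t\mapsto t$) is an isomorphism.

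Surjectivity of $h$ is immediate, since $t$ and the $y_j=y_{j,0}$ generate $\pi$. For injectivity I would construct an explicit inverse $k\co\pi\to G$ from the presentation above, sending $t\mapsto t$ and $y_j\mapsto y_{j,0}\in A\subseteq G$. The defining HNN relations give $t\,y_{j,i}\,t^{-1}=y_{j,i+1}$ in $G$ for $0\le i\le M-1$, hence $t^i y_{j,0} t^{-i}=y_{j,i}$ in $G$ for all $0\le i\le M$; running the same collecting process inside $G$ therefore identifies the image of each $R_k$ with the image of $W_k$, which is a product of elements $y_{j,i}$ of $A$ representing the trivial element of $\pi$, hence trivial already in $A$ (as $A\hookrightarrow\pi$) and so trivial in $G$. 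Thus $k$ is well defined, and $h$ and $k$ are mutually inverse on generators, so $h$ is an isomorphism, completing the proof. This is the argument of Bieri--Strebel \cite{BS78} (see also \cite{Str84}). The only point that needs care is the bookkeeping in the collecting step — rewriting the finitely many relators as words in the conjugates $y_{j,i}$ and bounding the set of indices that appears — but this is completely elementary, so I do not anticipate a serious obstacle.
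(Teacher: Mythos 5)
Your argument is correct and is precisely the standard Bieri--Strebel construction: normalize the presentation so that all generators other than $t$ lie in $\ker(\e)$, rewrite the (conjugated) relators as words in the conjugates $y_{j,i}=t^iy_jt^{-i}$ with indices in a bounded window $[0,M]$, and take $A$ and $B$ to be the subgroups generated by the $y_{j,i}$ with $0\le i\le M$ and $0\le i\le M-1$ respectively, with $\varphi$ conjugation by $t$. Note that the paper does not itself prove this lemma but defers to \cite{BS78} and \cite{Str84}; your proposal correctly reconstructs the argument found there, and the verification that the map $h$ and the map $k$ built from the presentation are mutually inverse (including the index bookkeeping so that each conjugate $t^{s}y_{j,0}t^{-s}$ appearing in the collection only uses HNN relations with index at most $M-1$) goes through as you outline.
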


\subsection{Splittings of pairs $(\pi,\e)$ with finitely generated kernel}\label{section:fg}

The following lemma characterizes splittings of pairs $(\pi,\e)$
for which $\ker(\e)$ is finitely generated.

\begin{lemma}\label{lem:splitkerfg}
Let $\pi$ be a finitely presented group, $\e\colon \pi\to \Z$ an epimorphism, and $t$ an element of $\pi$ with $\e(t)=1$.
If $\ker(\e)$ is finitely generated, then  there exists a canonical isomorphism
\[ \pi=\ll B,t\,|\,\varphi(B)=tBt^{-1}\rr\]
where $B:=\ker(\e)$ and  where $\varphi\co B\to B$ is given by conjugation by $t$.
Furthermore, any other  splitting of $(\pi,\e)$
is strongly equivalent to this splitting.
\end{lemma}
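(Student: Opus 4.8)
The plan is to handle the two assertions in turn; the first is formal and the content lies in the second.

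\emph{Existence of the displayed decomposition.} I would start from the short exact sequence $1\to B\to\pi\xrightarrow{\e}\Z\to 1$ with $B=\ker(\e)$. Since $\Z$ is free, the chosen element $t$ with $\e(t)=1$ determines a group-theoretic section of $\e$, so $\pi\cong B\rtimes\Z$ with the generator $t$ of the $\Z$-factor acting on the normal subgroup $B$ by conjugation; write $\varphi\co B\to B$ for this automorphism. It then remains to recognise $B\rtimes_{\varphi}\Z$ as the group presented by $\ll B,t\,|\,\varphi(b)=tbt^{-1}\rr$: there is an obvious surjection from the presented group onto $\pi$, and it is injective because, $\varphi$ being an automorphism, every element of the presented group is of the form $bt^{n}$ with $b\in B$ and $n\in\Z$, and such an element maps to $1$ in $\pi$ only if $n=\e(bt^{n})=0$ and then $b=1$ (using that $B$ sits inside $\pi$). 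This yields an HNN decomposition $(B,B,t,\varphi)$ whose canonical epimorphism is visibly $\e$, i.e. a splitting of $(\pi,\e)$.

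\emph{Uniqueness up to strong equivalence.} Let $(A',B',t',\varphi')$ be an arbitrary splitting of $(\pi,\e)$. Since its canonical epimorphism equals $\e$, we have $A'\subseteq\ker(\e)=B$ and $\e(t')=1$. I claim it is enough to show $A'=B'$ and $\varphi'(B')=A'$. Indeed, these two equalities force $A'$ to be normal in $\pi$ and $\e$ to descend to an isomorphism $\pi/A'\xrightarrow{\cong}\Z$, whence $A'=\ker(\e)=B$ and so the amalgamating subgroup $B'$ is equal to $B$ as well; strong equivalence with the canonical splitting is then witnessed by the identity automorphism of $\pi$. To prove the claim I would use the standard description of $\ker(\e)$ coming from the HNN presentation: writing an element of $\ker(\e)$ as a word in $A'$ and ${t'}^{\pm 1}$ and using that its total exponent in $t'$ vanishes, one rewrites it as a product of conjugates ${t'}^{j}a{t'}^{-j}$ with $a\in A'$, so that $\ker(\e)=\bigcup_{n\ge 0}N_{n}$ is an ascending union, where $N_{n}:=\ll {t'}^{j}A'{t'}^{-j}\,:\,|j|\le n\rr$.

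By Bass--Serre theory (equivalently, by repeated use of Britton's lemma), $N_{n}$ is the fundamental group of the finite segment of groups $A'_{-n}-A'_{-n+1}-\cdots-A'_{n}$ obtained by unrolling the HNN loop: the vertex groups are the $A'_{j}={t'}^{j}A'{t'}^{-j}$, the edge groups are conjugates of $B'$, and along each edge one of the two attaching maps is a conjugate of the inclusion $B'\le A'$ while the other is a conjugate of $\varphi'\co B'\hookrightarrow A'$. If either $B'\subsetneq A'$ or $\varphi'(B')\subsetneq A'$, then adjoining one more vertex at the appropriate end of this segment attaches a new vertex group along a \emph{proper} subgroup of it, so $N_{n}\subsetneq N_{n+1}$ for every $n$. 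But then $\ker(\e)$ is a strictly increasing union of proper subgroups and therefore is not finitely generated, contrary to hypothesis. Hence $B'=A'$ and $\varphi'$ is onto, proving the claim.

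The one genuinely technical step is the identification in the last paragraph: checking that $N_{n}$ really is the iterated amalgam of the $A'_{j}$ over conjugates of $B'$, and that $N_{n}\hookrightarrow N_{n+1}$ is proper exactly when the newly attached edge group is proper in the new vertex group. This is routine Bass--Serre bookkeeping (and can alternatively be extracted directly from Britton's normal form for HNN extensions) once a consistent orientation convention for the defining edge is fixed. Everything else --- the semidirect-product splitting, the normal-form computation, and the elementary fact that an increasing union of proper subgroups is never finitely generated --- is immediate.
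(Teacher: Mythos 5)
Your proposal is correct and follows essentially the same route as the paper: the semidirect-product section $n\mapsto t^n$ for existence, and the description of $\ker(\e)$ as a doubly infinite chain of amalgams over conjugates of the amalgamating subgroup, with finite generation forcing the edge inclusions to be onto, for uniqueness. The only difference is cosmetic — where the paper cites Neuwirth for the step "finitely generated infinite amalgam forces $C_i=D_i=\psi(D_{i-1})$", you prove it directly via the strictly ascending union argument, and you spell out slightly more carefully why $A'=B'=\varphi'(B')$ forces $A'=\ker(\e)$ as a subgroup (not merely up to isomorphism), which is what makes the identity automorphism the witness for strong equivalence.
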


\begin{proof}
Let $\pi$ be a finitely presented group and let $\e\colon \pi\to \Z$ be an epimorphism
such that $B=\ker(\e)$ is finitely generated.  We have an exact sequence
\[ 1\to B\to \pi\xrightarrow{\e}\Z \to 0.\]
Let $t\in \pi$ with $\e(t)=1$. The map $n\mapsto t^n$ defines a right-inverse of $\e$, and we see that $B$ is canonically isomorphic to 
 the semi-direct product $\ll t\rr\ltimes B$ where $t^n$ acts on $B$ by conjugation by $t^n$. That is, we have a canonical isomorphism
\[ \pi=\ll B,t\,|\,\varphi(B)=tBt^{-1}\rr.\]

We now suppose that we have another splitting 
 $\pi=\ll C,s\,|\, \psi(D)=sDs^{-1}\rr$
 of $(\pi,\e)$. 
By our hypothesis the group $B=\ker(\e)$ is finitely generated.
On the  other hand, it follows from standard results in the theory of graphs of groups (see \cite{Se80}) that 
\[ \ker(\e)\cong  \cdots C_k*_{D_k} C_{k+1}*_{D_{k+1}} C_{k+2} \cdots,\]
where $C_i=C$ and $D_i=D$ for all $i\in \Z$ and each map $D_i\to C_{i+1}$ is given by $\psi$.

As in \cite{Ne65}, the fact that the infinite free product with amalgamation is finitely generated implies that $C_i=D_i=\psi(D_{i-1})$ for all $i\in \Z$. This, in turn, implies that each $C_i$ and $D_i$ is isomorphic to $D=\ker(\e)$. It is now clear that the identity on $\pi$ already has the desired property relating the two splittings of $(\pi,\e)$. 
\end{proof}

\subsection{Induced splittings of groups}\label{section:anm}

Let
\[ \pi=\ll A,t\,|\, \varphi(B)=tBt^{-1}\rr\]
be an HNN-extension. Given $n\leq m\in \N$ we denote by $A_{[n,m]}$ the result of amalgamating
the groups $t^{i}At^{-i}$, $i=n,\dots,m$ along the subgroups $t^{i}\varphi(B)t^{-i}=t^{i+1}Bt^{-i-1}$, $i=n,\dots,m-1$.
In our notation,
\[ A_{[n,m]}= \langle \ast_{i=m}^n t^i A t^{-i}\mid t^j\varphi(B)t^{-j} = t^{j+1}Bt^{-j-1}\ (j=n, \ldots, m-1) \rr.\]
Given any $k\leq m\leq n\leq l$, we have a canonical map $A_{[m,n]}\to A_{[k,l]}$
which is  a monomorphism (see, for example, \cite{Se80} for details). If $\e\co \pi\to \Z$ is the canonical
 epimorphism, then it is well known that
$\ker(\e)$ is given by the direct limit of the groups $A_{[-m,m]}$, $m\in \N$; that is,
\[ \ker(\e)=\lim_{m\to \infty}A_{[-m,m]}.\]
The following well-known lemma shows that a splitting of a pair $(\pi,\eps)$ gives rise to a sequence of splittings.

\begin{lemma}\label{lem:moresplittings}
Let
\[ \pi=\ll A,t\,|\, \varphi(B)=tBt^{-1}\rr\]
be an HNN-extension.
For any integer $n\ge 0$, let
 \[ \varphi_n: \pi_1(A_{[0,n]})\to A_{[1,n+1]} \]
be the map that is given by conjugation by $t$.
Then the obvious  inclusion maps induce an isomorphism
\[\ll A_{[0,n+1]},t\,|\, \varphi_n(A_{[0,n]})=tA_{[0,n]}t^{-1}\rr\xrightarrow{\i} \pi=\ll A,t\,|\, \varphi(B)=tBt^{-1}\rr.\]
\end{lemma}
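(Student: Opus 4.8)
The plan is to verify directly that the stated HNN-extension over the larger base group $A_{[0,n+1]}$ presents the same group $\pi$, by exhibiting mutually inverse homomorphisms. First I would construct a map from $\ll A_{[0,n+1]},t\,|\, \varphi_n(A_{[0,n]})=tA_{[0,n]}t^{-1}\rr$ to $\pi$. On $A_{[0,n+1]}$ this is just the canonical monomorphism $A_{[0,n+1]}\to \pi$ coming from the fact (recalled in Section \ref{section:anm}) that each $A_{[m,n]}$ embeds in $\pi$; concretely $A_{[0,n+1]}$ is generated inside $\pi$ by $\bigcup_{i=0}^{n+1}t^iAt^{-i}$. Sending the new stable letter $t$ to $t\in\pi$ is consistent with the defining relations, because inside $\pi$ conjugation by $t$ carries $t^iAt^{-i}$ to $t^{i+1}At^{-i-1}$, hence carries $A_{[0,n]}$ to $A_{[1,n+1]}$, and this is exactly the map $\varphi_n$. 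So the HNN relations $\varphi_n(x)=txt^{-1}$ hold in $\pi$, giving a well-defined homomorphism $\i$; it is surjective since its image contains $A=t^0At^0$ and $t$, which generate $\pi$.

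Next I would build the inverse. The group $\pi=\ll A,t\,|\, \varphi(B)=tBt^{-1}\rr$ is generated by $A$ and $t$, so I define a homomorphism $\pi\to \ll A_{[0,n+1]},t\,|\,\varphi_n(A_{[0,n]})=tA_{[0,n]}t^{-1}\rr$ by sending $A$ into $A_{[0,n+1]}$ (via the canonical inclusion $A=t^0At^0\hookrightarrow A_{[0,n+1]}$) and sending $t$ to the stable letter $t$. To see this respects the relations of $\pi$, I must check $\varphi(b)=tbt^{-1}$ for $b\in B$: here $b\in B\subset A$ and $\varphi(b)\in A$, and the relation to verify is an equation in $A_{[0,n+1]}$ after applying the stable-letter conjugation. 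Since $B\subset A=t^0At^0\subset A_{[0,n]}$ and $\varphi_n$ restricted to $B$ agrees with $\varphi$ (both are conjugation by $t$ in $\pi$, landing in $t^1At^{-1}\subset A_{[1,n+1]}\subset A_{[0,n+1]}$), the relation holds. Hence the map is well-defined. Finally I check the two homomorphisms are mutually inverse: both composites fix the generating set $\{$image of $A$, $t\}$, so each composite is the identity. This proves $\i$ is an isomorphism.

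The one point requiring care — and the main obstacle — is the bookkeeping around the subgroups $A_{[m,n]}$: one must be precise that $A_{[0,n]}$ is the subgroup of $A_{[0,n+1]}$ (equivalently of $\pi$) generated by $\bigcup_{i=0}^{n}t^iAt^{-i}$, that the canonical map $A_{[0,n]}\to A_{[0,n+1]}$ is injective, that conjugation by $t$ inside $\pi$ restricts to an isomorphism $A_{[0,n]}\xrightarrow{\cong}A_{[1,n+1]}$ equal to $\varphi_n$, and that $A_{[1,n+1]}$ sits inside $A_{[0,n+1]}$. All of these are instances of the standard structure theory of graphs of groups (the line graph with vertex groups $t^iAt^{-i}$ and edge groups $t^{i+1}Bt^{-i-1}$), for which one may cite \cite{Se80}. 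Once these identifications are made explicit, the verification of relations and the mutual-inverse check are routine and need not be spelled out in full. I would therefore structure the write-up as: (i) recall the subgroup identifications inside $\pi$; (ii) define $\i$ and check it is a well-defined epimorphism; (iii) define the candidate inverse and check it is well-defined; (iv) observe both composites are the identity on generators.
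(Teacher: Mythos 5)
Your proof is correct, but it takes a genuinely different route from the paper. The paper's argument restricts to the kernels $\G'$ and $\pi'$ of the two canonical epimorphisms to $\Z$, writes both as the fundamental groups of infinite line-graphs of groups with vertex groups $A_i = t^iAt^{-i}$ and edge groups $B_i$, and observes that $\i$ identifies the two infinite amalgamated products canonically. Your approach is to bypass the kernels entirely and exhibit mutually inverse homomorphisms between the two HNN-extensions directly, checking relations. Yours is more elementary and avoids invoking the description of $\ker(\e)$ as a direct limit, at the cost of the bookkeeping you flag; the paper's approach is shorter once one accepts the structure theory from Serre.

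One place where your write-up is thinner than it should be: at the final step you say "both composites fix the generating set $\{\text{image of }A, t\}$, so each composite is the identity." That is immediate for the composite $\pi\to\G\to\pi$, since $A\cup\{t\}$ visibly generates $\pi$. But to conclude the composite $\G\to\pi\to\G$ is the identity you need to know that the image of $A$ together with $t$ generates $\G$, and this is not part of the HNN presentation a priori — the base group is $A_{[0,n+1]}$, not $A$. You should note that it does follow from the HNN relations: for $x\in t^iAt^{-i}\subset A_{[0,n]}$ the relation $\varphi_n(x)=txt^{-1}$ gives $t^{i+1}At^{-i-1}= t(t^iAt^{-i})t^{-1}$ inside $\G$, so by induction every vertex group $t^iAt^{-i}$ of $A_{[0,n+1]}$, and hence $A_{[0,n+1]}$ itself, lies in $\ll A,t\rr\subset\G$. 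Once that line is added, the mutual-inverse check is complete. Equivalently, you can verify directly that $\psi\circ\i$ restricted to $A_{[0,n+1]}$ agrees with the inclusion, by checking it on the generators $t^iat^{-i}$ ($a\in A$) using the same inductive identity. Either way, this is an omission worth filling rather than a wrong step.
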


\begin{proof}
We write
\[ \G=\ll A_{[0,n+1]},t\,|\, \varphi_n(A_{[0,n]})=tA_{[0,n]}t^{-1}\rr.\]
We denote by $\pi'$ (respectively $\G'$) the kernel of the canonical map from $\pi$ (respectively $\G$) to $\Z$.
It is clear that it suffices to show that the restriction of $\i:\G\to \pi$ to $\pi'\to \G'$ is an isomorphism.

For $i\in \Z$, we  write $A_i:=t^iAt^{-i}$ and $B_i:=\varphi(t^{i+1}Bt^{-i-1})$.
Note that $\G'$ is canonically isomorphic to
\[\cdots  \left(A_0*_{B_0}\cdots*_{B_{n}}A_{n+1}\right)*_{A_1*_{B_1}\cdots*_{B_{n}}A_{n+1}}    \left(A_1*_{B_1}\cdots*_{B_{n+1}}A_{n+2}\right)
*_{A_2*_{B_2}\cdots*_{B_{n+1}}A_{n+2}}
 \cdots ,\]
 and $\pi'$ is canonically isomorphic to
 \[   \cdots *_{B_0}A_{-1}*_{B_{-1}}A_{0}*_{B_0}A_1*_{B_1}*\cdots \]
It is now straightforward to see that $\i$ does indeed restrict to an isomorphism $\G'\to \pi'$.
\end{proof}

Note that the isomorphism in Lemma \ref{lem:moresplittings} is canonical.
Throughout the paper we will therefore make the identification
\[\pi=\ll A_{[0,n+1]},t\,|\, \varphi_n(A_{[0,n]})=tA_{[0,n]}t^{-1}\rr.\]
In the paper we will also write $A=A_{[0,0]}$.

\section{Splittings of knot groups and incompressible surfaces}\label{section:splitk}
Now let  $K\subset S^3$ be a knot, that is, an oriented embedded simple closed curve in $S^3$.
We write $X(K):=S^3\sm \nu K$ and
\[ \pi(K):=\pi_1(X(K))=\pi_1(S^3\sm \nu K).\]
The orientation of $K$ gives rise to a canonical epimorphism $\e_K\co \pi(K)\to \Z$  sending the oriented meridian to 1.

Let  $\Sigma$ be a Seifert surface of genus $g$ for $K$; that is, a connected, orientable, properly embedded surface $\Sigma$ of genus $g$
in $X(K)$ such that $\partial \Sigma$ is an oriented longitude for $K$.
Note that $\Sigma$ is dual to the canonical epimorphism $\e$.

Suppose that $\Sigma$ is incompressible.
(Recall that a surface $\Sigma$ in a $3$-manifold $N$ is called \emph{incompressible} if the inclusion-induced map $\pi_1(\Sigma)\to \pi_1(N)$ is injective.)
We  pick a tubular neighborhood $\S\times [-1,1]$. The manifold
$X(K)\sm  \S\times (-1,1))$ is the result of \emph{cutting
along $\S$}. The Seifert--van Kampen theorem gives us
a  splitting
\[ \pi_1(X(K))= \ll \pi_1(X(K)\sm  \S\times (-1,1)),t\,\,|\,\, \varphi(\pi_1(\S\times -1)=t\pi_1(\S\times 1)t^{-1}\rr\]
of $(\pi(K),\e_K)$, where $\varphi$ is induced by the canonical homeomorphism $\S\times -1\to \S\times 1$.
We thus see that  $\pi(K)$ splits over the free group $\pi_1(\Sigma)$ of rank $2g$.

Given a knot $K\subset S^3$, we denote by $g=g(K)$ the minimal genus of a Seifert surface. It follows from the Loop Theorem
(see, for example, \cite[Chapter~4]{He76})  that
 a Seifert surface of minimal genus is incompressible. Hence $\pi(K)$ splits over a free group of rank $2g(K)$.
 \medskip
 
If two incompressible Seifert surfaces of a knot $K$ are isotopic, then it is clear that the corresponding splittings of $\pi(K)$ are strongly equivalent.
There are many examples of knots that admit non-isotopic minimal genus Seifert surfaces; see e.g.  \cite{Ly74b,Ei77a,Ei77b,Al12,HJS13}.
We expect that these surfaces give rise to splittings that are not strongly equivalent.
\medskip
 
On the other hand, if a knot is fibered, then it admits a unique minimal genus Seifert surface up to isotopy (see e.g. \cite[Lemma~5.1]{EL83}). It is therefore perhaps not entirely surprising that $\pi(K)$ admits a unique splitting up to strong equivalence.
More precisely, we have the following well-known
 lemma, which is originally due to Neuwirth \cite{Ne65}.
 
\begin{lemma} \label{lem:fibsplit}
Let $K$ be a fibered knot of genus $g$ with fiber $\Sigma$. Then any splitting of $\pi(K)$ is strongly equivalent to 
\[ \ll \pi_1(X(K)\sm  \S\times (-1,1)),t\,|\, \varphi(\pi_1(\S\times -1)=t\pi_1(\S\times 1) t^{-1}\rr.\]
In particular $\pi(K)$ only splits over the  free group of rank $2g$.
\end{lemma}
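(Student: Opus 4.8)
The plan is to deduce the lemma from Lemma~\ref{lem:splitkerfg}, using the fact that fiberedness of $K$ is exactly the statement that $\ker(\e_K)$ is finitely generated.

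First I would record the two inputs coming from the fiber. Since $X(K)$ fibers over $S^1$ with fiber $\Sigma$ and monodromy $\varphi$, the long exact homotopy sequence of this fibration --- together with $\pi_i(S^1)=0$ for $i\ge 2$ and the fact that the surface-with-boundary $\Sigma$ is aspherical with $\pi_1(\Sigma)\cong F_{2g}$ --- gives a short exact sequence
\[ 1\longrightarrow \pi_1(\Sigma)\longrightarrow \pi(K)\xrightarrow{\e_K}\Z\longrightarrow 0,\]
so that $\ker(\e_K)\cong\pi_1(\Sigma)$ is free of rank $2g$, in particular finitely generated. At the same time, the splitting of $(\pi(K),\e_K)$ obtained by cutting $X(K)$ along $\Sigma$ --- the one displayed in the statement --- has base group $\pi_1(\Sigma\times[-1,1])\cong\pi_1(\Sigma)$ and amalgamating subgroup $\pi_1(\Sigma)=\ker(\e_K)$, with $\varphi$ equal to conjugation by $t$; that is, it is literally the canonical splitting furnished by Lemma~\ref{lem:splitkerfg} for $B=\ker(\e_K)$.

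Next I would handle an arbitrary splitting $(A,B,t,\varphi)$ of $\pi(K)$. Its canonical epimorphism $\e\co\pi(K)\to\Z$ is surjective, and since $H_1(\pi(K))\cong\Z$ is generated by the class of the meridian, every epimorphism $\pi(K)\to\Z$ equals $\pm\e_K$; hence $\e=\pm\e_K$. In the case $\e=-\e_K$ I would rewrite the very same HNN decomposition with stable letter $t^{-1}$ and amalgamating monomorphism $\varphi^{-1}\co\varphi(B)\to A$ --- the base group is unchanged and the new amalgamating subgroup $\varphi(B)$ is isomorphic to $B$ --- so that its canonical epimorphism becomes $\e_K$. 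Thus, up to this harmless normalization, $(A,B,t,\varphi)$ is a splitting of the pair $(\pi(K),\e_K)$.

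Now Lemma~\ref{lem:splitkerfg} applies verbatim: because $\ker(\e_K)$ is finitely generated, every splitting of $(\pi(K),\e_K)$ is strongly equivalent to the canonical one with amalgamating subgroup $\ker(\e_K)=\pi_1(\Sigma)$, which by the second paragraph is exactly the splitting coming from the fiber. Finally, strong equivalence is witnessed by an (inner) automorphism $\Phi$ of $\pi(K)$ with $\Phi(B)=B'$, so the amalgamating subgroup of any splitting of $\pi(K)$ is isomorphic to $\pi_1(\Sigma)\cong F_{2g}$, which gives the last assertion. I expect the only point requiring genuine care to be the bookkeeping of the third paragraph --- normalizing the canonical epimorphism and verifying that the ``fiber splitting'' is precisely the splitting produced by Lemma~\ref{lem:splitkerfg} --- since all the substantive content is absorbed into that lemma.
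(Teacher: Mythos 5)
Your proposal is correct and follows essentially the same route as the paper: both deduce the lemma from Lemma~\ref{lem:splitkerfg} via the observation that the fibration forces $\ker(\e_K)\cong\pi_1(\Sigma)$ to be finitely generated. You simply spell out two details the paper leaves implicit (that every epimorphism $\pi(K)\to\Z$ is $\pm\e_K$, and that the fiber splitting is the canonical one of Lemma~\ref{lem:splitkerfg}), which is fine.
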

 
\begin{proof}
If $\S$ is a fiber surface for $X(K)$, then the infinite cyclic cover of $X(K)$ is diffeomorphic to $\S\times \R$. Put differently, $\ker(\e_K)\cong \pi_1(\S)$
which implies in particular that $\ker(\e_K)$ is finitely generated.
The lemma is now a straightforward consequence of Lemma \ref{lem:splitkerfg}.
\end{proof}
 
\section{Splitting of the knot group for $K=5_2$}\label{section:52}
In this section we give several explicit splittings of the knot group $\pi(K)$ where   $K=5_2$, the first non-fibered knot in the Alexander-Briggs table. 
We construct:
\bn
\item three splittings of $\pi(5_2)$ over the free group $F_2$, no two being weakly equivalent;
\item a splitting of $\pi(5_2)$ over the free group $F_3$ on three generators;
\item a splitting of $\pi(5_2)$ over a non-free group.
\en
Note that neither the second nor the third  splitting is induced by an incompressible surface. We will also see that at least two of the three splittings over $F_2$ are not  induced by an incompressible surface.

Since $K$ is a knot of genus one, a minimal-genus Seifert surface gives rise to a splitting of $\pi(K)$ over a free group of rank 2. In the following we will consider an explicit splitting that comes from a Wirtinger presentation of the knot group:
\[ \pi(K) =\ll  a, b,t\,|\, tat^{-1}=b,\, tb^{-1}ab^{-1}t^{-1}=(b^{-1}a)^2\rr.\]
Here the knot group has an HNN decomposition $(A, B, t, \varphi)$,
where $A$ is the free group on $a, b$ while
$B$ is the subgroup freely generated by $a$ and $b^{-1}ab^{-1}$. The isomorphism $\varphi$ sends $a \mapsto b$ and $b^{-1}ab^{-1}\mapsto (b^{-1}a)^2$.
For the remainder of this section we identify $\pi(K)$ with $\ll A,t\,|\, \varphi(B)=tBt^{-1}\rr$.

\begin{proposition}\label{prop:52splittings}
Consider the splittings:
\[ \ba{rcl} \pi(K)& =& \ll A,t\,|\, \varphi(B)=t Bt^{-1}\rr, \\
 \pi(K)&=&\ll A_{[0,1]},t\,|\, \varphi_1(A)=tAt^{-1}\rr,\\
 \pi(K)&=&\ll A_{[0,2]},t\,|\, \varphi_2(A_{[0,1]})=tA_{[0,1]}t^{-1}\rr\ea \]
where the latter two splittings are provided by Lemma \ref{lem:moresplittings}.
Then the following hold.
\begin{enumerate}[(i)]
\item Each is a splitting over a free group of rank two.
\item No two of the splittings of $(\pi(K),\e_K)$ are weakly equivalent.
\item At least two of the splittings are not induced by an incompressible Seifert surface.
\end{enumerate}
\end{proposition}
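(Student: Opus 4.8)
The plan is to attack the three parts separately, using the explicit HNN data for $K=5_2$ together with the formalism of Section \ref{section:anm}.

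For part (i), I would argue directly from the structure of the groups $A_{[0,n]}$. Here $A$ is free of rank $2$ and $B\le A$ is free of rank $2$; moreover $\varphi(B)$ is free of rank $2$ and I would first check that $[A:B]=\infty$ and $[A:\varphi(B)]=\infty$ (both are rank-$2$ subgroups of a rank-$2$ free group of infinite index, which one sees by examining the generators $a, b^{-1}ab^{-1}$ and $b, (b^{-1}a)^2$ abelianized or via a covering-space/Stallings-foldings computation). Then $A_{[0,1]}=A_0*_{B_0}A_1$ is an amalgam of two rank-$2$ free groups over a common rank-$2$ free subgroup; since the amalgamating subgroup has infinite index on both sides, this is not finitely generated unless\,\dots\ — no, wait: it \emph{is} finitely generated (generated by the generators of $A_0$ and $A_1$), and by the Euler-characteristic/grushko-type count, or more simply because an amalgam of free groups over a free group is free (as it acts freely on the Bass--Serre tree, hence on a tree, so is free), $A_{[0,1]}$ is free; its rank is computed from $\chi$: $\chi(A_{[0,1]})=\chi(A_0)+\chi(A_1)-\chi(B_0)=(-1)+(-1)-(-1)=-1$, so $A_{[0,1]}$ is free of rank $2$. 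The same computation gives $A_{[0,n]}$ free of rank $2$ for every $n$. Hence each of the three displayed splittings is over a free group of rank two, proving (i).

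For part (ii), I would use weak-equivalence invariants of the amalgamating subgroup $B$ sitting inside $\pi(K)$. The three amalgamating subgroups are $B_{[0]}:=B$, $B_{[0,1]}:=\varphi_1(A_{[0,1]})$, and $B_{[0,2]}:=\varphi_2(A_{[0,2]})$, i.e.\ (conjugates of) $A_{[0,1]}$ and $A_{[0,2]}$ sitting inside $\pi(K)$ via the identifications from Lemma \ref{lem:moresplittings}. A natural invariant under an automorphism $\Phi$ of $\pi(K)$ is the pair $\bigl(\e_K(\text{stuff}),\,[\pi(K):B]\bigr)$ — but all these have infinite index, so that is not enough. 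The right invariant is the \emph{number of ends} of the pair, or better, the conjugacy/automorphism class detected by how $B$ meets $\ker(\e_K)$: concretely, $B_{[0]}\subset \ker(\e_K)$ whereas $B_{[0,1]}$ and $B_{[0,2]}$ do not lie in $\ker(\e_K)$ in the first presentation's normalization — so I would instead pass to the normalized picture where all three are compared as subgroups of a fixed copy of $\pi(K)$ and use the intersections $B\cap \ker(\e_K)$, which are $\ker(\e_K)$-subgroups of different "width". The cleanest approach: the subgroup $A_{[0,n]}\cap \ker(\e_K)$ is the amalgam $A_0*_{B_0}\cdots *_{B_{n-1}}A_n$ inside $\ker(\e_K)=\lim A_{[-m,m]}$, and an automorphism $\Phi$ with $\Phi(B)=B'$ restricts (after adjusting by conjugation so that $\Phi$ preserves $\ker(\e_K)$, which every automorphism does up to sign since $\ker(\e_K)=[\pi,\pi]\cdot(\text{torsion-free part})$ is characteristic — actually $\ker(\e_K)$ is characteristic because $\e_K$ is, up to sign, the abelianization map to $\Z$) to an automorphism of $\ker(\e_K)$ carrying one such finite amalgam-interval onto another. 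These intervals are distinguished by, e.g., the first homology of the corresponding finite cover of $X(K)$, or most simply by a direct computation that $H_1$ of $A_{[0,1]}$-as-subgroup and $H_1$ of $B$-as-subgroup have different images under the Alexander-module structure. I would carry out the explicit $\zt$-module computation using the given relators $tat^{-1}=b$, $tb^{-1}ab^{-1}t^{-1}=(b^{-1}a)^2$ to see the three subgroups are pairwise non-equivalent.

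For part (iii), I would count: there are three splittings over $F_2$, but by Section \ref{section:splitk} an incompressible Seifert surface $\Sigma$ of genus one (and $K=5_2$ has genus one) gives a splitting over $F_2=\pi_1(\Sigma)$, and $5_2$ is known to have a \emph{unique} minimal-genus Seifert surface up to isotopy — indeed it is a two-bridge knot and its genus-one Seifert surface is unique (this is classical; cf.\ the references \cite{Ly74b} and the fact that $5_2$ is alternating with Alexander polynomial $2t^2-3t+2$, fibered iff monic, hence non-fibered, yet still with unique Seifert surface by work on two-bridge knots). Therefore at most one of the three weakly-inequivalent splittings over $F_2$ can come from an incompressible Seifert surface, so at least two do not. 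The main obstacle will be part (ii): producing a genuinely computable invariant that separates all three amalgamating subgroups up to automorphism of $\pi(K)$ — I expect this to require an honest, somewhat delicate Alexander-module or covering-space computation rather than a soft argument, and care will be needed because weak equivalence allows an \emph{arbitrary} automorphism of $\pi(K)$, so the invariant used must be automorphism-invariant, not merely isotopy-invariant.
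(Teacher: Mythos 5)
Your proposal has two genuine gaps, one in part (i) and one in part (ii). In (i), the key assertion that ``an amalgam of free groups over a free group is free (as it acts freely on the Bass--Serre tree)'' is false: the action of an amalgam on its Bass--Serre tree has vertex stabilizers conjugate to the vertex groups, so it is free only when those are trivial. The general conclusion is also false, and this very paper is a counterexample: $A_{[0,2]}$ is an amalgam of three rank-two free groups over rank-two free subgroups, yet Section \ref{section:52} shows it is \emph{not} free (it is $\ll e,f,h \mid e^{-2}h^2=f^3\rr$, a non-free amalgam over $\Z$). Your Euler-characteristic count only computes what the rank would be \emph{if} the group were free. What actually has to be proved is that the specific group $A_{[0,1]}$ is free of rank two, and the paper does this by an explicit change of basis in the one-relator presentation $\ll a_0,a_1,b_1 \mid (a_1^{-1}a_0)^2b_1a_1^{-1}b_1\rr$, exhibiting the relator as a basis element of the ambient free group. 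Some such concrete computation is unavoidable; your argument would ``prove'' that $A_{[0,2]}$ is free too, contradicting the paper. (Also note the amalgamating subgroup of the third splitting is $\varphi_2(A_{[0,1]})$, not $\varphi_2(A_{[0,2]})$.)

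In (ii) you never land on a working invariant: all three amalgamating subgroups $B$, $A$, $A_{[0,1]}$ are free of rank two, so $H_1$, ends, index, and the like cannot separate them, and your proposed Alexander-module computation is left as a hope rather than an argument (moreover your intermediate claim that $A_{[0,1]}$, viewed in $\pi(K)$, does not lie in $\ker(\e_K)$ is wrong --- all three subgroups lie in $\ker(\e_K)$). The paper's route is quite different and is the idea you are missing: since $5_2$ is not fibered, Stallings's theorem gives that $\ker(\e_K)=\lim A_{[-k,k]}$ is not finitely generated, whence the inclusions $B\varsubsetneq A\varsubsetneq A_{[0,1]}$ are \emph{proper}; then Lemma \ref{lem:propersubgroup} (finiteness of $\mathrm{Out}$ by Mostow rigidity for the hyperbolic manifold $X(5_2)$, combined with Button's theorem that a subgroup conjugate to a subgroup of itself equals it) shows no automorphism of $\pi(K)$ can carry one of these subgroups onto a proper sub- or supergroup of another. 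Your part (iii) is essentially the paper's argument (uniqueness of the minimal-genus Seifert surface for $5_2$ plus the pigeonhole count), but it of course depends on repairing (ii).
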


In the proof of Proposition \ref{prop:52splittings} we will make use of the following lemma which is perhaps also of independent interest.

\begin{lemma}\label{lem:propersubgroup}
Let $M$ be a hyperbolic $3$-manifold with empty or toroidal boundary, and let $G$ be a  subgroup of $\pi:=\pi_1(M)$.
If $f\co \pi\to \pi$ is an automorphism with $f(G)\subset G$, then $f(G)=G$.
\end{lemma}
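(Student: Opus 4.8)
The plan is to exploit the fact that a hyperbolic $3$-manifold $M$ with empty or toroidal boundary has finite volume, so $\pi=\pi_1(M)$ is a finitely generated (indeed finitely presented) group with trivial center, and more importantly that $\pi$ is a lattice in $\mathrm{PSL}_2(\mathbb{C})$ by Mostow--Prasad rigidity. Any automorphism $f$ of $\pi$ is then induced by an isometry of $M$, hence is ``geometric.'' The key point I want to extract is that $f$ preserves the set of conjugacy classes of peripheral subgroups and, crucially, that $f$ preserves the \emph{volume} or, in the closed case, the structure well enough that $f(G)\subset G$ forces equality. So the first step is to reduce to a counting/measure-theoretic statement: if $f(G)\subsetneq G$ were a proper inclusion, then iterating gives a strictly descending chain $G\supsetneq f(G)\supsetneq f^2(G)\supsetneq\cdots$, and I want to derive a contradiction from the existence of such an infinite properly descending chain of subgroups all isomorphic to $G$ via the restrictions of $f$.

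The cleanest approach avoids delicate geometry by using \emph{covolume} or, when $G$ has infinite index, a different invariant; so I would split into cases. If $G$ has finite index in $\pi$, then $[\pi:f(G)]=[\pi:G]$ because $f$ is an automorphism, while $f(G)\subset G$ gives $[\pi:f(G)]=[\pi:G]\cdot[G:f(G)]$; hence $[G:f(G)]=1$, i.e. $f(G)=G$, and we are done with no hyperbolic geometry at all. If $G$ has infinite index, then $G$ corresponds to an infinite-sheeted cover $\widetilde M\to M$, and I would use the fact that the automorphism $f$ is realized by a self-homeomorphism $h$ of $M$ (Mostow rigidity plus the solution of the Borel conjecture for these manifolds, or just the statement that $\mathrm{Out}(\pi)$ is finite and every automorphism is induced by an isometry up to inner automorphisms). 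Composing with an inner automorphism does not change whether $f(G)\subset G$ implies $f(G)=G$ up to replacing $G$ by a conjugate, so I may assume $f$ is induced by an isometry $h\co M\to M$. Then $h$ lifts to the cover $\widetilde M_G$ associated to $G$ exactly because $h_*(G)=f(G)\subseteq G$, giving a map $\widetilde h\co \widetilde M_G\to \widetilde M_G$ covering $h$, and the covering group of $\widetilde M_G\to M$ pushes forward under $h$ to itself; the condition $f(G)\subsetneq G$ would mean $\widetilde h$ is a covering map of degree $[G:f(G)]>1$ from $\widetilde M_G$ to itself covering a \emph{homeomorphism} of the base, which is impossible because such a $\widetilde h$ would have to be a homeomorphism (it covers an isometry and is a local isometry, hence a bijective local homeomorphism onto a connected space, so it is a homeomorphism) — contradicting degree $>1$.

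The main obstacle I anticipate is making the infinite-index case rigorous without circularity: one must be careful that ``$f$ is induced by an isometry'' is being invoked correctly (Mostow--Prasad rigidity gives that $\mathrm{Out}(\pi_1(M))$ is isomorphic to the finite isometry group of $M$, so every automorphism of $\pi$ agrees, after composing with an inner automorphism, with the one induced by an isometry — and inner automorphisms visibly satisfy the conclusion of the lemma after replacing $G$ by a conjugate, which is harmless). The other subtle point is the claim that a local isometry $\widetilde h\co \widetilde M_G\to \widetilde M_G$ lifting an isometry of the base must be a covering map and in fact a homeomorphism: this follows because $\widetilde M_G$ is a complete hyperbolic manifold (the cover of a complete one) and a local isometry between complete connected Riemannian manifolds of the same dimension is automatically a covering map; it has a well-defined degree equal to $[G:f(G)]$, and I must argue this degree is $1$. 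Here the cleanest argument is that $\widetilde h$ is \emph{injective}: if $\widetilde h(x)=\widetilde h(y)$ then $x,y$ lie in the same fiber of $\widetilde M_G\to M$, so $y=\gamma\cdot x$ for some $\gamma$ in the deck group $\pi/\!\!/G$-action — more precisely, identifying $\widetilde M_G$ with $\mathbb{H}^3/G$, the map $\widetilde h$ is the descent of the lift $\tilde h\co \mathbb{H}^3\to\mathbb{H}^3$ of $h$, which is an isometry of $\mathbb{H}^3$, and it descends because $\tilde h\, G\, \tilde h^{-1} = f(G)\subseteq G$; it is injective on $\mathbb{H}^3/G$ precisely when $\tilde h\,G\,\tilde h^{-1}\supseteq G$ as well, i.e. when $f(G)=G$. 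Thus injectivity of $\widetilde h$ is equivalent to the conclusion, so instead I argue surjectivity fails unless $f(G)=G$: $\widetilde h$ being a nontrivial covering of $\mathbb{H}^3/G$ by itself would give an infinite strictly descending chain of isomorphic finite-covolume-or-infinite-volume pieces — the genuinely clean finish is to note that in the infinite-index case $\widetilde M_G$ has infinitely generated $\pi_1$ only in the fibered situation, but $G$ here is arbitrary. I therefore expect to fall back on the following uniform argument covering both cases at once: the map $\widetilde h$ on $\mathbb{H}^3/G$ induces on $\pi_1$ the map $\gamma\mapsto \tilde h\gamma\tilde h^{-1}=f(\gamma)$, an \emph{isomorphism} of $G$ onto $f(G)\subseteq G$; a covering map whose induced map on $\pi_1$ is an isomorphism onto its image of index $d$ has $d=1$ (since $\pi_1$ of the total space injects with index $d$), so $f(G)=G$. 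This last remark is the crux and replaces all the case analysis.
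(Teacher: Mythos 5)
Your finite-index observation is correct, and your reduction of the general case to the non-existence of a proper isometric self-covering of $\mathbb{H}^3/G$ is a legitimate reformulation, but the proposal has two genuine gaps and the second is fatal. First, you cannot "assume $f$ is induced by an isometry" by composing with an inner automorphism: if $f=c_x\circ f_0$ with $f_0$ geometric, then $f(G)=xf_0(G)x^{-1}\subset G$ yields no containment of the form $f_0(G')\subset G'$ for any conjugate $G'$ of $G$, and the lemma for inner automorphisms themselves --- that $xGx^{-1}\subset G$ forces $xGx^{-1}=G$ --- is not "visible": it fails in general groups (e.g. $t\langle a\rangle t^{-1}=\langle a^2\rangle\varsubsetneq\langle a\rangle$ in $\langle a,t\mid tat^{-1}=a^2\rangle$), and for hyperbolic $3$-manifold groups it is exactly the nontrivial input the paper imports from \cite[Theorem~4.1]{Bu07}. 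Second, your "crux" proves nothing: \emph{every} covering map induces an isomorphism of $\pi_1$ of the total space onto a subgroup of the base group whose index equals the degree, so observing that $\widetilde{h}_*$ is an isomorphism of $G$ onto $f(G)$ of index $d=[G:f(G)]$ merely restates the covering-space dictionary and does not force $d=1$. Aspherical manifolds can properly cover themselves ($S^1$, tori, and a priori the hypothetical $\mathbb{H}^3/G$); ruling this out is the content of the lemma, so the decisive step is circular. The authors' remark that they do not know whether the lemma holds for arbitrary $3$-manifolds is a warning sign that no soft covering-space argument can close this.

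For comparison, the paper's proof is short once two black boxes are granted: Mostow rigidity gives that $\operatorname{Out}(\pi)$ is finite, so $f^n$ is conjugation by some $x\in\pi$ for some $n\geq 1$, whence $f^n(G)=xGx^{-1}$; iterating $f(G)\subset G$ gives $xGx^{-1}=f^n(G)\subset G$; Button's theorem then yields $f^n(G)=G$; and the chain $G=f^n(G)\subset f^{n-1}(G)\subset\cdots\subset f(G)\subset G$ collapses to give $f(G)=G$. Your opening idea of iterating $f$ to get a descending chain is the right first move; what is missing is the use of finiteness of $\operatorname{Out}(\pi)$ to reduce to the conjugation case, together with a genuine substitute for Button's theorem at that point.
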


We do not know whether the conclusion of the lemma holds for any $3$-manifold.

\begin{proof}
Let $f\co \pi\to \pi$ be an automorphism with $f(G)\subset G$.
Since $M$  is hyperbolic, it is a consequence of the Mostow Rigidity Theorem that the group of outer automorphisms of $\pi$ is finite.
(See, for example, \ \cite[Theorem~C.5.6]{BP92} and \cite[p.~213]{Jo79} for details.)
Consequently, there exists a positive integer $n$ and an element $x\in \pi$ such that $f^n(G)=xGx^{-1}$.
It follows from   \cite[Theorem~4.1]{Bu07} that $f^n(G)=G$. The assumption that $f(G)\subset G$ implies inductively that $f^n(G)\subset f(G)$. Hence $f(G)=G$.
\end{proof}

We can now turn to the proof of Proposition \ref{prop:52splittings}.

\begin{proof}[Proof of Proposition \ref{prop:52splittings}]
It is clear that the first and the second splitting are over a free group of rank two.
It remains to show that $A_{[0,1]}$ is a free group of rank two.
First note that
\[A_{[0,1]}\cong \ll a_0, b_0, a_1, b_1\,|\, a_1=b_0,\, b_1^{-1} a_1b_1^{-1}=(b_0^{-1}a_0)^2\rr,\]
where $a_i$ and $b_i$ denote $t^iat^{-i}$ and $t^ibt^{-i}$, respectively. Using the first relation to eliminate the generator $b_0$, we obtain
$A_{[0,1]}\cong  \ll a_0, a_1, b_1\,|\, r \rr,$ where $r= (a_1^{-1}a_0)^2 b_1a_1^{-1} b_1.$
We let $c = a_1^{-1}a_0$ and $d = b_1a_1^{-1}$. Clearly $\{c, d, r\}$ is a basis for the free group on $a_0, a_1, b_1$. Hence $A_{[0,1]}\cong \ll c, d, r\, |\, r \rr \cong \ll c, d\, |\, \rr$ is indeed a free group of rank 2.
This concludes the proof of (i).

We turn to the proof of (ii).
Since $K$ is not fibered it follows 
from Stallings's theorem (see Theorem \ref{thm:st62})
that $\ker(\e_K)= \lim_{k\to \infty} A_{[-k,k]}$ is not finitely generated.
It follows that easily that for any $l\geq k$ the map $A_{[0,k]}\to A_{[0,l]}$ is a proper inclusion.
In particular, we have proper inclusions
$A \varsubsetneq A_{[0,1]} \varsubsetneq A_{[0,2]}$. Since $S^3\sm \nu K$ is hyperbolic, the desired statement now follows from
Lemma \ref{lem:propersubgroup}.

We prove (iii). It is well known (see, for example,\ \cite{Ka05}) that any two minimal-genus Seifert surfaces of $5_2$ are isotopic.
This implies, in particular, that any two splittings of $\pi(K)$ induced by  minimal-genus Seifert surfaces are strongly equivalent.
It follows from (ii) that at least two of the three splittings are not induced by a minimal genus Seifert surface.
\end{proof}

\begin{figure}[h]
\begin{center}
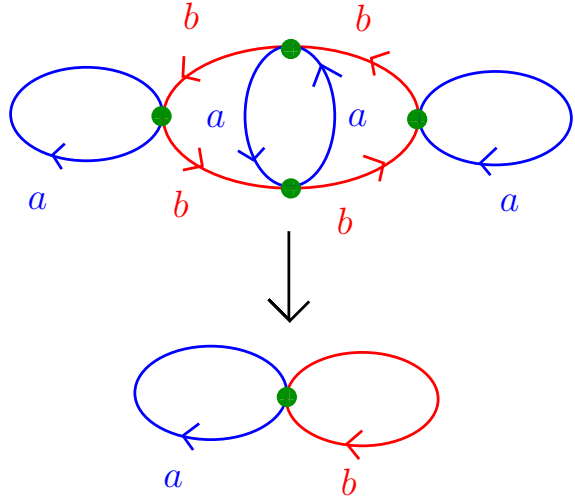
\caption{Covering graph.}\label{Graph}
\end{center}
\end{figure}

We show that $\pi(K)$ admits a splitting over a free group of rank 3.
In order to do so we note that there exists a canonical isomorphism
\be \label{equ1} \ba{rcl} &&\ll  a, b,t\,|\, tat^{-1}=b,\, tb^{-1}ab^{-1}t^{-1}=(b^{-1}a)^2\rr\\
&\cong&\ll a, b,c,t\,|\, tat^{-1}=b,\, tb^{-1}ab^{-1}t^{-1}=(b^{-1}a)^2, tb^{-2}ab^{-2}t^{-1}=c\rr.\ea \ee
Let  $A'$ be the free group generated by $a,b,c$. Let $B'$ be the subgroup of $A'$ generated by $a,b^{-1}ab^{-1},b^{-2}ab^{-2}$.
The fundamental group of the covering graph in Figure \ref{Graph} is free on $a, b^{-1}ab^{-1}, b^{-1}a^2b, b^{-2}ab^{-2}$, and $b^4$, and so $B'$ is a free rank-3 subgroup of $A'$. The elements $b, b^{-1}ab^{-1}a, c$ of $A'$ also generate a free group of rank 3, since they are free in the abelianization of $A'$.
There exists therefore a unique homomorphism  $\varphi':B'\to A'$ such that
$\varphi'(a)=b$, $\varphi'(b^{-1}ab^{-1})=b^{-1}ab^{-1}a$ and $\varphi'(b^{-2}ab^{-2})=c$.
It follows that $\varphi'$ is in fact a monomorphism.
Hence from (\ref{equ1}),
\[ \ll A',t\,|\, tB't^{-1}= \varphi(B')\rr\]
defines a splitting of $\pi(K)$ over the free group $B'$ of rank three.

Finally we give an explicit splitting of $\pi(K)$  over a subgroup that is not free.
Recall that by Lemma \ref{lem:moresplittings} the group $\pi(K)$ admits an HNN decomposition
with the HNN base $A_{[0,2]}$ defined as the amalgamated product of $A, tAt^{-1}$ and $t^2At^{-2}$.
It suffices to prove the following claim.

\begin{claim}
The group $A_{[0,2]}$ is not free.
\end{claim}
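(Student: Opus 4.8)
The plan is to show that $A_{[0,2]}$ contains a subgroup isomorphic to a non-free group — specifically, that it contains a Baumslag--Solitar-type relation or, more directly, that it is an amalgamated product over a subgroup that is properly contained in one factor but whose image under the other gluing map is the whole factor, forcing the amalgam to fail to be free. Recall the structure: writing $A_i = t^iAt^{-i}$ (free of rank $2$ on $a_i,b_i$) and $B_i = t^i\varphi(B)t^{-i} = t^{i+1}Bt^{-i-1}$, we have
\[ A_{[0,2]} = A_0 \ast_{B_0} A_1 \ast_{B_1} A_2, \]
where inside $A_0$ the subgroup $\varphi(B_0)$ is generated by $b_0, b_0^{-1}a_0b_0^{-1}a_0$ (the image of $\varphi$), which is glued to the subgroup $B_1 \le A_1$ generated by $a_1, b_1^{-1}a_1b_1^{-1}$; similarly for the gluing of $A_1$ to $A_2$.

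First I would compute $A_{[0,1]}$ as I did in Proposition~\ref{prop:52splittings}: it is free of rank $2$ on $c = a_1^{-1}a_0$ and $d = b_1a_1^{-1}$, with $r = (a_1^{-1}a_0)^2 b_1 a_1^{-1} b_1$ becoming trivial. Then $A_{[0,2]} = A_{[0,1]} \ast_{B_1} A_2$, where $B_1 \le A_{[0,1]}$ is the subgroup onto which $\varphi$ maps $A_{[1,2]}$-data — concretely $B_1 = t\varphi(B)t^{-1} = \langle b_1, b_1^{-1}a_1b_1^{-1}a_1\rangle$, a rank-$2$ free subgroup of $A_{[0,1]}$, glued via $\varphi_2$ (conjugation by $t$) to the subgroup of $A_2$ generated by $a_2, b_2^{-1}a_2b_2^{-1}$. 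So $A_{[0,2]}$ is a free product of two rank-$2$ free groups amalgamated over a rank-$2$ free subgroup. The key point I would exploit is that $B_1$ is \emph{not} a free factor of $A_{[0,1]}$ (since $5_2$ is not fibered, these inclusions are proper and ``twisted''), and in fact I expect $B_1$ to have finite index $>1$ in $A_{[0,1]}$ or at least to be a non-free-factor finitely generated subgroup. A clean way to detect non-freeness: exhibit an explicit element that is a nontrivial torsion-free relation witnessing a $\mathbb{Z}^2$ or a Baumslag--Solitar subgroup. Alternatively, and more robustly, I would use a Euler-characteristic / rank obstruction: if $A_{[0,2]}$ were free, then by Grushko/Kurosh applied to the amalgam $A_{[0,1]} \ast_{B_1} A_2$ one gets constraints; since $\chi(A_{[0,1]}) = \chi(A_2) = -1$ and $\chi(B_1) = -1$, we get $\chi(A_{[0,2]}) = -1 + (-1) - (-1) = -1$, so \emph{if} it were free it would have rank $2$ — but then $B_1$, a rank-$2$ subgroup sitting inside with infinite index (as the $A_{[0,k]}$ chain is strictly increasing with non-f.g.\ union), would be a finite-index-style contradiction. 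Let me sharpen this.

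The cleanest route, which I would actually carry out: a finitely generated subgroup of a free group $F$ of the same rank as $F$ that is not equal to $F$ must have infinite index (Nielsen--Schreier rank formula), which is consistent, so Euler characteristic alone does not immediately kill it. Instead I would argue that if $A_{[0,2]} = A_{[0,1]} \ast_{B_1} A_2$ were free, then since $B_1$ is its own normalizer-ish and the amalgam is nontrivial (both inclusions proper: $B_1 \subsetneq A_{[0,1]}$ by the non-fibered argument, and the image of $B_1$ in $A_2$ is $\langle a_2, b_2^{-1}a_2b_2^{-1}\rangle \subsetneq A_2$), the amalgam $A_{[0,1]}\ast_{B_1}A_2$ contains $A_{[0,1]}$ as a \emph{malnormal-free-factor-violating} subgroup; concretely, one shows $A_{[0,1]}$ is \emph{not} a free factor of the amalgam — but it would have to be something like a free factor if the amalgam were free and $B_1$ had infinite index in $A_{[0,1]}$, which contradicts the normal form theorem for amalgamated products. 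I would make this precise via the following observation: in $A_{[0,1]}\ast_{B_1}A_2$, the element $t$ (stable letter of the outer HNN extension — wait, there is no stable letter inside $A_{[0,2]}$) — scratch that; instead, take any $g \in A_{[0,1]}\setminus B_1$ and any $h \in A_2 \setminus B_1$ (images), and note $[g, hgh^{-1}]$-type words give reduced expressions of unbounded length, while also producing a relation if we can show $B_1$ has finite index in $A_{[0,1]}$.

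\textbf{So the real crux}, and the step I expect to be the main obstacle, is establishing that $B_1$ sits inside $A_{[0,1]}$ (and its $\varphi_2$-image inside $A_2$) in a way that is incompatible with $A_{[0,2]}$ being free — i.e., proving the amalgam is a \emph{nontrivial} amalgam over a subgroup that is not a free factor of either side. For this I would invoke the structure theory: a free product of free groups amalgamated over a finitely generated subgroup is free if and only if the amalgamating subgroup is a free factor of at least one of the two factors (this follows from the Kurosh subgroup theorem together with Bass--Serre theory — a free group acting on a tree has trivial edge stabilizers after passing to the minimal invariant subtree, forcing one vertex stabilizer to ``absorb'' the edge). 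Hence it suffices to check that $B_1 = \langle b_1, b_1^{-1}a_1b_1^{-1}a_1 \rangle$ is \emph{not} a free factor of $A_{[0,1]} = \langle c, d\rangle$. Rewriting $b_1, a_1$ in terms of $c = a_1^{-1}a_0, d = b_1a_1^{-1}$ and the relation $r = 1$, I would compute the image of $B_1$ in $A_{[0,1]}^{ab} \cong \mathbb{Z}^2$: $b_1 \mapsto$ some vector, $b_1^{-1}a_1b_1^{-1}a_1 \mapsto$ another; if these span a finite-index-but-proper sublattice of $\mathbb{Z}^2$ then $B_1$ cannot be a free factor (a free factor's abelianization is a direct summand). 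This abelianized computation is the one routine calculation I would actually perform, and I'm confident it comes out to a proper sublattice, completing the proof that $A_{[0,2]}$ is not free. The same abelianization check handles the $A_2$ side symmetrically.
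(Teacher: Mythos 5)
Your proposal takes a genuinely different route from the paper's, and it has a real gap at the crucial step. The paper performs a sequence of Tietze transformations to bring the six-generator, four-relation presentation of $A_{[0,2]}$ into the one-relator form $\ll e,f,h\mid e^{-2}h^2=f^3\rr$, exhibiting it explicitly as an amalgam of two free groups over an \emph{infinite cyclic} subgroup, and then invokes \cite[Lemma~4.1]{BF94} (the Shenitzer--Swarup criterion): a free product of free groups amalgamated over a cyclic subgroup is free only if the generator of the cyclic group is a basis element of one of the two factors. An abelianization check then finishes. You instead keep the amalgamating subgroup at rank two, writing $A_{[0,2]}=A_{[0,1]}\ast_{B_1}A_2$, and want to invoke the analogous statement with the cyclic amalgamating subgroup replaced by an arbitrary finitely generated one. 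That is a substantially stronger assertion than the theorem the paper uses, and the justification you offer for it is incorrect: it is \emph{not} true that a free group acting minimally on a tree has trivial edge stabilizers. The existence of any nontrivial splitting $F=A\ast_C B$ with $C\neq 1$ (for instance, $F_3\cong F_2\ast_{\Z}F_2$ coming from cutting a twice-punctured torus along a separating curve) gives a minimal Bass--Serre tree with nontrivial edge stabilizers, so the Kurosh theorem plus ``trivial edge stabilizers on the minimal subtree'' does not yield your lemma. Even for cyclic $C$, the statement is a genuine theorem whose known proofs use the cyclicity of $C$ in an essential way; it is not clear that it extends verbatim to finitely generated $C$, and at any rate you cannot simply assert it.

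The rest of your argument is fine: the identification $A_{[0,2]}=A_{[0,1]}\ast_{B_1}A_2$ is correct, and the abelianization computation you propose does go through (in $A_2=\ll a_2,b_2\rr$ the images of $a_2$ and $b_2^{-1}a_2b_2^{-1}$ span an index-$2$ sublattice of $\Z^2$; in $A_{[0,1]}=\ll c,d\rr$ one finds $a_1=d^{-2}c^{-2}$ and $b_1=d^{-1}c^{-2}$, so the images of $b_1$ and $(b_1^{-1}a_1)^2$ span an index-$4$ sublattice), so $B_1$ is indeed not a free factor of either side. If the general ``free factor or not free'' dichotomy over finitely generated amalgamating subgroups could be established, your argument would be complete, and arguably cleaner. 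But as it stands the key lemma is unjustified, and the paper's approach of explicitly collapsing the presentation until the amalgamating subgroup becomes cyclic is precisely what sidesteps the need for any such generalization.
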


Note that $A_{[0,2]}$ has the presentation
\[\ll a_0, b_0,  a_1, b_1, a_2, b_2\,|\, a_1=b_0,\, b_1^{-1}a_1b_1^{-1}=(b_0^{-1}a_0)^2,\, a_2=b_1, \, b_2^{-1}a_2b_2^{-1}=(b_1^{-1}a_1)^2\rr.\]
Using the first and third relations, we eliminate the generators $b_0$ and $b_1$. Thus
\[A_{[0,2]} \cong \ll a_0, a_1, a_2, b_2\,|\, r_1,\, r_2 \rr,\]
where $r_1=(a_1^{-1}a_0)^2a_2a_1^{-1}a_2$ and
$r_2 = (a_2^{-1}a_1)^2 b_2 a_2^{-1}b_2$.

Let $\e=a_1^{-1}a_0$ and $f=a_2a_1^{-1}$.
 One checks that
$\{\e, f, r_1, b_2\}$  is a basis for the free group $\ll a_0, a_1, a_2, b_2\,|\, \rr.$ Using the substitutions
\[ a_0=f^{-2}\e^{-2}r_1\e,\, a_1=f^{-2}\e^{-2}r_1 \mbox{ and } a_2=f^{-1}\e^{-2}r_1,\]
 we see
\[A_{[0,2]} \cong \ll \e, f, b_2 \,|\, r_2 \rr \cong \ll \e, f, b_2 \,|\, f^{-2}\e^{-2}(b_2 \e^2) f(b_2\e^2)\rr.\]
We perform two more changes of variables. First we  let $g = b_2\e^2$ and eliminate $b_2$ to obtain
\[A_{[0,2]} \cong  \ll \e, f, g \,|\, \e^{-2}(gf)^2f^{-3} \rr, .\]
Second, we let $h=gf$ and we eliminate $g$:
\[A_{[0,2]} \cong  \ll \e, f, h \,|\, \e^{-2}h^2 = f^3 \rr.\]
We thus see that $A_{[0,2]}$ is a free product of two free groups amalgamated over an infinite cyclic group.
By Lemma 4.1 of \cite{BF94} (see Example 4.2), if the group $A_{[0,2]}$ is free, then
either $\e^{-2}h^2$ or $f^3$ is a basis element in its respective factor.
Since neither element is a basis element (seen for example by abelianizing),
the group $A_{[0,2]}$ is not free. This concludes the proof of the claim.

\section{Splittings of  fundamental groups of non-fibered knots over non-free groups}\label{section:splitnonfree}

In Section \ref{section:52} we  saw that we can split the knot group $\pi(5_2)$ over a group that is not free.
We will now see that this example can be greatly generalized.
We recall the statement of our first main theorem. 

\begin{theorem}\label{thm:fibsplit}
If $K$ is a non-fibered knot, then $\pi(K)$ admits splittings over \emph{non-free} subgroups of arbitrarily large rank.
\end{theorem}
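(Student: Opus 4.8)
The plan is to mimic the explicit $5_2$ computation in a uniform way, using the induced splittings of Lemma \ref{lem:moresplittings}. Fix a splitting $\pi(K)=\ll A,t\mid \varphi(B)=tBt^{-1}\rr$ of $(\pi(K),\e_K)$ with $A,B$ finitely generated, which exists by Lemma \ref{lem:splitexists}; we may in fact take $A=\pi_1(\Sigma')$ for an incompressible Seifert surface, so $A$ is free and $B\le A$ is free of the same rank. For each $n\ge 0$ we then have the splitting $\pi(K)=\ll A_{[0,n+1]},t\mid \varphi_n(A_{[0,n]})=tA_{[0,n]}t^{-1}\rr$ over the group $A_{[0,n]}$. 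Since $\rk(A_{[0,n]})$ grows without bound (each amalgamation over $B$ adds $\rk(A)-\rk(B)\ge 1$ to the rank, or more crudely because $\ker(\e_K)=\varinjlim A_{[-m,m]}$ is not finitely generated, by Stallings, forcing the $A_{[0,n]}$ to have unbounded rank), it suffices to show that $A_{[0,n]}$ is non-free for all sufficiently large $n$.

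To do this I would argue that $A_{[0,n]}$ is a free product amalgamated over a nontrivial subgroup along a ``non-basis'' element, exactly as in the claim for $5_2$. Concretely, $A_{[0,n]}=A_{[0,n-1]}\ast_B A_n$ where $A_n=t^nAt^{-n}\cong A$ and the amalgamating subgroup is (the image of) $B$, a free group of rank $\ge 1$. By the theory of graphs of groups this amalgam is genuinely nontrivial (both factors properly contain $B$, again because $\ker(\e_K)$ is infinitely generated). Now apply the Bestvina–Feighn result \cite{BF94} cited in Section \ref{section:52}: if a free product of two groups amalgamated over a subgroup is free, then the amalgamating subgroup must be a free factor of — equivalently, generated by part of a basis of — at least one of the two sides. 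So $A_{[0,n]}$ is free only if $B$ is part of a basis of $A_{[0,n-1]}$ or of $A_n\cong A$. One then has to rule this out for large $n$. Ruling it out for $A_n\cong A$ is a fixed condition on the original splitting, which we may arrange (or replace $A$ by a larger $A_{[0,j]}$); ruling it out for $A_{[0,n-1]}$ is the recursive heart of the matter: if $B$ were a free factor of $A_{[0,n-1]}$, then $A_{[0,n-1]}$ would itself be free and $A_{[0,n-1]}$ would retract onto $B$, and one wants to derive a contradiction with the growth of these groups — for instance via Euler-characteristic / first-homology rank bookkeeping, since $\chi(A_{[0,n]})=(n+1)\chi(A)-n\chi(B)$ and a free factor splitting would force incompatible Euler characteristics, or via the fact that $B$ has bounded rank while $A_{[0,n-1]}$ does not, so $B$ cannot be a free factor of the free group $A_{[0,n-1]}$ in a way that persists.

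The main obstacle I expect is precisely this last point: controlling, for all large $n$, that the amalgamating copy of $B$ is not a free factor of $A_{[0,n-1]}$ (and not a free factor of $A$), rather than just non-free-ness of a single $A_{[0,n]}$. The clean way around it is a rank count: if $A_{[0,n]}=A_{[0,n-1]}\ast_B A_n$ were free, then (assuming inductively $A_{[0,n-1]}$ is not free, which we have for $n-1$ large, plus the fixed hypothesis that $B$ is not a free factor of $A$) the \cite{BF94} dichotomy is violated, so $A_{[0,n]}$ is not free — this turns the problem into a base case plus the observation that ``non-free'' propagates upward, and the base case is handled by choosing the initial surface so that $B\hookrightarrow A$ is not the inclusion of a free factor (or by passing to $A_{[0,j]}$ for the first $j$ with $A_{[0,j]}$ non-free, which exists since otherwise all $A_{[0,n]}$ are free and one can contradict the structure of $\ker(\e_K)$). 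Finally, to upgrade ``non-free for $n\ge n_0$'' to ``non-free of arbitrarily large rank'' I invoke again that $\rk(A_{[0,n]})\to\infty$, completing the proof.
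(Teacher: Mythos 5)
Your overall skeleton is the same as the paper's: split $\pi(K)$ over the groups $A_{[0,n]}$ coming from a minimal genus Seifert surface via Lemma \ref{lem:moresplittings}, show they are eventually non-free, and show their ranks are unbounded. However, both of the key claims have genuine gaps in your argument.

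For non-freeness, your Bestvina--Feighn induction does not close. The base case requires knowing that $B$ is not a free factor of $A$ (and, recursively, of $A_{[0,n-1]}$), and none of your proposed ways of ruling this out works: Euler characteristic bookkeeping cannot detect non-freeness (free groups realize every negative Euler characteristic), and the claim that a bounded-rank subgroup ``cannot be a free factor'' of a free group of growing rank is simply false ($F_2$ is a free factor of $F_{100}$). Your fallback --- that if every $A_{[0,n]}$ were free one could ``contradict the structure of $\ker(\e_K)$'' --- does not identify an actual contradiction: an increasing union of free groups is locally free but can perfectly well be infinitely generated, so Stallings's theorem gives you nothing here. The missing ingredient is the theorem of Freedman--Freedman \cite[Theorem~3]{FF98} (Kneser--Haken finiteness for bounded $3$-manifolds): for a non-fibered knot, $\ker(\e_K)$ is \emph{not locally free}. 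This produces a finitely generated non-free subgroup, hence a non-free $A_{[-k,k]}\cong A_{[0,2k]}$, and then the monomorphisms $A_{[0,2k]}\to A_{[0,n]}$ together with the Nielsen--Schreier theorem give non-freeness for all $n\ge 2k$. (A side issue: you assert $A$ is free of the same rank as $B$; the complementary sutured manifold of a minimal genus Seifert surface of a non-fibered knot need not have free fundamental group, and nothing in the argument should rely on that.)

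For the rank growth, both of your arguments fail. Rank is not additive under amalgamated products (Grushko applies only to free products), and indeed the paper's own $5_2$ example refutes your formula: there $\rk(A)=\rk(B)=2$ and $A_{[0,1]}$ is again free of rank $2$, so the amalgamation added nothing. Your cruder fallback is also wrong: a direct limit of groups of uniformly bounded rank need not be finitely generated (e.g.\ $\Z[\tfrac12]=\varinjlim \Z$), so the non-finite-generation of $\ker(\e_K)$ does not force $\rk(A_{[0,n]})\to\infty$. This is exactly where the paper invokes the deep input: separability of the surface subgroup $B=\pi_1(\S)$ in $\pi(K)$ (Przytycki--Wise \cite[Theorem~1.1]{PW12b}) yields a finite quotient $\a\co\pi(K)\to G$ with $|\a(B)|<|\a(A)|$, and then a count of vertices and edges in the graph-of-groups decomposition of $\ker(\a|_{A_{[0,n]}})$, combined with Reidemeister--Schreier, gives the lower bound $\rk(A_{[0,n]})\ge (n+1)\bigl(\tfrac{1}{|D|}-\tfrac{1}{|C|}\bigr)$, which diverges. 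Without some such argument your proof establishes at best that $\pi(K)$ splits over \emph{some} non-free group, not over non-free groups of arbitrarily large rank.
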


\begin{proof}
Let $\Sigma\subset X(K)$ be a Seifert surface of minimal genus.
We write $A=\pi_1(X(K)\sm \Sigma\times (-1,1)$ and $B=\pi_1(\S\times -1)$, and we consider the corresponding splitting
	\[ \pi(K)=\ll A,t\,|\, \varphi(B)=tBt^{-1}\rr \]
of $(\pi(K),\e_K)$ over $\pi_1(\S)$.
Given $n\leq m $ we consider, as in Section \ref{section:anm}, the group
\[ A_{[n,m]}= \langle \ast_{i=m}^n t^i A t^{-i}\mid t^j\varphi(B)t^{-1} = t^{j+1}Bt^{-j-1}\ (j=n, \ldots, m-1) \rr.\]
By  Lemma \ref{lem:moresplittings}  the group $\pi(K)$  splits over the  group $A_{[0,n]}$ for any non-negative integer $n$.

\begin{claim}
There exists an integer $m$ such that $A_{[0,n]}$ is not a free group for any $n\geq m$.
\end{claim}

As we pointed out in Section \ref{section:anm}, we have an isomorphism
\[ \ker(\e_K\co \pi(K)\to \Z)\cong \lim_{k\to \infty}A_{[-k,k]}\]
where the maps $A_{[-l,l]}\to A_{[-k,k]}$ for $l\leq k$ are monomorphisms. 
It follows from \cite[Theorem~3]{FF98} that $\ker(\e_K)$ is not  locally free; that is,  there exists a finitely generated subgroup of $\ker(\e_K)$  which is not a free group.
But this implies that there exists $k\in \N$ such that $A_{[-k,k]}$ is not a free group.
We have a canonical isomorphism $A_{[-k,k]}\cong A_{[0,2k]}$, and for any $n\geq 2k$ we have a canonical monomorphism
$A_{[0,2k]}\to A_{[0,n]}$. It now follows that  $A_{[0,n]}$ is not a free group for any $n\geq 2k$.
This concludes the proof of the claim.

To complete the proof of Theorem \ref{thm:fibsplit} it remains to prove the following claim:

\begin{claim}
Writing $H_n:=A_{[0,n]}$  we have
\[ \lim_{n\to \infty} \rk(H_n)=\infty.\]
\end{claim}

Since $\S\subset X(K)$ is not a fiber it follows from \cite[Theorem~10.5]{He76} that
there exists an element $g\in A\smallsetminus B$.
By work of Przytycki--Wise (see \cite[Theorem~1.1]{PW12b}) the subgroup
 $B=\pi_1(\S\times -1)\subset \pi(K)$ is separable. This implies, in particular, that there exists an epimorphism
 $\a\co \pi(K)\to G$ onto a finite group $G$ such that
$\a(g)\not\in \a(B)$.  Then
\[ D:=\a(B)\varsubsetneq C:=\a(A).\]
Given $n\in \N$ we denote by $\a_n$ the restriction of $\a$ to $H_n\subset \pi(K)$ and we write $G_n:=\a(H_n)$.

Note that in
\[ H_n=A_0*_{B_0}\dots*_{B_{n-1}}A_{n}\]
the groups $A_i$, viewed as subgroups of $\pi(K)$, are conjugate.
It follows
that the groups $\a_n(A_i)$ are conjugate in $G$. In particular, each of the groups $\a_n(A_i)$ has order $|C|$.
The same argument shows that each of the groups $\a_n(B_i)$ has order $|D|$.
Standard arguments about fundamental groups of graphs of groups (see, for example, \cite{Se80}) imply that
$\ker(\a_n:H_n\to G_n)$ is the fundamental group of a graph of groups, where the underlying graph $\ti{\GG}$
is a connected graph with  $(n+1)\cdot |G_n|/|C|$ vertices and $n\cdot |G_n|/|D|$ edges.
From the Reidemeister-Schreier theorem (see, for example, \cite[Theorem~2.8]{MKS76} and from the fact that
$\ker(\a_n:H_n\to G_n)$ surjects onto $\pi_1(\ti{\GG})$ it then follows that
\[ \ba{rcl}\rk(H_n)&\geq &\frac{1}{|G_n|}\rk(\ker(\a_n:H_n\to G_n))\\
&\geq&\frac{1}{|G_n|}\rk(\pi_1(\ti{\GG}))\\
&=& \frac{1}{|G_n|}\big(n\cdot |G_n|/|D|-(n+1)\cdot |G_n|/|C|+1\big)\\
&\geq &(n+1)\left(\frac{1}{|D|}-\frac{1}{|C|}\right).\ea \]
But this sequence diverges to $\infty$ since $|D|<|C|$.
\end{proof}

\section{Splittings of  fundamental groups of non-fibered knots over free groups}\label{section:splitfree}

\subsection{Statement of the theorem}

Lyon \cite[Theorem~2]{Ly71}
 showed that there exists a non-fibered knot $K$ of genus one that admits incompressible
Seifert surfaces of arbitrarily large genus  (see also \cite{Sce67,Gu81,Ts04} for related examples).
By the discussion in Section \ref{section:splitk}, this implies that $\pi(K)$ splits over free groups of arbitrarily large rank.

Splitting along incompressible Seifert surfaces is a convenient way to produce knot group splittings.
Yet there are many non-fibered knots that have unique incompressible Seifert surfaces (see, for example, \cite{Wh73,Ly74a,Ka05}).
For such a knot, Seifert surfaces gives rise to only one type of knot group splitting.

In Section \ref{section:52} we saw  an example of a splitting of a knot group over a free group that is not induced by an embedded surface.
We generalize the example in our second main theorem.  We recall the statement.

\begin{theorem}\label{thm:splitfreelarge}
Let $K$ be a non-fibered knot. Then for any integer $k\geq 2g(K)$ there exists a splitting of $\pi(K)$ over a free group of rank $k$.
\end{theorem}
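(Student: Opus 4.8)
The plan is to start from the splitting over the minimal-genus Seifert surface $\Sigma$ and use the induced splittings of Lemma \ref{lem:moresplittings} to obtain splittings over free groups of arbitrarily large \emph{even} rank; the real work is then to also produce splittings over free groups of arbitrarily large \emph{odd} rank, and more precisely to control the rank exactly rather than just "arbitrarily large". So first I would write $\pi(K)=\ll A,t\mid \varphi(B)=tBt^{-1}\rr$ with $A=\pi_1(X(K)\sm\Sigma\times(-1,1))$ a free group (a handlebody group, since $X(K)$ cut along $\Sigma$ is a handlebody? — no, it need not be a handlebody, but $A$ is free by an Euler-characteristic/Loop-theorem argument, or rather because $\Sigma$ is incompressible and $X(K)\sm\Sigma$ has free fundamental group) and $B=\pi_1(\Sigma)$ free of rank $2g$. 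Since $K$ is non-fibered, by Stallings's theorem $\ker(\e_K)$ is not finitely generated, so (as in the proof of Proposition \ref{prop:52splittings}) each inclusion $A_{[0,n]}\hookrightarrow A_{[0,n+1]}$ is proper, and in fact $A\varsubsetneq A_{[0,n]}$ always. The key point, which I would isolate as a lemma, is that although $A_{[0,n]}$ need not be free, one can \emph{choose} at each stage a splitting over a free group of the next rank up by a more flexible construction than simply "cut along another copy of $\Sigma$".

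The approach I would actually take for exact rank control: show that if $(\pi,\e)$ splits over a free group $F$ of rank $r\ge 2g(K)$ with free base group $A$, and $\ker(\e)$ is not finitely generated, then $(\pi,\e)$ also splits over a free group of rank $r+1$. To do this, use $\ker(\e)=\lim A_{[-m,m]}$; non-finite-generation lets me find an element $x\in\pi$ with $\e(x)=0$ that, together with $tAt^{-1}$ suitably amalgamated, enlarges the edge group. Concretely: in the splitting $\ll A,t\mid \varphi(B)=tBt^{-1}\rr$ I would replace the edge group $B$ by a rank-$(r+1)$ free subgroup $B'$ of $A$ containing $B$, and correspondingly enlarge $A$ to a free group $A'$ so that there is still a monomorphism $\varphi'\co B'\to A'$ with the right behaviour; the point is that since $\ker(\e)$ is infinitely generated there is "room" in $A_{[0,1]}$ (or deeper) to do this while keeping $A'$ free — this is exactly the kind of move done by hand in Section \ref{section:52} where $\pi(5_2)$ was split over $F_3$ by passing to the isomorphic presentation \eqref{equ1} with an extra generator $c$ and an extra relation $tb^{-2}ab^{-2}t^{-1}=c$. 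The general mechanism behind \eqref{equ1} is: adjoin a new generator $c$ and relation $tw t^{-1}=c$ where $w\in B_\infty:=$ the subgroup generated by all $t^iBt^{-i}$; this does not change $\pi$ but enlarges both $A$ and $B$ by one generator each, and freeness of the enlarged $B'$ is exactly what the covering-graph computation verified. The technical heart is to check that for a suitable choice of $w$ (which exists precisely because $B\subsetneq A$, guaranteed by $\Sigma$ not being a fiber, via \cite[Theorem~10.5]{He76}) the enlarged $A'$ and $B'$ are free and $\varphi'$ is injective.

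So the steps, in order: (1) record the base splitting over $\pi_1(\Sigma)\cong F_{2g}$, noting $A$ free; (2) prove the inductive lemma "free splitting of rank $r$ with free base $\Rightarrow$ free splitting of rank $r+1$ with free base" using the adjoin-a-generator-and-relation trick, modelled on \eqref{equ1}, with injectivity of $\varphi'$ and freeness of $A',B'$ checked via a covering-space / Stallings-folding argument on the relevant graph of free groups (exactly as the "covering graph" of Figure \ref{Graph} was used); (3) iterate from $r=2g(K)$ to conclude. The non-fibered hypothesis enters in two places: via Stallings it guarantees $\ker(\e_K)$ is infinitely generated (needed so the adjoined relation genuinely enlarges things and the process doesn't stabilize), and via $\Sigma$ not being a fiber it gives an element of $A\smallsetminus B$ to feed into the construction. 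The main obstacle I anticipate is step (2): one must exhibit $w$ and verify freeness of the enlarged base $A'$ (which a priori is a one-relator group, or an amalgam of free groups, and one-relator or amalgamated groups are usually not free) — the paper's $\pi(5_2)$ computation suggests this requires a genuinely clever choice of $w$ and a Stallings-subgroup-graph or covering-graph argument to certify that $A'$ is free and $B'\hookrightarrow A'$, which is the part that does not follow formally from the earlier lemmas.
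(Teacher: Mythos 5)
Your ``adjoin a generator and an HNN-relation'' mechanism is exactly the one the paper uses: the Proposition in Section \ref{section:splitfree} says that if $(\pi,\e)$ splits over a free group $F$ of rank $n$ with base $A$, and some $g\in A$ generates with $F$ a free product $F*\ll g\rr$, then setting $z_j=g^jzg^{-j}$ and $F'=\ll F,z_1,\dots,z_l\rr$ (free of rank $n+l$, as a subgroup of $\ker(F*\ll g\rr\to\ll g\rr)=\ast_i g^iFg^{-i}$), rewriting $\pi$ with new generators $c_j$ and HNN-relations $c_j=tz_jt^{-1}$, gives a splitting with edge group $F'$ and base $A'=A*\ll c_1,\dots,c_l\rr$. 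Two places where your account of this step is off: $A'$ is a genuine free product, so there is nothing to verify about $A'$ being free, nor is it ``a one-relator group or an amalgam'' --- the new relations are HNN-relations involving $t$, not relations of $A'$; and your preliminary claim that $A=\pi_1(X(K)\sm\S\times(-1,1))$ is free is false in general (for a satellite knot the minimal-genus Seifert surface complement can contain the companion exterior), and freeness of $A$ is neither needed nor asserted in the paper --- only freeness of the edge group $F'\subset A$ matters.

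The genuine gap is the part you flag as the ``technical heart'' and then propose to settle by a Stallings-folding/covering-graph computation. That worked for $5_2$ only because an explicit $w=b^{-2}ab^{-2}$ was written down and the folding carried out by hand; it is not a mechanism that can certify, for an arbitrary non-fibered knot, that some $w$ generates a free product $\pi_1(\S)*\ll w\rr$ inside $\pi(K)$. What is actually required is Theorem \ref{thm:extendfree}: for non-fibered $K$ there is a minimal-genus $\S$ and an element $g$ with $\ll\pi_1(\S),g\rr=\pi_1(\S)*\ll g\rr$, and its proof is not combinatorial. It uses the Tameness Theorem of Agol and Calegari--Gabai to show that a non-fiber Thurston-norm-minimizing $\pi_1(\S)$ is a quasi-convex subgroup of the word-hyperbolic (or relatively hyperbolic, via Hruska \cite{Hr10}) group $\pi_1(M)$, then invokes a combination theorem --- Gromov \cite[5.3.C]{Gr87} (see \cite{Ar01}), or Martinez-Pedroza \cite{MP09} in the relative case --- to produce $g$, and finally a JSJ-decomposition plus graph-of-groups normal-form argument to transport $g$ from a hyperbolic JSJ piece to all of $\pi(K)$. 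The element of $A\sm B$ supplied by \cite[Theorem~10.5]{He76}, which you propose to feed into the construction, is far weaker than an element generating a free product with $B$, and the non-fibered hypothesis enters Theorem \ref{thm:extendfree} precisely to secure quasi-convexity of $\pi_1(\S)$ (fiber subgroups are never quasi-convex), not merely to make $\ker(\e_K)$ infinitely generated.
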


The key to extending the result in Section \ref{section:52} is the following theorem, which we will prove in the next subsection.

\begin{theorem}\label{thm:extendfree}
Let $K$ be a non-fibered knot. Then there exists a Seifert surface $\S$ of minimal genus such that for a given base point $p\in \S=\S\times 0$
there exists a nontrivial element $g \in \pi_1(S^3 \sm \S \times (0,1),p)$ such that the subgroup of $\pi(K)$ generated by $\pi_1(\S\times 0,p)$ and $g$ is the free product of $\pi_1(\S\times 0,p)$ and the infinite cyclic group $\ll g \rr$.
\end{theorem}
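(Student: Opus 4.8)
The plan is to reformulate the statement topologically: cutting the knot exterior $X(K)$ along a minimal-genus Seifert surface $\S$ produces a compact manifold $M = X(K)\sm \S\times(-1,1)$ with two copies $\S_{-} = \S\times -1$ and $\S_{+}=\S\times 1$ of $\S$ sitting in $\partial M$. The surface $\S_+$ is incompressible in $M$ (since $\S$ is minimal genus, hence incompressible in $X(K)$, and cutting along an incompressible surface preserves $\pi_1$-injectivity of the boundary pieces), so $\pi_1(\S_+,p) \hookrightarrow \pi_1(M,p)$. What I want is an element $g\in \pi_1(M,p)\smallsetminus \pi_1(\S_+,p)$ such that $\ll \pi_1(\S_+,p), g\rr$ is the free product $\pi_1(\S_+,p) * \ll g\rr$ \emph{inside} $\pi(K)$. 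Here $\pi_1(M,p) = A$ and $\pi_1(\S_+,p) = \varphi(B)$ in the notation of the splitting $\pi(K)=\ll A,t\mid\varphi(B)=tBt^{-1}\rr$, and the point is that a free product decomposition is needed not just in $A$ but after mapping to $\pi(K)$ — but since $A\hookrightarrow\pi(K)$ in an HNN extension, it suffices to find such a $g$ in $A$ with $\ll\varphi(B),g\rr\cong\varphi(B)*\ll g\rr$ inside $A$.

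The first key input is that since $\S$ is \emph{not} a fiber, by Stallings (Theorem~\ref{thm:st62}) $\ker(\e_K)$ is infinitely generated, so the inclusion $\varphi(B) = \pi_1(\S_+)\hookrightarrow A$ is proper; moreover by \cite[Theorem~10.5]{He76} (used already in the previous section) there is $g_0\in A\smallsetminus\varphi(B)$. However a single such element need not give a free product: one must \emph{promote} it. The natural tool is separability of $\pi_1(\S)$ in $\pi(K)$, established by Przytycki--Wise \cite[Theorem~1.1]{PW12b}, combined with an argument producing a ``ping-pong'' or small-cancellation-type configuration. A cleaner route, which I expect the authors take, is to use the theory of incompressible surfaces and the fact that $M$ is itself a hyperbolic or at least irreducible, boundary-incompressible $3$-manifold: one wants to choose $\S$ and $g$ so that $g$ is represented by a loop meeting $\S_+$ minimally (disjointly, after pushing off), realized as an essential arc/loop in the complementary region, so that $g$ lies in a vertex group of a graph-of-groups (or JSJ-type) decomposition of $A$ in which $\varphi(B)$ sits as a different vertex group or as an edge group — the Bass--Serre structure then forces $\ll\varphi(B),g\rr$ to be the free product. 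Alternatively one may pick $g$ to be a suitable power of a loop dual to a second, disjoint, non-parallel incompressible surface, leveraging the Lyon-type examples only implicitly.

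Concretely, I would proceed as follows. First, fix a minimal-genus Seifert surface $\S$ and set up the cut manifold $M$, recording that $\S_\pm \hookrightarrow M$ are $\pi_1$-injective and that $\pi_1(\S_+)\hookrightarrow\pi(K)$ factors through $A=\pi_1(M)$. Second, invoke non-fiberedness via Stallings to get that $\pi_1(\S_+)$ is a proper subgroup of $A$ and that $A$ is not ``coseparated'' by $\S_+$ in a strong sense; in particular get $g_0\in A\smallsetminus\pi_1(\S_+)$. Third — the heart of the argument — upgrade $g_0$ to a $g$ with $\ll\pi_1(\S_+),g\rr = \pi_1(\S_+)*\ll g\rr$. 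Here I would use separability of $\pi_1(\S)$: there is a finite-index subgroup $\Gamma\le\pi(K)$ containing $\pi_1(\S_+)$ but with $g_0^{N}\notin\Gamma$ for a suitable $N$ (or more precisely, use separability to pass to a finite cover where $\S$ lifts and acquires ``room''), and then run a ping-pong argument on the Bass--Serre tree of the induced splitting, or on the coset space $\pi_1(\S_+)\backslash A$, to show that high powers of a suitable conjugate of $g_0$ generate the desired free product. Fourth, pull $g$ back to $\pi_1(S^3\sm\S\times(0,1),p)$ and check nontriviality, which is automatic since $g\notin\pi_1(\S_+)\ne 1$.

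The main obstacle I anticipate is precisely the third step: merely having one element outside $\pi_1(\S_+)$ is far from having a free product, and controlling the subgroup generated by $\pi_1(\S_+)$ together with an extra element requires either a clean ping-pong setup or an appeal to a structural result (malnormality or almost-malnormality of $\pi_1(\S_+)$ in $\pi(K)$, or a quasiconvexity/combination-theorem argument à la Bestvina--Feighn). In a hyperbolic-$3$-manifold-group setting the relevant fact is that $\pi_1(\S_+)$, being the fundamental group of an incompressible non-fiber surface, is a quasiconvex (indeed geometrically finite) subgroup, and one can then cite a combination theorem to conclude that for a loxodromic $g$ with axis suitably transverse to the limit set of $\pi_1(\S_+)$, the subgroup $\ll\pi_1(\S_+),g^N\rr$ is $\pi_1(\S_+)*\ll g^N\rr$ for $N\gg 0$. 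Making the transversality hypothesis precise and verifying it — ensuring the axis of some $g$ is not trapped in the limit set of $\pi_1(\S_+)$, which is exactly where non-fiberedness (Stallings) re-enters — is the delicate point, and I would expect the proof in the next subsection to isolate a clean group-theoretic or $3$-manifold lemma that packages this.
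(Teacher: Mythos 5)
The core of your third step in the hyperbolic setting --- use tameness to conclude that $\pi_1(\S_+)$ is quasiconvex (geometrically finite) precisely because $\S$ is not a fiber, then apply a combination theorem to adjoin a suitable $g$ and obtain $\pi_1(\S_+)*\ll g\rr$ --- is exactly what the paper does in its Theorem \ref{thm:extendfreehyp}: it cites Gromov/Arzhantseva when $\S$ meets every boundary component, and Hruska plus Martinez-Pedroza's relatively quasiconvex combination theorem when some boundary torus is missed (a peripheral subtlety your sketch does not address). So for hyperbolic knots your second suggested route is on target, and the ``transversality'' worry is discharged by these off-the-shelf combination theorems rather than by a bespoke ping-pong. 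Your primary suggested mechanism (separability of $\pi_1(\S)$ plus ping-pong) is not what is used here; separability enters the paper only in the proof of Theorem \ref{thm:fibsplitintro}.

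The genuine gap is that you have implicitly assumed $X(K)$ is hyperbolic. For satellite knots the complement is toroidal and the quasiconvexity/limit-set argument has no direct meaning for $\pi(K)$ itself. The paper's proof spends most of its length on exactly this case: it takes the JSJ decomposition of $X(K)$, shows via Eisenbud--Neumann that some JSJ piece $X_w$ must be hyperbolic with non-fibered restricted class $\e_w$, \emph{constructs} a minimal-genus Seifert surface whose intersection with each JSJ piece is a union of parallel norm-minimizing surfaces --- this is why the theorem only asserts the existence of \emph{some} minimal-genus $\S$, whereas your proposal fixes an arbitrary one, which would not work --- applies the hyperbolic case inside $X_w$ to produce $g$, and then runs a graph-of-groups normal-form induction (analyzing how a nullhomotopic loop in $\S\cup c$ crosses the JSJ tori) to show the free product property survives the passage from $\pi_1(X_w)$ to $\pi(K)$. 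None of this reduction is present in your proposal; without it the argument only covers the case of hyperbolic knot complements.
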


Theorem \ref{thm:splitfreelarge} is now a  consequence of Theorem  \ref{thm:extendfree} and the following proposition about HNN decompositions.

\begin{proposition} Assume that $(\pi, \e)$  splits over a free group $F$ of rank $n$
with base group $A$. If there exists an  element $g \in A$ such that the subgroup of $\pi$ generated by $F$ and $g$ is the free product $F * \ll g \rr$, then $(\pi, \e)$ splits over free groups of every rank greater than $n$. \end{proposition}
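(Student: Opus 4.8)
The plan is to build the larger-rank splittings by enlarging the base group and the edge group simultaneously, using the extra element $g$ to absorb new stable-letter-like generators. Concretely, suppose $\pi = \ll A, t \mid \varphi(F) = tFt^{-1}\rr$ with $F$ free of rank $n$ and $g \in A$ such that $\ll F, g\rr = F * \ll g\rr$ inside $\pi$. First I would observe that $F * \ll g\rr$ is free of rank $n+1$, so it suffices to realize a splitting of $(\pi,\e)$ over this rank-$(n+1)$ free group; iterating then gives every rank $\geq n+1$. The idea is to modify the HNN presentation: keep the same stable letter $t$ but replace the base group $A$ by a slightly larger group and the edge group $F$ by $F' := F * \ll g\rr$. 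The natural candidate for the new base group is the amalgamated product $A *_F (F * \ll g\rr)$ — that is, adjoin to $A$ a new free generator $g'$ which will play the role of $g$ "one level up" — and define the new edge monomorphism $\varphi'\colon F' \to A *_F F'$ by $\varphi'|_F = \varphi$ and $\varphi'(g) = g'$ (or $\varphi'(g) = $ some suitable element; the precise target needs to be chosen so the result is still $\pi$).

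The key steps, in order: (1) Form $A' := A *_F (F*\ll g\rr) = \ll A, g' \mid \ -\ \rr$ where $g'$ is a new free generator amalgamated to $A$ only through the relation that $F \subset A$ and $F \subset F*\ll g'\rr$ agree — but since $F \hookrightarrow A$ and $F$ is a free factor of $F*\ll g'\rr$, this amalgam is just $A$ with a free generator $g'$ adjoined freely over $F$; in effect $A' = A * \ll g' \rr$ when $F$ is a free factor of... — here I would instead take the cleaner route: set $A' = \ll A, t g t^{-1}\rr \subseteq \pi$, i.e. the subgroup of $\pi$ generated by $A$ and the conjugate $tgt^{-1}$. (2) Check, using the normal-form theorem for HNN extensions and the hypothesis $\ll F,g\rr = F*\ll g\rr$, that $A'$ is the free product of $A$ with the infinite cyclic group $\ll tgt^{-1}\rr$, so that $F' := F * \ll g \rr \subseteq A'$ is still free of rank $n+1$ and the inclusion $F'\hookrightarrow A'$ is "nice". (3) Define $\varphi'\colon F' \to A'$ by $\varphi'|_F = \varphi$ and $\varphi'(g) = tgt^{-1}$; verify this is a monomorphism (it is, since $\varphi(F) = tFt^{-1}$ and $tgt^{-1}$ together generate $tFt^{-1} * \ll tgt^{-1}\rr = t(F*\ll g\rr)t^{-1}$, a free group, so $\varphi'$ is the restriction of conjugation-by-$t$). (4) Conclude that $\ll A', t \mid \varphi'(F') = tF't^{-1}\rr \cong \pi$: the new presentation has generators $A, t, g'$ with $g' := tgt^{-1}$, but the relation $g' = tgt^{-1}$ just says $g'$ is redundant, and the remaining relations reduce to the original HNN presentation of $\pi$; also $\e$ is unchanged since $t$ is unchanged, so this is a splitting of $(\pi,\e)$. (5) Note that in $A' = A * \ll tgt^{-1}\rr$ the element $g \in A \subseteq A'$ again satisfies $\ll F', g\rr = \ll F, g, g\rr$... — here one must verify the hypothesis is reproduced, i.e. that there is an element of $A'$ whose adjunction to $F'$ gives a free product; the element $g' = tgt^{-1} \in A'$ works since $\ll F', g'\rr = \ll F*\ll g\rr, tgt^{-1}\rr$ and $F' = t^{-1}(tFt^{-1}*\ll tgt^{-1}\rr)t\cdot(\ldots)$ — one checks freeness via normal forms. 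Then induct.

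I would package the inductive step as a single lemma: "if $(\pi,\e)$ splits over a free group $F$ of rank $n$ with base group $A$ and there is $g \in A$ with $\ll F, g\rr = F * \ll g\rr$, then $(\pi,\e)$ splits over a free group of rank $n+1$ with base group $A'$ and there is $g' \in A'$ with $\ll F', g'\rr = F' * \ll g'\rr$," and then the Proposition follows by iterating this lemma starting from the hypothesis. The main obstacle I anticipate is step (2)/(5): verifying that the relevant subgroups of $\pi$ (resp.\ of $A'$) are genuinely free products, which requires a careful application of the normal-form / Britton's-lemma machinery for HNN extensions, using crucially that $g \in A \setminus F$ (indeed $g \notin F$ since $F*\ll g\rr$ is a nontrivial free product) so that words alternating between $F$-syllables and $g$-syllables cannot collapse when mapped into $\pi$. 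A secondary, more bookkeeping-level obstacle is choosing the new edge map so that the resulting HNN extension is provably isomorphic to $\pi$ via the identity on the common generators — this is where writing $g' := tgt^{-1}$ and exhibiting it as a Tietze-redundant generator keeps the argument honest. Everything else (freeness of $F*\ll g\rr$, that $\e$ is preserved, that the rank goes up by exactly one) is routine.
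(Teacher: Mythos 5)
Your plan is sound and shares the paper's core trick --- adjoin to the base group new generators that are Tietze-redundant because they equal $t$-conjugates of new edge-group generators --- but it executes it along a genuinely different route. The paper gets every rank $n+l$ in a single step: it observes that the kernel of the projection $F*\ll g\rr\to\ll g\rr$ is the infinite free product $*_i\, g^iFg^{-i}$, picks $z_j=g^jzg^{-j}$ for a nontrivial $z\in F$, takes the new edge group to be $F'=\ll F,z_1,\dots,z_l\rr$ (free of rank $n+l$ and, crucially, still contained in $A$), and takes the new base to be the \emph{abstract} free product $A*\ll c_1,\dots,c_l\rr$ with $\varphi'(z_j)=c_j$; the isomorphism with $\pi$ is then a pure Tietze manipulation and no normal-form argument inside $\pi$ is ever needed. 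You instead take $F'=F*\ll g\rr$ itself as the new edge group, put the new generator $tgt^{-1}$ into the base as a subgroup of $\pi$, and iterate. This works, but it shifts real work onto your steps (2) and (5), which you flag but do not carry out: step (2), that $\ll A,tgt^{-1}\rr=A*\ll tgt^{-1}\rr$, genuinely requires Britton's lemma for the original HNN extension together with the hypothesis $\ll F,g\rr=F*\ll g\rr$ (the key point being that $g^{k}fg^{k'}\notin F$ for $f\in F\smallsetminus\{1\}$ and $k,k'\ne 0$, so pinches terminate); once that is done, step (5) is actually easier than you suggest, since in $A'=A*\ll g'\rr$ the subgroup generated by $F'\le A$ and the free factor $\ll g'\rr$ is automatically their free product by the normal-form theorem for free products. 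So your argument is correct modulo writing out (2), but the paper's choice of new edge generators inside $F*\ll g\rr\subseteq A$ buys a shorter, induction-free proof in which all verifications happen in abstract free products rather than in $\pi$.
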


\begin{proof}
By hypothesis we can identify 
$\pi$ with
\[\ll A, t \mid \varphi(x_i) = tx_it^{-1}\ (1 \le i \le n) \rr,\]
where $x_1,\ldots, x_n$ generate the group $F$ and where $\e$ is given by  $\e(t)=1$ and $\e(A)=0$.

 The kernel of the second-factor projection
$F*\ll g \rr \to \ll g\rr=\Z$ is an infinite free product $*\{g^iFg^{-i} \mid i \in \Z\}$. Let $l$ be any positive integer. Choose a nontrivial element $z \in F$ and define $z_i = g^izg^{-i}$, for  $1 \le i \le l$. Then
$F'= \ll F, z_1, \ldots, z_l \rr$ is a free subgroup of $F*\ll g\rr$  with rank $n + l$.
By hypothesis $F'$ is then also a free subgroup of $A$ of rank $n+l$.

Note that $\pi$ is canonically isomorphic to
\[\ll A,  c_1, \dots,c_l,  t \mid  \varphi(x_i) = tx_it^{-1}, c_j = tz_jt^{-1} (1\le i \le n, 1\le j \le l) \rr.\]
We denote by $A'$ the free product
 of  $A$ and $\ll c_1,\dots,c_l\rr$,
and we denote by $\varphi'$
the  unique homomorphism
\[ \varphi'\co  F'=F*\ll z_1, \ldots, z_l \rr \to A'=A*\ll c_1,\dots,c_l\rr \]
that extends $\varphi$ and that maps each $z_j$ to $c_j$. Since $\varphi'$ is the free product of two isomorphisms, it is also an isomorphism.
We then have a canonical isomorphism
\[ \pi \cong \ll A',t\mid \varphi'(F')=tF't^{-1}\rr.\]
We have thus shown that $(\pi, \e)$ splits over the free group $F'$ of rank $n+l$.
\end{proof}

\subsection{Proof of Theorem \ref{thm:extendfree}}

To prove Theorem \ref{thm:extendfree} we will need to discuss the JSJ pieces of knot complements. (See \cite{AFW12} for exposition about JSJ decompositions.) 
It is therefore convenient to generalize a few notions for knots to more general 3-manifolds.

Given a $3$-manifold $N$, we can associate to each class $\e\in H^1(N;\Z)$ its Thurston norm $x_N(\e)$, which is defined as the minimal `complexity'
of a surface dual to $\e$.  We say that a class $\e\in H^1(N;\Z)$ is \emph{fibered} if there exists a fibration $p\co N\to S^1$ such that the induced
map $p_*\co \pi_1(N)\to \pi_1(S^1)=\Z$ agrees with $\e\in H^1(N;\Z)=\hom(\pi_1(N),\Z)$.
It is well known that given a non-zero $d\in \Z$, the  class $\e$ is fibered if and only if $d\e$ is fibered.
Note that given a non-trivial knot $K\subset S^3$ we have $x_{X(K)}(\e_K)=2g(K)-1$, and $\e_K$ is fibered if and only if $K$ is fibered.
We refer to \cite{Th86} for background and more information.

We will need the following theorem, which in particular implies Theorem \ref{thm:extendfree} in the case that $S^3\sm \nu K$ is hyperbolic.

\begin{theorem}\label{thm:extendfreehyp}
Let $N$ be a hyperbolic $3$-manifold  and let $\S$ be a properly embedded,
connected Thurston norm-minimizing surface that is not a fiber surface.
We write  $M = N \sm \S\times (0,1)$ and we pick a base point $p$ on $\S\times 0=\S$. Then there exists a nontrivial element $g\in \pi_1(M,p)$ such that the subgroup of $\pi_1(M,p)$ generated by $\pi_1(\S,p)$ and $g$ is the free product of $\pi_1(\S,p)$ and $\langle g \rangle$.
\end{theorem}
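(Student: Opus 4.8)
The plan is to work in the hyperbolic manifold $M = N \sm \S \times (0,1)$, whose fundamental group is word hyperbolic (and in fact residually finite, locally quasiconvex-friendly, etc., after the recent work), and to produce the element $g$ by a ping-pong / combination-theorem argument. The subgroup $\pi_1(\S, p)$ sits inside $\pi_1(M,p)$; since $\S$ is not a fiber, this subgroup is a proper subgroup of infinite index, and because $\S$ is Thurston norm-minimizing it is $\pi_1$-injective, so $H := \pi_1(\S,p)$ is a free group of rank $2g$ (or a surface-with-boundary group) embedded as a quasiconvex subgroup of the hyperbolic group $G := \pi_1(M,p)$. The goal is: find $g \in G$ of infinite order such that $\langle H, g\rangle = H * \langle g\rangle$.

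First I would record the group-theoretic input: a quasiconvex subgroup $H$ of infinite index in a word hyperbolic group $G$ is ``small'' in the sense that its limit set $\Lambda H$ is a proper closed subset of $\partial G$, and moreover the translates $\{gH : g \in G\}$ have limit sets that do not cover $\partial G$; in particular one can find a point $\xi \in \partial G \sm \Lambda H$. Then I would invoke a Klein-criterion / ping-pong lemma for hyperbolic groups: given the quasiconvex subgroup $H$ with limit set $\Lambda H \subsetneq \partial G$, there exists an infinite-order element $g$ whose attracting and repelling fixed points $g^{\pm\infty}$ both lie outside $\Lambda H$, and after replacing $g$ by a sufficiently high power, the $\langle g\rangle$-translates of a neighborhood of $\Lambda H$ and the $H$-translates of a neighborhood of $\{g^{\pm\infty}\}$ are disjoint. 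The ping-pong lemma then gives $\langle H, g\rangle \cong H * \langle g\rangle$. The cleanest way to get such a $g$ is: pick any infinite-order $h_0 \in G$ with fixed points off $\Lambda H$ (possible since $\Lambda H$ is nowhere dense and $G$ has infinitely many independent axes — here I would need that $G$ is non-elementary, which holds because $N$, hence $M$, is hyperbolic of finite volume with a norm-minimizing non-fiber surface, so $\pi_1$ is non-elementary), conjugate if necessary, and pass to a large power.

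The main obstacle, and the step I would spend the most care on, is the disjointness/quasiconvexity bookkeeping: ensuring that a single power $g = h_0^{N}$ simultaneously has both fixed points bounded away from $\Lambda H$ \emph{and} satisfies the ping-pong North-South dynamics condition relative to the (compact) limit set $\Lambda H$. Concretely, I would fix a metric on $\partial G$, choose an $\varepsilon$-neighborhood $U$ of $\Lambda H$ with $\overline{U} \ne \partial G$, choose an open set $V$ containing $\{h_0^{+\infty}, h_0^{-\infty}\}$ with $\overline V \cap \overline U = \varnothing$ and $\overline V \cap h(\overline V) = \varnothing$ for all $h \in H \sm \{1\}$ (possible because $\Lambda H$ is $H$-invariant and disjoint from $\overline V$, using that the $H$-action on $\partial G \sm \Lambda H$ is properly discontinuous — this is the crucial fact about quasiconvex subgroups), and then use North-South dynamics of $h_0$ to find $N$ with $h_0^{N}(\partial G \sm V) \subset V$ and $h_0^{-N}(\partial G \sm V)\subset V$, hence in particular $g^{\pm k}(\overline U) \subset V$ for all $k \ne 0$. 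Ping-pong with the partition $(U, V)$ then yields the free product. Finally I would translate the conclusion back: $\langle \pi_1(\S,p), g\rangle = \pi_1(\S,p) * \langle g \rangle$ inside $\pi_1(M,p)$, which is exactly Theorem \ref{thm:extendfreehyp}; alternatively, if a reference such as Gitik's or Arzhantseva's combination theorems for quasiconvex subgroups is available, I would simply cite it to produce $g$ with $\langle H, g\rangle = H * \langle g\rangle$ directly, since those theorems are stated precisely for an infinite-index quasiconvex subgroup of a hyperbolic group together with a suitably generic infinite-order element.
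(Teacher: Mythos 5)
Your ping-pong argument reproduces, in expanded form, only the first half of the paper's proof, and it breaks down in a case that the paper must (and does) treat separately. The manifold $N$ is hyperbolic with toroidal boundary, and nothing forces $\S$ to meet every boundary torus. If some torus $T_i\subset \partial N$ is disjoint from $\S$, then $\pi_1(T_i)\cong \Z^2$ survives as a subgroup of $\pi_1(M)$, so $G=\pi_1(M,p)$ is \emph{not} word hyperbolic, and the entire framework you rely on --- boundary $\partial G$, limit set $\Lambda H$, North--South dynamics of a loxodromic, proper discontinuity of $H$ on $\partial G\sm \Lambda H$ --- is unavailable. (Your parenthetical ``$N$, hence $M$, is hyperbolic of finite volume'' is also not right: $M$ is $N$ cut open along $\S$, a compact manifold with higher-genus boundary, not a finite-volume hyperbolic manifold.) This missing case genuinely occurs in the application: the theorem is applied to a hyperbolic JSJ piece of a knot exterior, whose surface need not touch all of its JSJ tori. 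The paper handles it by switching to \emph{relative} hyperbolicity of $\pi_1(N)$ with respect to its boundary tori, showing via Tameness and Hruska that $\pi_1(\S)$ is relatively quasiconvex, choosing $g\in\pi_1(\S)$ with $\ll g\rr\cap\pi_1(T_i)=1$, and invoking Martinez-Pedroza's combination theorem to produce $h\in\pi_1(T_i)$ with $\ll \pi_1(\S), hgh^{-1}\rr=\pi_1(\S)*\ll hgh^{-1}\rr$; both factors then visibly lie in $\pi_1(M)$.

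A secondary gap: you assert that $H=\pi_1(\S,p)$ is quasiconvex because it is $\pi_1$-injective and of infinite index, but neither property implies quasiconvexity. The essential input is that $\S$ is \emph{not a fiber}, which via the Tameness Theorem (Agol, Calegari--Gabai) and the Covering Theorem forces $\pi_1(\S)$ to be geometrically finite, hence (relatively) quasiconvex; if $\S$ were a fiber, $\pi_1(\S)$ would be an infinite normal subgroup with full limit set and your construction would have nothing to ping-pong against. In the case where $\S$ does meet every boundary component, your argument, once quasiconvexity is properly justified, is a correct unwinding of the Gromov/Arzhantseva result that the paper cites as a black box; but as written the proposal does not prove the theorem in the generality stated.
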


\begin{proof}
Let $N$ be a hyperbolic $3$-manifold. We denote by $T_1,\dots,T_k$  the boundary components of $N$. Let $\S$ be a properly embedded,
connected Thurston norm-minimizing surface that is not a fiber surface.
We write  $M = N \sm \S\times (0,1)$ and we pick a base point $p$ on $\S\times 0=\S$.
We now take all fundamental groups with respect to this base point.
It follows again from the Loop Theorem and the fact that $\S$ is Thurston norm-minimizing that  the inclusion-induced map $\G:=\pi_1(\S)\to \pi_1(M)$ is a monomorphism.
We will henceforth view $\G=\pi_1(\S)$ as a subgroup of $\pi_1(M)$.

We first suppose that $\S$ hits all boundary components of $N$.
Since $\S$ is not a fiber surface, it  follows
from  the Tameness Theorem of  Agol \cite{Ag04} and Calegari--Gabai \cite{CG06}
 that $\pi_1(M)$ is word-hyperbolic and that $\G=\pi_1(\S)$ is a quasi-convex subgroup of $\pi_1(M)$.
(We refer to  \cite[Sections~14~and~16]{Wi12a} for more details.)
It then follows from work of Gromov \cite[5.3.C]{Gr87} (see also \cite[Theorem~1]{Ar01})
that there exists an element $g\in \pi_1(M)$ such that the subgroup of $\pi_1(M)$ generated by $\G$ and $g$ is in fact the free product
of $\G$ and $\langle g \rangle$.

We now suppose that there exists a boundary component $T_i$ that is not hit by $\S$.
We  pick a path in $M$ connecting $T_i$ to the chosen base point and we henceforth view $\pi_1(T_i)$ as a subgroup of $\pi_1(M)$.
Note that $\pi_1(N)$ is hyperbolic relative to the subgroups $\pi_1(T_1),\dots,\pi_1(T_k)$.
Since $\S$ is not a fiber surface, it follows from the Tameness Theorem  and from work of Hruska \cite[Corollary~1.3]{Hr10} that $\G$ is a relatively quasi-convex subgroup of $\pi_1(N)$. Since $\G$ is a non-abelian surface group we can find an element $g\in \G$
such that $\ll g\rr\cap \pi_1(T_i)$ is trivial.
We see again from the Tameness Theorem  that $\ll g\rr$ is a relatively quasi-convex subgroup of $\pi_1(N)$.

Summarizing, we have shown that $\pi_1(\S)$ and $\ll g\rr$ are two relatively quasi-convex subgroups of $\pi_1(N)$ which have trivial intersection
with the parabolic subgroup $\pi_1(T_i)$.
It now follows from Martinez-Pedroza \cite[Theorem~1.2]{MP09} that  there exists a $h\in \pi_1(T_i)$ such that the subgroup of $\pi_1(N)$ generated by
$\G$ and $hgh^{-1}$ is the free product of $\G$ and $\ll hgh^{-1}\rr$. The proposition now follows from the observation that
according to our choices, both $\G$ and $\ll hgh^{-1}\rr$ lie in $\pi_1(M)$.
\end{proof}

We can now prove Theorem \ref{thm:extendfree}. 
For the reader's convenience we recall the statement.

\begin{theorem}\label{thm:infinitecyclic}
Let $K$ be a non-fibered knot. Then there exists a Seifert surface $\S$ of minimal genus
and a nontrivial element $g \in \pi_1(S^3 \sm \S \times (0,1))$ such that the subgroup generated by $\pi_1(\S\times  0)$ and $g$ is the free product of $\pi_1(\S\times 0)$ and the infinite cyclic group $\ll g \rr$.
\end{theorem}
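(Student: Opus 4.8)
The plan is to reduce the general (non-fibered knot) statement to the hyperbolic case already handled in Theorem~\ref{thm:extendfreehyp} by passing through the JSJ decomposition of $X(K)$ and choosing the minimal-genus Seifert surface $\S$ carefully. First I would recall that a minimal-genus Seifert surface $\S$ for $K$ is Thurston norm-minimizing for $\e_K$ and, since $K$ is non-fibered, $\S$ is not a fiber surface; by Gabai's theory we may assume $\S$ is taut and in particular may be isotoped to meet the JSJ tori of $X(K)$ transversely and minimally, so that $\S$ restricts to a (possibly disconnected) norm-minimizing surface in each JSJ piece. The crucial point is that $\e_K$ being non-fibered forces at least one JSJ piece $Y$ in which the restriction of $\S$ contains a component $\S_0$ that is \emph{not} a fiber of any fibration of $Y$ — otherwise a standard gluing argument (fibrations of the pieces matching along the tori) would exhibit $\e_K$ as fibered. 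I would isolate such a piece $Y$, noting that by geometrization $Y$ is either hyperbolic or Seifert-fibered.

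Next I would dispose of the Seifert-fibered possibility for $Y$. If every JSJ piece meeting $\S$ were Seifert-fibered this would impose strong restrictions; in fact for knot complements the standard structure theory (e.g.\ that the piece containing $\partial X(K)$ and the way norm-minimizing surfaces sit in Seifert pieces) shows that a non-fiber norm-minimizing surface in a Seifert-fibered piece is either a union of fibers of the circle bundle — hence vertical and not giving the needed behavior — or horizontal, in which case it \emph{would} be a fiber, contradicting our choice. I expect one can argue that the ``non-fiberedness'' is always witnessed in a hyperbolic JSJ piece, or alternatively handle the Seifert case by hand using the explicit $\pi_1$; this case analysis is the main obstacle, since one must be careful about surfaces that are boundary-parallel or vertical annuli and about how the pieces' fibered faces of the Thurston norm ball glue. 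Granting this, let $Y$ be a hyperbolic JSJ piece and $\S_0\subset Y$ a connected norm-minimizing, non-fiber component of $\S\cap Y$ through an appropriate base point $p$.

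Now apply Theorem~\ref{thm:extendfreehyp} to the pair $(Y,\S_0)$: cutting $Y$ along $\S_0\times(0,1)$ gives a manifold $M_Y$ containing $p$ on $\S_0$, and the theorem produces a nontrivial $g\in\pi_1(M_Y,p)$ with $\ll\pi_1(\S_0,p),g\rr=\pi_1(\S_0,p)*\ll g\rr$. The remaining work is to promote this to the global statement. Cutting $X(K)$ along $\S\times(0,1)$ yields $M=X(K)\sm\S\times(0,1)$, and $M_Y$ includes into $M$; moreover $\pi_1(\S,p)$ is built from $\pi_1(\S_0,p)$ (together with the other components and arcs of the JSJ graph) via iterated amalgamation along tori- and surface-subgroups, with $\pi_1(\S_0,p)$ sitting as a subgroup. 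Using the normal-form / ping-pong description of graphs of groups (and the fact, already used in the paper, that all the relevant subgroup inclusions are injective because $\S$ and the JSJ tori are incompressible), I would check that $g\in\pi_1(M_Y,p)\subset\pi_1(M,p)$ still satisfies $\ll\pi_1(\S,p),g\rr=\pi_1(\S,p)*\ll g\rr$: any nontrivial reduced word alternating between $\pi_1(\S,p)$ and powers of $g$ maps, under the retraction onto the $Y$-vertex data available from the tree of spaces, to a nontrivial reduced word in $\pi_1(\S_0,p)*\ll g\rr$ — here one uses that $g$ lies in the complementary piece $M_Y$ and not in any conjugate of $\pi_1(\S_0,p)$, which was exactly the conclusion of Theorem~\ref{thm:extendfreehyp}. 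Setting $\S$ as our minimal-genus surface and this $g$ completes the proof; the delicate point throughout, and the step I expect to require the most care, is the JSJ bookkeeping in the paragraph above — ensuring a \emph{non-fiber hyperbolic} piece exists and that the free-product property survives the gluing.
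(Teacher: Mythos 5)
Your strategy is exactly the paper's: pass to the JSJ decomposition of $X(K)$, locate a hyperbolic piece on which the restricted class is non-fibered, apply Theorem \ref{thm:extendfreehyp} there, and then argue that the free product survives in $\pi_1(X(K))$. The two steps you flag as delicate are, however, precisely where the content lies, and your sketches of them do not yet close. For the first: the paper does not argue via a vertical/horizontal dichotomy. It chooses a non-fibered JSJ vertex $w$ at \emph{minimal distance} from the vertex carrying $\partial X(K)$; minimality forces the restriction of $\e_K$ to the torus $T_w$ separating $X_w$ from the boundary to be non-trivial (all intervening pieces are fibered with non-zero class, and a fibered non-zero class restricts non-trivially to every boundary torus of its piece). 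Then the classification of non-hyperbolic JSJ pieces of knot exteriors (torus knot exteriors, composing spaces, cable spaces, via \cite{JS79}) shows that in each of these a class which is non-trivial on a boundary torus is automatically fibered, so $X_w$ must be hyperbolic. Without the minimal-distance choice you cannot exclude, for instance, a Seifert piece with $\e_v=0$ (vacuously ``non-fibered'' but disjoint from $\S$ and hence useless), and horizontal-versus-vertical considerations alone do not dispose of composing and cable spaces.

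The more serious gap is the promotion step. There is no retraction of $\pi_1(M)$ onto ``the $Y$-vertex data'': $\pi_1(\S,p)$ is not a vertex or edge group of the JSJ graph of groups, it meets every piece, and $g$ does not avoid conjugates of $\pi_1(\S,p)$ for formal reasons, so a one-shot ping-pong or projection argument is unavailable. What the paper actually proves is an induction on the number of times a loop representing an element of $\ker\bigl(\pi_1(\S\cup c,p_w)\to\pi_1(X,p_w)\bigr)$ crosses the JSJ tori. The reduction step requires showing that whenever a syllable $d_i$ of such a loop is homotopic in its piece into a JSJ torus $T$, it is in fact homotopic rel endpoints into $T\cap\S$. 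This rests on two non-automatic facts: an intersection-number argument showing both endpoints of $d_i$ lie on the same component of $T\cap\S_u$ (which uses that the boundary curves of each $\S_v$ on $T$ are parallel and non-null-homologous --- arranged by building $\S$ from $d_v$ parallel copies of a norm-minimizing surface in each piece), and the computation that $\pi_1((\S\cup c)\cap X_u,P)\cap\pi_1(T,P)$ is exactly the infinite cyclic group generated by a boundary curve of $\S_u$ (free group meets $\Z^2$ in a cyclic group containing that curve). Identifying these edge-group intersections is what makes the free product survive the gluing; ``normal forms'' alone do not supply it. So your outline is the right one, but both flagged steps need the specific mechanisms above to become a proof.
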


\begin{proof}
Let $K$ be a non-fibered knot. We write $X=S^3\sm \nu K$. We denote by $X_v,v\in V$, the JSJ components,
and we denote by $T_\e, \e\in E$, the JSJ tori of $X$.
We let $\e \in H^1(X;\Z)=\hom(H_1(X);\Z),\Z) \cong \Z$ be the generator that corresponds to the canonical homomorphism $\e_K\colon H_1(X;\Z)\to \Z$.
For each $v\in V$, we denote by $\e_v\in H^1(X_v;\Z)$ the restriction of $\e$ to $X_v$. 

The pair $(V,E)$ has a natural graph structure, since each JSJ torus cobounds two JSJ components. Since $X$ is a knot complement,
this graph is a based tree, where the base is  the vertex $b\in V$ for which $X_b$ contains the boundary torus.
We now denote by $T_b$ the boundary torus of $X$, and for each $v\ne b$ we denote by $T_v$ the unique JSJ torus which is a boundary component of $X_v$ and which separates $X_v$ from $X_b$.

\begin{claim}
There exists an element  $w\in V$ such that $X_w$ is hyperbolic and such that $\e_w\in H^1(X_w;\Z)$ is not a fibered class.
\end{claim}

We say that a vertex $v\in V$ is non-fibered if $\e_v\in H^1(X_v;\Z)$ is not a fibered class. Since $\e=\e_K$ is by assumption not fibered,  it follows from \cite[Theorem~4.2]{EN85} that
some vertex  is not fibered.
Let $w\in V$ be a non-fibered vertex of minimal distance to $b$.

Note that if $v\in V$ is fibered and if $\e_v$ is non-trivial, then the restriction of $\e_v$ to any boundary torus is also non-trivial.
Since $\e_b$ is non-trivial and since $w\in V$ is a non-fibered vertex of minimal distance to $b$, we conclude that the restriction of $\e_w$
to $T_w$ is non-trivial.

It follows from the Geometrization Theorem and from  \cite[Lemma~VI.3.4]{JS79} that $X_w$ is one of the following:
\bn
\item the exterior of a torus knot;
\item a `composing space', that is, a product $S^1\times W_n$, where $W_n$ is the result of removing $n$ open disjoint disks from $D^2$;
\item a `cable space', that is,  a manifold obtained from a solid torus $S^1\times D^2$  by removing an open regular neighborhood in
$S^1 \times \op{Int}(D^2$) of a simple
closed curve $c$ that lies in a torus $S^1\times s $, where $s\subset \op{Int}(D^2)$ is a simple closed curve
and $c$ is  non-contractible  in $S^1\times D^2$;
\item a hyperbolic manifold.
\en
As we argued above, the restriction of $\e_w\in H^1(X_w;\Z)$ to one of the boundary tori, namely $T_w$,  is non-trivial.
It is well known that in each of the first three cases, this would imply that
$\e_w$ is a fibered class. Hence  $X_w$ must be hyperbolic.
This concludes the proof of the claim. 

In the following, given a vertex $v$ with $\e_v$ non-zero,  we denote by $d_v\in \N$ the divisibility of $\e_v\in H^1(X_v;\Z)$. For all other vertices we write $d_v=0$.

\begin{claim}
There exists a minimal genus Seifert surface $\S$ for $K$ with the following properties:
\bn
\item $\S$ intersects each $T_e$ transversally;
\item each intersection $\S\cap T_e$ consists of a possibly empty union of parallel, non-null-homologous curves;
\item for each $v$ with $d_v\ne 0$ the surface $\S_v:=\S\cap X_v$ is the union of $d_v$ parallel copies of a surface $\S_v'$.
\en
\end{claim}

For each $v$ with $d_v\ne 0$ we pick a properly embedded Thurston norm-minimizing surface $\S_v'$ that represents $\frac{1}{d_v}\e_v$.
After possibly gluing in annuli and disks, we may assume that at each boundary torus $T$ of $N_v$, all the components of $\S_v'\cap T$ are parallel as oriented curves
and no component of $\S_v'\cap T$ is null-homologous.
We now pick a tubular neighborhood $\S_v'\times [-1,2]$ of $\S_v'$
and we denote by $\S_v$ the union of $\S_v'\times r_i$ where $r_i=\frac{i}{d_v}$ with $i=0,\dots,d_v-1$.
For each $v$ with $d_v=0$ we denote by $\S_v'=\S_v$ the empty set.

The surfaces $\S_v$ are chosen such that at each JSJ torus the boundary curves are parallel.
Since at a JSJ edge the adjacent surfaces have to represent the same homology class,  at each JSJ torus
the adjacent surfaces have exactly the same number of boundary components
which furthermore represent the same homology class in the JSJ torus. After an isotopy in the neighborhood of the tori we can therefore glue the surfaces $\S_v$ together to obtain a properly embedded surface $\S$.
Since the Thurston norm is  linear on rays, it follows from \cite[Proposition~3.5]{EN85} that $\S$ is a connected Thurston norm-minimizing surface representing $\e$. By construction,
the intersection of $\S$ with $\partial X$ consists of one curve, which is necessarily a longitude for $K$. We thus see that $\S$ is indeed a genus-minimizing Seifert surface for $K$.
It is now clear that $\S$  has the desired properties.
This concludes the proof of the claim.

Recall that $\e_w\in H^1(X_w;\Z)$ is not a fibered class. By the discussion at the beginning of this section, this implies that
$\frac{1}{d_w}\e_w$ is also not a fibered class, and so $\S_w'$ is not a fiber surface.

We pick a base point $p_w$ on $\S_w'=\S_w'\times 0$, which is then also a base point for $X_w$.
It follows from Theorem \ref{thm:extendfreehyp} that there exists an element $g\in \pi_1(X_w\sm \S_w'\times (0,1),p_w)$ such that the subgroup of $\pi_1(X_w\sm \S_w'\times (0,2],p_w)$ generated by $\pi_1(\S_w',p_w)$ and $g$ is in fact the free product of $\pi_1(\S_w',p_w)$ and $\ll g \rr $.
It now remains to prove the following claim.

\begin{claim}
The subgroup of $\pi_1(X,p_w)$ generated by $\pi_1(\S,p_w)$ and $g$ is the free product of $\pi_1(\S,p_w)$ and $\ll g \rr $.
\end{claim}

We may pick an oriented simple closed curve $c$ in $X_w\sm \S_w'\times (0,2]$ that intersects $\S_w'=\S_w'\times 0$ in precisely the base point $p_w$
and that represents $g\in \pi_1(X_w\sm \S_w'\times (0,2],p_w)$. Note that $\pi_1(\S\cup c,p_w)$ is precisely the free product of $\pi_1(\S,p_w)$ and $\ll g \rr $. 
It thus suffices to show that the inclusion-induced map
\[ \pi_1(\S\cup c,p_w)\to \pi_1(X,p_w) \]
is injective.
\begin{figure}[h]
\begin{center}
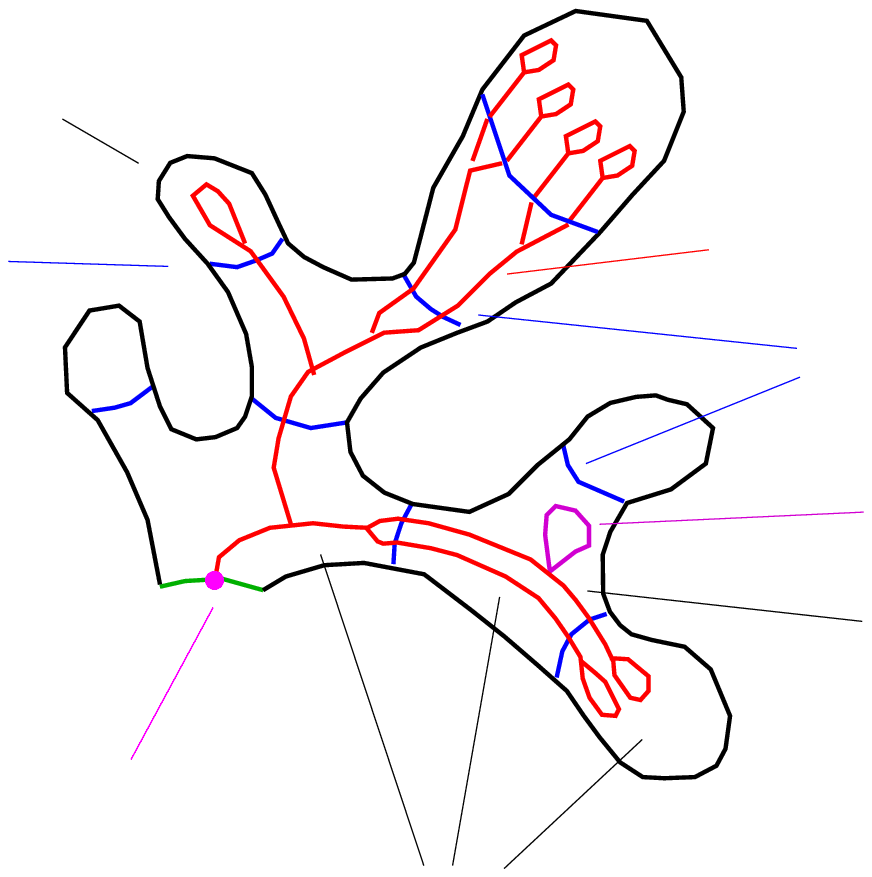
\caption{Schematic picture for the Seifert surface $\S$ and the curve $c$.}
\end{center}
\end{figure}

 Let $h$ be an element in the kernel of this map.
We pick a representative curve $d$ which intersects the JSJ tori transversally. We will show that $h$ represents the trivial element 
in $\pi_1(\S\cup c,p_w)$ by  induction on
\[ n(d):=\sum_{v\in V} \#\mbox{components of $d\cap X_v$}.\]

If $n(d)=1$,  then $d$ lies in the component of 
$(\S\cup w)\cap X_w=\S_w\cup c$ that contains $p_w$. 
Then $c$ lies completely in $\S'_w\cup c$.
But the map $\pi_1(\S'_w\cup c,p_w)=\pi_1(\S'_w,p_w)*\ll g\rr\to \pi_1(X_w)$ is injective,
and the map $\pi_1(X_w)\to \pi_1(X)$ is also injective. It thus follows that $h$ is the trivial element.

We now consider the case that $n:=n(d)>1$. We then think of $\pi_1(X)$ as the fundamental group of the graph of groups $\pi_1(X_v)$.
We can view the curve $d$ as a concatenation of curves $d_1,\dots,d_n$ such that each curve $d_i$ lies completely in some $X_{u}$.
Recall that we assume  that $d$ represents the trivial element. A standard argument in the theory of fundamental groups of graph of groups (see e.g. \cite{He87}) implies that there exists a $d_i$ with the following two properties:
\bn
\item the two endpoints of $d_i$ lie on the same boundary torus $T$ of some $X_{u}$,
\item $d_i$ is homotopic in $X_{u}$ rel endpoints to a curve $s_i$ that lies completely in $T$.
\en
Note that the two endpoints of $d_i$ lie on $T\cap \S_u$.
In fact we can prove a stronger statement.

\begin{claim}
The two endpoints of $d_i$ lie on the same component of $T\cap \S_u$.
\end{claim}

We first make the following observation.
Let $S$ be a properly oriented embedded surface $S$ in an oriented 3-manifold $M$ and let $a$ be an oriented embedded arc  that does 
not intersect $S$ at the endpoints.
 We can then associate to $S$ and $a$ the algebraic  intersection number $S\cdot a\in \Z$, which has in particular  the following two properties:
\bn
\item for any properly oriented embedded arc $b$ homotopic to $a$ rel base points we have $S\cdot a=S\cdot b$,
\item if $a$ lies completely in a boundary component $B$ of $M$, then $S\cdot a$ equals the algebraic intersection number of the oriented curve $\partial S$ with the oriented arc $a$ in $B$.
\en

We now turn to the proof of the claim. We first note that there exists a homeomorphism $r\colon X_u\to X_u$ 
which is the identity on $X_u\sm \S_u'\times (-1,2)$, which 
has the property that for any $x\times t$ with $x\in \S_u'$ and $t\in [0,1]$ we have 
\[ f(x\times t)=x\times (t-\frac{1}{2d_v})\]
and which is isotopic to the identity on $X_u$. 
More informally,  $r$ is a map that pushes everything on
 $\S\times [0,1]$ slightly to the left.
Note that $r$ pushes everything on $\S_u$ off $\S_u$.
Furthermore, if $u=w$, then the intersection of $r(\S_w\cup c)$ 
with $\S_w$ is also empty. 

Since $s_i$ and $d_i$ are homotopic rel base points and since $r$ is homotopic to the identity, the curves $r(s_i)$ and $r(d_i)$ are homotopic rel base points. It follows from the above that $\S_u\cdot r(s_i)=\S_u\cdot r(d_i)$.
But the latter is clearly zero, since $r(d_i)$ does not intersect $\S_u$. 
We now conclude that $\partial \S_u \cdot r(s_i)=\S_u\cdot r(s_i)=0$.
Since the curves $\partial \S_u\cap T$ are all parallel it now follows that $r(s_i)$ does not intersect $\S_u\cap T$ at all.
But this means  that the two endpoints of $s_i$, and thus also the two endpoints of $d_i$, have to lie on the same component of $T\cap \S_u$.
This concludes the proof of the claim.

We then make the following claim.

\begin{claim}
The curve $d_i$ is homotopic in $X_{u}$ rel end points to a curve $d_i'$ that lies completely in $T\cap \S_u$.
\end{claim}

By the previous claim we know that  the two endpoints of $d_i$ lie on the same component of $T\cap \S_u$.
We denote the initial point of $d_i$ by $P$, and the terminal point by $Q$.
We denote by $r$ the component of $\partial \S_u$ that contains $P$. We endow $r$ with an orientation.
Note that $r$ is homologically essential on $T$. The curve $r$ thus defines a subsummand $\ll r\rr$ of $\pi_1(T,P)\cong \Z^2$.

We also pick a curve $t_i$ in $T\cap X_u$ from $P$ to $Q$.
The concatenation $s_it_i^{-1}$ lies in $T$, and also lies in $(\S\cup c)\cap X_u$.
The curve $s_it_i^{-1}$ thus represents an element in $\pi_1((\S\cup c)\cap X_u,P)\cap \pi_1(T,P)$.
But the group $\pi_1((\S\cup c)\cap X_u,P)$ is free (regardless of whether $c$ lies on the $P$-component of $(\S\cup c)\cap X_u$ or not) whereas $\pi_1(T,P)\cong \Z^2$.
The two groups thus intersect in an infinite cyclic subgroup. Furthermore, the intersection contains the subsummand $\ll r\rr$. It follows that the intersection equals $\ll r\rr$.
In particular, $s_i^{-1}t_i$ is homotopic rel $P$ to $r^k$ for some $k$. It now follows that relative to the end points we have the following homotopies:
\[ d_i\sim d_is_i^{-1}s_i\sim s_i\sim s_it_i^{-1}t_i\sim r^kt_i.\]
But the curve $d_i':=r^kt_i$ lies completely in $T\cap \S_u$.
This concludes the proof of the claim.

We can thus replace $d=d_1\dots d_{i-1}d_id_{i+1}\dots d_l$ by $d_1\dots d_{i-1}d_i'd_{i+1}\dots d_l$
and push $d_i'$ slightly into the adjacent JSJ component of $X$. We have found a representative of $h$ of smaller length than $d$.
The claim that $h$ represents the trivial element now follows by induction.

This concludes the proof  that the subgroup of $\pi_1(X\sm \S\times (0,2],p_w)$ generated by $ \pi_1(\S,p_w)$ and $g$ is the free product of $\pi_1(\S,p_w)$ and $\ll g \rr $.
We are therefore done with the proof of Theorem \ref{thm:infinitecyclic}.
\end{proof}

\section{Comparison with Stallings's fibering criterion}\label{section:stallings}

Let $K$ be a knot. Recall that we denote by $\e_K\co \pi(K)\to \Z$ the unique epimorphism that sends the oriented meridian to 1.
Stallings \cite{St62} proved the following theorem.

\begin{theorem}\label{thm:st62}
If $K$ is not fibered, then  $\ker(\e_K)$ is not finitely generated.
\end{theorem}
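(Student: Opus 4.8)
\textbf{Proof proposal for Theorem \ref{thm:st62}.}
The plan is to prove the contrapositive: if $\ker(\e_K)$ is finitely generated, then $K$ is fibered. The key bridge is Lemma \ref{lem:fibsplit}'s companion, Lemma \ref{lem:splitkerfg}, together with the general principle that a splitting of $(\pi(K),\e_K)$ can be realized geometrically by an incompressible Seifert surface. So suppose $B := \ker(\e_K)$ is finitely generated. Since $\pi(K)$ is finitely presented, Lemma \ref{lem:splitkerfg} gives the canonical splitting $\pi(K) = \ll B, t \mid \varphi(B) = tBt^{-1} \rr$ with $\varphi$ conjugation by $t$; in particular $\varphi$ is an \emph{isomorphism} $B \to B$, not merely an injection, and $B$ itself is finitely generated.

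First I would recall from Section \ref{section:splitk} that splitting $\pi(K)$ along a minimal-genus (hence incompressible) Seifert surface $\S$ gives a splitting of $(\pi(K),\e_K)$ over $\pi_1(\S)$ with base group $A = \pi_1(X(K) \sm \S \times (-1,1))$ and amalgamating map $\varphi_\S \colon \pi_1(\S\times -1) \to A$. Decomposing $\ker(\e_K)$ as the infinite amalgamated product $\cdots A_{-1} \ast_{B_{-1}} A_0 \ast_{B_0} A_1 \cdots$ along the copies of $\pi_1(\S)$, the hypothesis that this direct limit is finitely generated forces, by the Neuwirth-style argument used in the proof of Lemma \ref{lem:splitkerfg} (cf. \cite{Ne65}), that each edge inclusion $\pi_1(\S) \to A$ is surjective, i.e. both inclusion-induced maps $\pi_1(\S \times \pm 1) \to \pi_1(X(K) \sm \S\times(-1,1))$ are isomorphisms.

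Next I would upgrade this $\pi_1$-statement to a topological fibration. Let $M = X(K) \sm \S\times(-1,1)$, a compact irreducible $3$-manifold with $\partial M$ containing the two copies $\S\times\{\pm 1\}$ glued along an annulus neighborhood of $\partial K$. We have inclusions of the incompressible surface $\S$ into $M$ from both sides inducing $\pi_1$-isomorphisms. By Waldhausen's theorem on Haken manifolds (the relative version: a $\pi_1$-isomorphism between an incompressible boundary surface and the ambient Haken manifold is realized by a homeomorphism $M \cong \S \times [-1,1]$), $M$ is a product $\S \times [-1,1]$. Gluing $\S\times -1$ to $\S\times 1$ then exhibits $X(K)$ as a surface bundle over $S^1$ with fiber $\S$, so $K$ is fibered, contradicting the hypothesis. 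Running this argument in reverse of the contrapositive gives the theorem.

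The main obstacle is the passage from the algebraic conclusion (both edge maps $\pi_1(\S) \to \pi_1(M)$ are isomorphisms) to the topological conclusion ($M$ is a product), which requires invoking Waldhausen's theorem and checking its hypotheses carefully — in particular irreducibility of $X(K)$ and incompressibility of the copies of $\S$ in $\partial M$, and handling the boundary pattern coming from $\partial K$. A secondary point needing care is the purely group-theoretic step that finite generation of the infinite amalgamated product forces surjectivity of the edge maps; this is exactly the lemma of \cite{BS78}/Neuwirth already cited in the proof of Lemma \ref{lem:splitkerfg}, so it can be quoted rather than reproved. (Alternatively, one can bypass $3$-manifold topology entirely: the result is Stallings's original theorem \cite{St62}, proved via the algebraic criterion that $\pi(K)$ fibers over $\Z$ iff $\ker(\e_K)$ is finitely generated, combined with the fact that a finitely generated normal subgroup of a knot group with infinite cyclic quotient is a surface group — but the $3$-manifold route above is the cleaner write-up given what is already set up in the paper.)
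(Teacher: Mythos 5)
Your proof is correct, but it is not the route the paper takes: the paper does not reprove Theorem \ref{thm:st62} at all. It attributes the result to Stallings \cite{St62} and then merely observes that it also \emph{follows} from the paper's own machinery: by Lemma \ref{lem:splitkerfg}, if $\ker(\e_K)$ were finitely generated then $\pi(K)$ would split over precisely one group up to strong equivalence, which is incompatible with either Theorem \ref{thm:fibsplit} (splittings over non-isomorphic, non-free groups of arbitrarily large rank) or Theorem \ref{freesplitting} (splittings over free groups of every rank $\geq 2g$) when $K$ is non-fibered. What you give instead is a genuine, self-contained proof -- essentially Stallings's original argument as organized in \cite[Chapter~10]{He76}: apply the Neuwirth/Bieri--Strebel finite-generation argument (the same one used inside the proof of Lemma \ref{lem:splitkerfg}) to the splitting of $(\pi(K),\e_K)$ coming from a minimal-genus Seifert surface $\S$ to conclude that both inclusions $\pi_1(\S\times\pm1)\to\pi_1(X(K)\sm\S\times(-1,1))$ are isomorphisms, then invoke the Stallings/Waldhausen product theorem to recognize the cut-open manifold as $\S\times I$ and reglue to get a fibration. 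The trade-off is clear: the paper's deduction is two lines but rests on its main theorems, whose proofs are much deeper (separability via Przytycki--Wise, etc.) and which themselves quote \cite[Theorem~10.5]{He76} -- i.e., the substance of Stallings's theorem -- to produce an element $g\in A\sm B$; so as an independent proof the paper's remark is really a consistency check, whereas yours is the honest classical argument. Your two flagged subtleties are the right ones and both are handled by the standard references: the finite-generation-forces-surjectivity step is exactly the $C_i=D_i=\psi(D_{i-1})$ conclusion in the proof of Lemma \ref{lem:splitkerfg} (cf. \cite{Ne65}), and the product-recognition step is \cite[Theorem~10.2]{He76}, whose hypotheses (irreducibility of the cut-open manifold, incompressibility of $\S\times\{\pm1\}$ in its boundary) hold because $X(K)$ is irreducible and $\S$ is genus-minimizing.
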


It follows from Lemma \ref{lem:splitkerfg} that if 
  $\ker(\e_K)$ is finitely generated, then there exists precisely one group $B$ such that $\pi(K)$ splits over $B$.
Thus Stalling's theorem follows as a consequence of either Theroem \ref{thm:fibsplit} or Theorem \ref{thm:splitfreelarge}.

On the other hand, a group $\pi$ with an epimorphism $\e\co \pi\to \Z$ such that $\ker(\e)$ is not finitely generated may still split over a unique group.   
The Baumslag-Solitar group, the semidirect product $\Z\ltimes \Z[\frac{1}{2}]$ where $n\in \Z$ acts on $\Z[\frac{1}{2}]$ by multiplication by $2^n$, has abelianization $\Z$. The kernel of the abelianization $\e:\pi\to \Z$ is the infinitely generated subgroup $\Z[\frac{1}{2}]$. Since every finitely generated subgroup of
$\Z[\frac{1}{2}]$ is isomorphic to $\Z$, $\Z\ltimes \Z[\frac{1}{2}]$ splits only over subgroups isomorphic to $\Z$. (In fact, any two splittings are easily seen to be strongly equivalent.) 
This shows that the conclusions of Theorems \ref{thm:fibsplit} and \ref{thm:splitfreelarge} are indeed stronger than the conclusion of Theorem \ref{thm:st62}.

Stallings's fibering criterion has been generalized in several other ways.
For example, if $K$ is not fibered, then $\ker(\e)$ can be written neither as a descending nor as an ascending HNN-extension \cite{BNS87}, $\ker(\e)$ admits uncountably many subgroups of finite index (see \cite[Theorem~5.2]{FV12c}, \cite{SW09a} and \cite[Theorem~3.4]{SW09b}),
the pair $(\pi(K),\e_K)$ has `positive rank gradient' (see \cite[Theorem~1.1]{DFV12})
and $\ker(\e_K)$ admits a finite index subgroup which is not normally generated by finitely many elements (see \cite[Theorem~5.1]{DFV12}).

\section{Proof of Theorem \ref{mainthm}}

In this section we will prove Theorem \ref{mainthm}, i.e. we will show that if $K$ is a knot, then $\pi(K)$ does not split over a group of rank less than $2g(K)$. 
We will first give a `classical' proof for genus-one knots
before we provide the proof for all genera.

\subsection{Genus-one knots}\label{section:genusone}

In this subsection we prove:

\begin{theorem}\label{thm:genus1}
If $K$ is a genus-one knot, then $\pi(K)$ does not split 
over a free group of rank less than two.
\end{theorem}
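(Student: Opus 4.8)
The plan is to reduce the statement to classical 3-manifold topology via the Kneser Conjecture. Suppose for contradiction that $\pi(K)$ splits over a group $B$ with $\rk(B) < 2$, i.e. $B$ is either trivial or infinite cyclic. If $B$ were trivial, then $\pi(K)$ would be a nontrivial free product or $\Z$; but knot groups have infinite cyclic center only for torus knots and in any case $\pi(K)$ is freely indecomposable since $X(K)$ is irreducible (this follows from the Kneser Conjecture: a free product decomposition of $\pi_1$ of an irreducible, sufficiently large $3$-manifold with aspherical summands would have to be realized by a connected sum decomposition, which is impossible for $S^3 \sm \nu K$). So we may assume $B \cong \Z$, and $\pi(K) = \ll A, t \mid tBt^{-1} = \varphi(B)\rr$ is an HNN extension over $\Z$, with canonical epimorphism equal to $\e_K$ by Lemma \ref{lem:splitkerfg}'s setup (after adjusting $t$).

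The next step is to realize this splitting geometrically. An HNN splitting of $\pi_1(X(K))$ over $\Z$ corresponds, by the work of Waldhausen \cite{Wal68b} on sufficiently large ($=$ Haken) $3$-manifolds together with the Kneser Conjecture (in the HNN/torus-decomposition form), to a properly embedded two-sided incompressible surface $S \subset X(K)$ whose underlying $\Z$-worth of $\pi_1$ forces $S$ to be an annulus or a disk. Since $X(K)$ is irreducible and $\partial$-irreducible (the knot is nontrivial, as genus $1$), there is no essential disk, so $S$ is an essential (incompressible, $\partial$-incompressible, non-$\partial$-parallel) annulus. Cutting $X(K)$ along this annulus recovers the base group $A$ on the pieces. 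The crucial point is that the dual class of $S$ in $H^1(X(K);\Z)$ must be $\e_K$ (this is what ``splitting of $(\pi,\e_K)$'' means), so $S$ is a norm-minimizing representative of $\e_K$ in the sense that it has no closed components and its complexity is at most that of any Seifert surface.

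Now I would derive the contradiction from the Thurston norm. A Seifert surface of minimal genus for a genus-one knot has $\chi = 1 - 2g(K) = -1$, so $x_{X(K)}(\e_K) = 2g(K)-1 = 1$. But an annulus $S$ dual to $\e_K$ has $\chi(S) = 0$, hence contributes $0$ to the Thurston norm (annuli are norm-zero), which would give $x_{X(K)}(\e_K) = 0$, contradicting $x_{X(K)}(\e_K) = 1$. Slightly more carefully: an essential annulus in $X(K)$ with boundary on $\partial X(K)$ either has both boundary curves meridians, both longitudes, or both of some other slope; for it to be dual to $\e_K$ (which is nonzero on the meridian and zero on the longitude) its boundary slope must be the longitude, so $\partial S$ is two parallel longitudes — but then $S$ together with the core annulus in $\nu K$ caps off, and one checks $\e_K$ would be a fibered class (the annulus would be a fiber), forcing $K$ to be fibered of genus... this cannot happen as $\chi$-considerations again give $2g(K)-1 \le 0$. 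Either way the norm inequality $0 \ge x_{X(K)}(\e_K) = 1$ is absurd.

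\textbf{Main obstacle.} The delicate step is the geometric realization: passing from an abstract HNN splitting over $\Z$ to an embedded essential annulus dual to the prescribed class $\e_K$, rather than merely some embedded surface with cyclic fundamental group data. This requires invoking the Kneser Conjecture to rule out the trivial-amalgamating-subgroup case and the free-product case, and then Waldhausen's theorem (or the sphere/annulus theorems together with torus decomposition) to promote the splitting to an incompressible surface; one must also check the annulus cannot be $\partial$-parallel (which would make the splitting trivial) and that its dual class is genuinely $\e_K$. I expect this is exactly why, as the authors remark, ``the classical methods of 3-manifold topology do not suffice'' for higher genus — there the amalgamating subgroup is a higher-rank free group, no longer the $\pi_1$ of a simple surface like an annulus, so the clean Thurston-norm contradiction is unavailable without the input of \cite{FV12a}.
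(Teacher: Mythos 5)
Your rank-zero case is fine and is essentially the paper's argument (primeness of knot complements plus the Kneser Conjecture). The gap is in the rank-one case, at exactly the step you flag as delicate: you assert that the essential annulus $S$ produced from the splitting over $\Z$ "must be dual to $\e_K$," and your entire Thurston-norm contradiction rests on that. This is unjustified, and in fact no such annulus can exist — as your own norm computation shows, an annulus dual to $\e_K$ would force $x_{X(K)}(\e_K)=0$, which is false for every nontrivial knot. The Annulus Theorem (Theorem \ref{thm:annulus}, from Waldhausen via Scott) only guarantees \emph{some} essential properly embedded annulus in $X(K)$; it gives no control over its homology class, and the annuli that actually arise (the decomposing annulus of a composite knot, with meridional boundary and separating, or the cabling annulus, with boundary of cabling slope) are not dual to $\e_K$. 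More generally, the paper explicitly warns in the introduction that the Feustel--Gregorac realization of splittings by embedded surfaces fails for surfaces with boundary, so you cannot promote an algebraic HNN splitting over $\Z$ to an embedded annulus carrying the class $\e_K$. If your realization step were valid, it would prove that no nontrivial knot group splits over $\Z$, which is stronger than Theorem \ref{mainthm} allows you to expect from classical methods.

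The paper closes this gap without ever relating the annulus to $\e_K$: from the mere existence of an essential annulus it deduces, via Lemma 2 of \cite{Ly74a} (a consequence of \cite{Wal68a}), that $K$ is either composite or a nontrivial cable knot; additivity of genus then gives $g\ge 2$ in the composite case, and Schubert's theorem gives $g\ge 2$ in the cable case, contradicting $g(K)=1$. To repair your write-up, replace the "dual class" and Thurston-norm paragraph with this classification-plus-genus-bound argument.
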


The main ingredients in the proof are  
 two classical results from 3-manifold topology.
First, we recall the statement of the Kneser Conjecture,  which
 was first proved by Stallings \cite{St59} in the closed case, and by
Heil \cite[p.~244]{Hei72} in the bounded case.

\begin{theorem} \textbf{\emph{(Kneser Conjecture)}}\label{thm:kneserconj}
Let $N$ be a   $3$-mani\-fold with incompressible boundary.
If there exists an isomorphism $\pi_1(N)\cong \G_1*\G_2$, then there exist compact, orientable $3$-manifolds $N_1$ and $N_2$
with $\pi_1(N_i)\cong \G_i$, $i=1,2$ and $N\cong N_1\# N_2$.
\end{theorem}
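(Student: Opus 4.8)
The plan is to realize the algebraic free product decomposition geometrically, cut $N$ along the resulting surface, and read off a connected sum; the classical inputs are the Loop Theorem and the Sphere Theorem, together with Kneser's finiteness theorem and the uniqueness of free–product decompositions. This is in essence the argument of Stallings in the closed case and of Heil in the bounded case.

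First I would pass to the Bass--Serre tree $T$ of the splitting $\pi_1(N)=\G_1*\G_2$: it is contractible, $\pi_1(N)$ acts on it with a single orbit of edges, all edge stabilizers trivial, and vertex stabilizers the conjugates of $\G_1$ and of $\G_2$. Since $T$ is contractible one builds a $\pi_1(N)$--equivariant map $\tilde f\colon\tilde N\to T$ on universal covers, homotoped to be transverse to the midpoints of the edges. Then $\tilde f^{-1}(\{\text{midpoints}\})$ is a $\pi_1(N)$--invariant properly embedded surface in $\tilde N$, descending to a \emph{compact} properly embedded surface $S\subset N$ (there is only one edge orbit); because the edge stabilizers are trivial, each component of the preimage maps homeomorphically onto its image, so each component $S_0$ of $S$ lifts to $\tilde N$, i.e. $\pi_1(S_0)\to\pi_1(N)$ is trivial. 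Fixing $f$ in its homotopy class, and the transversality, so that $S$ has minimal Haken--Kneser complexity, I would then simplify $S$ with the two standard tools. A component of $\partial S$ is null--homotopic in $N$, hence, since $\partial N$ is incompressible, bounds a disk in $\partial N$, and an innermost such disk yields a boundary compression of $S$ realized by a homotopy of $f$ that lowers $|\partial S|$; so minimality forces $S$ to be closed. A positive--genus component $S_0$ carries an essential simple closed curve that is null--homotopic in $N$, hence, by the Loop Theorem, bounds an embedded compressing disk meeting $S$ only in its boundary, and compressing along it lowers the total genus while keeping every component $\pi_1$--trivial (the compressed surface lies in a neighbourhood of $S_0$ together with the disk); so minimality forces every component of $S$ to be a $2$--sphere, and each is essential (a sphere bounding a ball could be deleted). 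Finally $S\neq\varnothing$, for otherwise $\tilde f(\tilde N)$ would lie in the star of a single vertex and $\pi_1(N)$ would fix a vertex of $T$, impossible for a nontrivial free product on its Bass--Serre tree.

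With $S$ a non-empty system of essential $2$--spheres, cutting $N$ along $S$ and capping the resulting boundary spheres by balls yields compact $3$--manifolds $\widehat N_1,\dots,\widehat N_c$ with $N\cong\widehat N_1\#\cdots\#\widehat N_c$, up to connected sum with finitely many copies of $S^1\times S^2$ (or the twisted bundle), and correspondingly $\pi_1(N)\cong\pi_1(\widehat N_1)*\cdots*\pi_1(\widehat N_c)*F_r$; in particular each $\pi_1(\widehat N_i)$ is a free factor of $\G_1*\G_2$. Moreover the vertex types of $T$ induce a $2$--colouring of the pieces of $N\sm S$ in which the two sides of every sphere of $S$ are coloured differently, so each $\pi_1(\widehat N_i)$ is also conjugate into $\G_1$ or into $\G_2$. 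Refining all factors (and $\G_1,\G_2$ themselves) into their freely indecomposable and infinite--cyclic constituents and invoking the uniqueness of the decomposition of a group into such a free product (Grushko / Kurosh) together with Kneser's finiteness theorem, one matches the constituents coming from a prime decomposition of $N$ with those of $\G_1$ and of $\G_2$, consistently with the colouring; regrouping the prime summands of $N$ according to which of $\G_1,\G_2$ their fundamental group is a factor of produces $N_1$ and $N_2$ with $\pi_1(N_i)\cong\G_i$ and $N\cong N_1\#N_2$.

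I expect this last regrouping to be the main obstacle: passing from ``$S$ is \emph{some} sphere system compatible with the free product'' to ``the pieces realize exactly $\G_1$ and $\G_2$'' genuinely needs uniqueness of free--product decompositions and Kneser finiteness, and it presupposes that the prime pieces with non-trivial, non--$\Z$ fundamental group are freely indecomposable --- which is part of the assertion itself and so must be organized as an induction on the number of prime summands. In the prime case it follows from the Sphere Theorem: an orientable irreducible $3$--manifold with infinite fundamental group is aspherical, and a $PD_3$ group, being torsion free of finite cohomological dimension and one--ended, cannot be a non-trivial free product. The bounded case additionally requires the relative ($PD_3$--pair) versions of these statements, and the hypothesis that $\partial N$ be incompressible is used both to make $S$ closed above and, more fundamentally, for the conclusion to hold at all.
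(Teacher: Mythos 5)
The paper does not prove this statement at all: it is quoted as a classical result, with the closed case attributed to Stallings \cite{St59} and the bounded case to Heil \cite{Hei72}, so there is no in-paper argument to compare against. Your outline is, in substance, the standard proof that those references give: realize the free-product splitting by a dual surface (you do this via an equivariant map to the Bass--Serre tree; Stallings and Heil equivalently use a map to a wedge $K_1\vee K_2$ and take the preimage of the wedge point), use incompressibility of $\partial N$ plus innermost-disk arguments to make the surface closed, the Loop Theorem to compress it down to a nonempty system of $\pi_1$-trivial essential spheres, and then recover the connected sum. The one place where you diverge from the classical treatment is the step you yourself flag as the main obstacle: matching the resulting pieces with $\Gamma_1$ and $\Gamma_2$. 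You propose to do this a posteriori via prime decomposition, Kneser finiteness, and uniqueness of free-product decompositions (which forces you into the auxiliary claim that the prime pieces are freely indecomposable, needing the Sphere Theorem and, in the bounded case, $PD_3$-pair arguments). Stallings's original argument short-circuits this with the ``binding ties'' surgery: one modifies the map within its homotopy class, reducing the number of spheres until the complementary pieces map $\pi_1$-surjectively onto the respective factors, so that no appeal to Grushko uniqueness or to free indecomposability of the summands is needed. Both routes work; yours is heavier on group theory, the classical one is heavier on controlled surgery of the map, and for the purposes of this paper either would be an acceptable substitute for the citation.
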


In the following, we say that a properly embedded 2-sided annulus $A$ in a 3-manifold $N$ is \emph{essential} if the inclusion map $A \hookrightarrow N$ induces a $\pi_1$-injection and if $A$ is not properly homotopic into $\partial N$.  The second classical result we will use is the following,
which is a direct consequence of a theorem of Waldhausen \cite{Wal68b} (see Corollary 1.2(i) of \cite{Sco80}).

\begin{theorem}\label{thm:annulus}
Let $N$ be an irreducible 3-manifold with incompressible boundary.
If $\pi_1(N)$ splits over $\Z$, then $N$ contains an essential, properly embedded 2-sided annulus.
\end{theorem}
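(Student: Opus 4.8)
The plan is to realize the given algebraic splitting by a surface and then read off an annulus from it. By Bass--Serre theory, the hypothesis that $\pi_1(N)$ splits over $\Z$ yields a nontrivial, inversion-free action of $\pi_1(N)$ on a tree $T$ with a single orbit of edges and with edge stabilizers infinite cyclic. Since $T$ is contractible there is a $\pi_1(N)$-equivariant map $\tilde f\colon \widetilde N\to T$ from the universal cover, and after a small perturbation we may take $\tilde f$ transverse to the midpoints of the edges of $T$. Then $\Sigma:=\tilde f^{-1}(\{\text{midpoints of edges of }T\})$ is a $\pi_1(N)$-invariant, properly embedded surface in $\widetilde N$; it is $2$-sided, because on each component a normal co-orientation is determined by a choice of direction along the relevant edge and the stabilizer of that edge, acting without inversions, fixes the edge pointwise. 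Hence $\Sigma$ descends to a properly embedded $2$-sided surface $S_1\subset N$, and $S_1$ is nonempty: were it empty, the connected set $\tilde f(\widetilde N)$ would avoid all edge-midpoints and therefore lie in the star of a single vertex, forcing $\pi_1(N)$ to fix that vertex and making the splitting trivial.

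Next I would simplify $S_1$. Using the Loop Theorem and Dehn's Lemma one finds a compressing disk for $S_1$ in $N$ whenever $S_1$ is compressible; lifting it to $\widetilde N$, taking its $\pi_1(N)$-orbit, and surgering $\Sigma$ equivariantly reduces the complexity of $S_1$. Iterating this, and using irreducibility of $N$ together with incompressibility of $\partial N$ to discard sphere components and boundary-parallel disk components, one reaches a $2$-sided, properly embedded, incompressible surface $S\subset N$ with no sphere or boundary-parallel disk components. The essential point --- and the step I expect to be the main obstacle --- is that these modifications do not destroy the splitting: $S$ remains nonempty and continues to carry a nontrivial splitting of $\pi_1(N)$ over a subgroup of an edge group. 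This is exactly the classical realization theorem of Waldhausen \cite{Wal68b}, packaged as \cite[Corollary~1.2(i)]{Sco80}; what we use is that a nontrivial splitting of the fundamental group of a compact, orientable, irreducible $3$-manifold with incompressible boundary is carried by an incompressible surface, and that the complexity-reduction terminates.

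Now I would analyze the components of $S$. Fix a component $S_0$ and a component $\widetilde S_0$ of its preimage in $\widetilde N$. Any loop in $\widetilde S_0$ bounds a disk in $\widetilde N$, hence projects to a null-homotopic loop in $N$; since $S_0$ is incompressible this loop already bounds in $S_0$, so $\widetilde S_0$ is simply connected and $\pi_1(S_0)$ is isomorphic to the stabilizer of $\widetilde S_0$ in $\pi_1(N)$. Because $\tilde f(\widetilde S_0)$ is the midpoint of a single edge of $T$, this stabilizer lies in a conjugate of the edge group, so $\pi_1(S_0)$ is trivial or infinite cyclic. The trivial case makes $S_0$ a disk or a sphere, which has been removed; hence $\pi_1(S_0)\cong\Z$, so $\chi(S_0)=0$ and $S_0$ is an annulus or a M\"obius band. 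A $2$-sided properly embedded M\"obius band cannot occur in the orientable manifold $N$ (its normal bundle would be nontrivial), so every component of $S$ is an annulus.

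Finally, $S$ is a nonempty, incompressible, properly embedded $2$-sided surface all of whose components are annuli. If every component were properly homotopic into $\partial N$, then $S$ could be isotoped off itself into a collar of $\partial N$ and discarded, so it would carry only a trivial splitting, contradicting what the second step guarantees. Hence at least one component of $S$ is incompressible and not properly homotopic into $\partial N$, i.e.\ an essential, properly embedded, $2$-sided annulus in $N$. Apart from the Waldhausen realization step invoked in the second paragraph, all of this is elementary; that step is where the genuine $3$-manifold topology lies.
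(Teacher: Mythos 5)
The paper offers no proof of this statement beyond citing Waldhausen via Corollary~1.2(i) of \cite{Sco80}; your argument is precisely the standard proof behind that citation, and you correctly isolate the realization theorem (that the splitting is carried by a nonempty essential incompressible surface) as the one nontrivial input, attributing it to the same sources. The surrounding steps --- duality with the Bass--Serre tree, the identification of the components as annuli from the fact that edge stabilizers are infinite cyclic, and the exclusion of M\"obius bands and disks by orientability and boundary-incompressibility --- are all correct.
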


We turn to the proof of Theorem \ref{thm:genus1}.

\begin{proof}[Proof of Theorem \ref{thm:genus1}]
Let $K$ be a genus-one knot. Since $K$ is non-trivial, the Loop Theorem implies that  $\partial X(K)$ is incompressible.
Since knot complements are prime 3-manifolds, it now follows from the Kneser Conjecture that $\pi(K)$ can not split over the trivial group, i.e. $\pi(K)$ 
cannot split over a free group of rank zero.

Now suppose that $J$ is a non-trivial knot such that  $\pi(J)$ splits over a free group of rank one, that is, over a group isomorphic to $\Z$.
From   Theorem \ref{thm:annulus} we  deduce that  $X(J)$ contains an essential,
properly embedded, 2-sided annulus $A$. Lemma 2 of \cite{Ly74a} (an immediate consequence of \cite{Wal68a}) implies that the knot $J$ is either a composite or a nontrivial cable knot. If $J$ is a composite knot, then it follows from the additivity of the knot genus
(see, for example, \cite[p.~124]{Ro90}) that the genus of $J$ is at least two. 
Moreover, a well-known result of Schubert \cite{Sct53} (see Proposition 2.10 of \cite{BZ85}) implies that the genus of any cable knot is greater than one. Thus in both cases we see that $g(J)\geq 2$. 

We now see that for the genus-one knot $K$ the group $\pi(K)$ cannot split over  a free group of rank one.
 \end{proof}

\subsection{Wada's invariant}\label{section:wada}

For the proof of Theorem \ref{mainthm3} we will need Wada's invariant,
which is also known as the twisted Alexander polynomial or the twisted Reidemeister torsion of a knot.

We introduce the following convention. If $\pi$ is a group and $\g:\pi\to \gl(k,R)$ a representation over a ring,
then we denote by $\g$ also the  $\Z$-linear extension of $\g$ to  a map $\Z[\pi] \to M(k,R)$.
Furthermore, if $A$ is a matrix over $\Z[\pi]$ then we denote by $\g(A)$ the matrix given by applying
$\g$ to each entry of $A$.

Let $\pi$ be a group, $\e\co \pi\to \Z$ an epimorphism, and $\a \co \pi\to \gl(k,\C)$ a representation.
First note that $\a$ and $\e$ give rise to a tensor representation
\[ \begin{array}{rcl} \a \otimes \e \co \pi &\to & \gl(k,\ct) \\
g&\mapsto & t^{\e(g)}\cdot \a(g).\end{array} \]
Now let
\[ \pi=\langle g_1,\dots,g_{k} \,|\, r_1,\dots,r_{l}\rangle\]
be a presentation of $\pi$. By adding trivial relations if necessary, we may assume that $l\geq k-1$.
We denote by $F_{k}$ the free group with generators $g_1,\dots,g_{k}$.
Given $j\in \{1,\dots,k\}$ we denote by  $\frac{\partial }{\partial g_j}\co \Z[F_{k}]\to \Z[F_{k}]$ the Fox derivative with respect to $g_j$, i.e. the unique $\Z$-linear map such that
\begin{eqnarray*}
\frac{\partial g_i}{\partial g_j}&=&\delta_{ij},\\
\frac{\partial uv}{\partial g_j}&=&\frac{\partial u}{\partial g_j}+u\frac{\partial v}{\partial g_j}
\end{eqnarray*}
for all $i,j \in \{1,\dots,k\}$ and $u,v\in F_{k}$.
We denote by
\[M:=\left(\frac{\partial r_i}{\partial g_j}\right)\]
the $l\times k$-matrix over $\Z[\pi]$ of all the Fox derivatives of the relators.
Given subsets $I=\{i_1,\dots,i_r\}\subset \{1,\dots,k\}$ and
$J=\{j_1,\dots,j_s\}\subset \{1,\dots,l\}$ we denote by $M_{J,I}$ the  matrix formed by deleting the columns
$i_1,\dots,i_r$ and by deleting the rows $j_1,\dots,j_s$ of $M$.

Note that there exists at least one $i\in \{1,\dots,k\}$ such that $\e(g_i)\ne 0$. It follows that
\[ \det((\a\otimes \e)(1-g_i))=\det\left(\id_k-t^{\e(g_i)}\a(g_i)\right)\ne 0.\]
We define
\[ Q_i:=\mbox{gcd}\{\det((\a\otimes \e)(M_{J,\{i\}}))\,|\, J\subset \{1,\dots,l\} \mbox{ with }|J|=l+1-k\}.\]
(Note that each $M_{J,\{i\}}$ is a $(k-1)\times (k-1)$-matrix.) 
It is worth considering the special case that  $l=k-1$; that is, the case of a presentation of deficiency one. Then the only choice for $J$ is the empty set, and hence
\[ Q_i=\det((\a\otimes \e)(M_{\emptyset,\{i\}})).\]
Wada \cite{Wad94} introduced the following invariant of the triple $(\pi,\e,\a)$.
\[ \Delta_{\pi,\e}^\a :=Q_i\cdot \det((\a\otimes \e)(1-g_i))^{-1}\in \C(t).\]
A priori, Wada's invariant depends on the various choices we made.
The following theorem proved by Wada \cite[Theorem~1]{Wad94} shows that the indeterminacy is well controlled.

\begin{theorem}\label{thm:wadadefined}
Let $\pi$ be a group, let $\e\co \pi\to \Z$ be an epimorphism, and let $\a \co \pi\to \gl(k,\C)$
 be a representation.
Then  $\Delta_{\pi,\e}^\a$ is well-defined up to multiplication by a factor of the form $\pm t^kr$, where $k\in \Z$ and
$r\in \C^*$.
\end{theorem}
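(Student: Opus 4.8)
The plan is to isolate the two genuine sources of ambiguity in the construction — the choice of a finite presentation $\langle g_1,\dots,g_k\mid r_1,\dots,r_l\rangle$ of $\pi$, and the choice of the distinguished generator index $i$ (any $i$ with $\e(g_i)\neq 0$ being admissible) — and to show that each of them changes $\Delta_{\pi,\e}^\a$ only by a unit of $\ct=\C[t^{\pm1}]$. Since the group of units of $\ct$ is precisely $\{\pm t^k r : k\in\Z,\ r\in\C^*\}$, this is exactly the asserted indeterminacy. Throughout I would write $N=(\a\otimes\e)(M)$ for the Fox Jacobian with $\a\otimes\e$ applied to each entry, viewed as an $l\times k$ array of blocks of size $\dim\a$; for a set $J$ with $|J|=l-k+1$ let $N_J$ be $N$ with the block rows indexed by $J$ deleted and let $N_J^{(i)}:=(\a\otimes\e)(M_{J,\{i\}})$ be the further deletion of the $i$-th block column, so that $Q_i=\gcd_J\det N_J^{(i)}$; and for a generator $g_j$ set $B_j=\id-t^{\e(g_j)}\a(g_j)=(\a\otimes\e)(1-g_j)$, which is invertible over $\C(t)$ whenever $\e(g_j)\neq 0$.

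First I would prove independence of the index $i$. The key input is Fox's fundamental identity $\sum_j\frac{\partial w}{\partial g_j}(g_j-1)=w-1$ in $\Z[F_k]$; applied to each relator $r_m$ and pushed into $\Z[\pi]$, where $r_m=1$, it gives $\sum_j\frac{\partial r_m}{\partial g_j}(g_j-1)=0$, and applying $\a\otimes\e$ shows that the $(\dim\a)k\times\dim\a$ block-column matrix with blocks $B_1,\dots,B_k$ lies in the right kernel of $N$, hence of each $N_J$. Because the $i$-th block of this kernel matrix is the invertible matrix $B_i$, the classical relation between the maximal minors of a matrix and the maximal minors of a matrix spanning its kernel (a generalized Cramer/Pl\"ucker identity) gives, for every admissible $J$ and all admissible $i,i'$,
\[
\det\bigl(N_J^{(i)}\bigr)\cdot\det B_{i'}\;=\;\pm\,\det\bigl(N_J^{(i')}\bigr)\cdot\det B_i .
\]
Taking $\gcd$ over $J$ and using $\gcd(cx_1,cx_2,\dots)=c\cdot\gcd(x_1,x_2,\dots)$ in the principal ideal domain $\ct$, this yields $Q_i\det B_i^{-1}=u\cdot Q_{i'}\det B_{i'}^{-1}$ for a unit $u\in\ct^{\times}$.

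Next I would prove independence of the presentation, using that any two finite presentations are joined by a chain of elementary Tietze moves. Adjoining a generator $g_{k+1}$ with a defining relation $g_{k+1}=w(g_1,\dots,g_k)$ replaces $N$ by a block matrix $\begin{pmatrix}N&0\\ \ast&\id\end{pmatrix}$ — the new block column vanishing on the old $2$-cells and the new block row ending in the identity after $\a\otimes\e$ — so the relevant minors reduce, by expansion along the last block column, to the old ones, and $\det B_i$ is untouched. Adjoining a relator $r_{l+1}$ lying in the normal closure of $r_1,\dots,r_l$ appends to $N$ a block row which, by the product rule for Fox derivatives together with $r_m=1$ in $\pi$, is a left $\ct$-linear combination of the old block rows (with $\dim\a\times\dim\a$ matrix coefficients); one must then check that the new block minors created thereby do not make the gcd $Q_i$ any smaller. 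I expect this last verification to be the main obstacle: the new block minors that retain the appended block row involve scalar rows that are $\ct$-combinations of scalar rows sampled from several old block rows, and to rewrite them as $\ct$-multiples of the old block minors $\det N_J^{(i)}$ one seems to need to feed the Fox fundamental relations back in (or first pass to a deficiency-one presentation and argue there); the terms in which the sampled rows all come from a single old block row are immediately of the right shape, and it is the ``mixed'' terms that require care. Granting this, Step 1 lets one carry a compatible distinguished generator along the chain of Tietze moves, and the two steps together prove the theorem. A more conceptual alternative for this step is to identify $Q_i$, up to a unit of $\ct$, with an appropriate combination (a Fitting-type invariant together with the order of $H_0$) of the twisted homology of the presentation $2$-complex with coefficients twisted by $\a\otimes\e$, which is manifestly presentation-independent; that route trades the combinatorics above for the standard but not completely trivial identification of Wada's Fox-calculus formula with twisted Reidemeister torsion.
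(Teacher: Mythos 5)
The paper itself offers no proof of this statement: it is quoted verbatim from Wada's paper and justified only by the citation \cite[Theorem~1]{Wad94}, so there is no in-house argument to compare yours against. That said, your outline is essentially the strategy of Wada's original proof, and the first half of it is correct and complete. The column with blocks $(\a\otimes\e)(g_j-1)$ does lie in the right kernel of every $N_J$ by the Fox fundamental identity, and the relation $\det N_J^{(i)}\det B_{i'}=\pm\det N_J^{(i')}\det B_i$ follows by right-multiplying block column $i$ of $N_J^{(i')}$ by $(\a\otimes\e)(g_i-1)$ and performing block column operations; taking the gcd over $J$ then shows that $Q_i\det B_i^{-1}$ and $Q_{i'}\det B_{i'}^{-1}$ agree up to a unit of $\ct$, which is exactly the allowed indeterminacy. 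Your treatment of the generator-adjoining Tietze move is likewise fine: the new Fox matrix is block triangular with an identity block, and expansion along the new block column reduces every relevant minor to an old one.

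The gap is exactly where you flag it, and it is the heart of the theorem rather than a routine verification. After adjoining a redundant relator, the new block row is $\sum_s \pm (\a\otimes\e)(u_s)R_{m_s}$ with invertible $d\times d$ coefficient blocks. For a block minor using the new row together with old block rows indexed by $S$, the terms with $m_s\in S$ can be killed by genuine scalar row operations inside the square submatrix, but the surviving sum over $m_s\notin S$ does not split: the determinant is multilinear in the $d$ scalar rows of the new block, and expanding produces scalar minors of the old matrix built from $k-2$ whole block rows plus $d$ scalar rows scattered over several distinct old block rows. These are not block minors, so they are not visibly divisible by $Q_i=\gcd_J\det N_J^{(i)}$, and without that divisibility the gcd could a priori shrink. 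Neither of your proposed repairs is carried out, and neither is automatic: passing to a deficiency-one presentation is not available for a general finitely presented $\pi$ (the theorem is not restricted to knot groups), and the identification of $Q_i\det B_i^{-1}$ with the Reidemeister torsion of the $\a\otimes\e$-twisted chain complex of the presentation $2$-complex is itself a nontrivial theorem (Kitano, Kirk--Livingston) whose presentation-independence rests on simple-homotopy arguments. So as written you have proved independence of the index $i$ and invariance under the generator-adjoining move, but the redundant-relator case --- the substantive content of Wada's Theorem 1 --- remains open.
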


\medskip

Finally,  let $K\subset S^3$ be a knot and let $\a \co \pi(K)\to \gl(k,\C)$ be a representation.
As before, we denote by $\e\co \pi(K)\to \Z$ the epimorphism that sends the oriented meridian of $K$ to 1. We write
\[ \Delta_K^\a=\Delta_{\pi,\e}^\a.\]
If $\a\co \pi(K)\to \gl(1,\C)$ is the trivial one-dimensional representation, then Wada's invariant is determined by the classical Alexander polynomial $\Delta_K$. More precisely, we have
\[ \Delta_K^\a=\frac{\Delta_K}{1-t}.\]
Wada's invariant equals the twisted Reidemeister torsion of a knot, and is closely related
to the twisted Alexander polynomial of a knot, which was first introduced by Lin \cite{Lin01}.
We refer to \cite{Ki96,FV10} for more details about Wada's invariant, its interpretation as twisted Reidemeister torsion and its relationship to twisted Alexander polynomials.

\subsection{Proof of Theorem \ref{mainthm}}\label{section:proof}

Before we provide the proof of Theorem \ref{mainthm} we need to introduce two more definitions.
First, given a non-zero polynomial $p(t)=\sum_{i=r}^s a_it^i\in \ct$ with $a_r\ne 0$ and $a_s\ne 0$, we write
\[ \deg(p(t))=s-r.\]
If $f(t)=p(t)/q(t)\in \C(t)$ is a non-zero rational function, we write
\[ \deg(f(t))=\deg(p(t))-\deg(q(t)).\]
Note that if Wada's invariant of a triple $(\pi,\e,\a)$ is non-zero, then the degree of Wada's invariant $\Delta_{\pi,\e}^\a$ is well defined.
\medskip

We can now formulate the following  theorem.

\begin{theorem}\label{thm:technical}
Let $\pi$ be a group and let
\[ f:\pi \to \ll A,t\,|\, f(B)=tBt^{-1}\rr\]
be a splitting.
We denote by  $\e\co \ll A,t\,|\, f(B)=tBt^{-1}\rr\to \Z$ the canonical epimorphism  which is given by
$\e(t)=1$ and $\e(a)=0$ for $a\in A$.
If $\a\co \pi\to \gl(k,\C)$ is a representation such that $\Delta_{\pi,\e}^\a\ne 0$, then
\[ \deg \Delta_{\pi,\e}^\a \leq k(\rank(B)-1).\]
\end{theorem}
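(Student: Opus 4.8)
The plan is to compute Wada's invariant from a presentation of $\pi$ adapted to the given splitting, and to bound the degree directly from the structure of that presentation. Since the splitting exhibits $\pi\cong\ll A,t\,|\,\varphi(B)=tBt^{-1}\rr$ with $B$ free of rank $r:=\rank(B)$, I would first choose a finite presentation $\ll a_1,\dots,a_p\mid \rho_1,\dots,\rho_q\rr$ of the base group $A$, together with a free basis $b_1,\dots,b_r$ of $B$, each $b_j$ written as a word in the $a_i$. (One subtlety: $A$ need not be finitely presented in general, but the standard move is to pass to the induced splitting $\pi=\ll A_{[0,n]},t\mid \varphi_n(A_{[0,n-1]})=tA_{[0,n-1]}t^{-1}\rr$ of Lemma \ref{lem:moresplittings}, which changes neither $\rank(B)$ — the amalgamating subgroup there is $A_{[0,n-1]}$, \emph{not} $B$ — so actually one must be slightly careful and instead invoke Lemma \ref{lem:splitexists} to assume from the outset that $A$ and $B$ are finitely generated; one then argues $A$ may be taken finitely presented by a further standard reduction, or works with a possibly infinite but locally controlled presentation and takes limits.) Then $\pi$ has the presentation obtained by adjoining the generator $t$ and the $r$ relations $t\,b_j\,t^{-1}=\varphi(b_j)$, where $\varphi(b_j)$ is again a word in the $a_i$: that is,
\[
\pi=\ll a_1,\dots,a_p,\,t\ \mid\ \rho_1,\dots,\rho_q,\ \ t b_1 t^{-1}\varphi(b_1)^{-1},\ \dots,\ t b_r t^{-1}\varphi(b_r)^{-1}\rr .
\]

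Next I would compute the Fox-derivative matrix $M$ and apply $\a\otimes\e$, deleting the column corresponding to $t$ (note $\e(t)=1\ne 0$, so $t$ is a legitimate choice of $g_i$). The column-$t$-deleted matrix $M_{J,\{t\}}$ splits into a block coming from the relators $\rho_k$ of $A$ (whose Fox derivatives involve only the $a_i$, hence carry no $t$ and map under $\a\otimes\e$ to matrices over $\C$, independent of $t$) and a block of $r$ rows coming from the relations $t b_j t^{-1}\varphi(b_j)^{-1}$. For the latter, $\frac{\partial}{\partial a_i}\big(t b_j t^{-1}\varphi(b_j)^{-1}\big) = t\,\frac{\partial b_j}{\partial a_i} - tb_jt^{-1}\varphi(b_j)^{-1}\frac{\partial\varphi(b_j)}{\partial a_i}$, and in the group $\pi$ we have $tb_jt^{-1}\varphi(b_j)^{-1}=1$, so this equals $t\,\frac{\partial b_j}{\partial a_i}-\frac{\partial\varphi(b_j)}{\partial a_i}$. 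Applying $\a\otimes\e$: the $t$ records as the variable $t$ (since $\e$ of these group elements is $0$, the only $t$-power is the explicit one), giving a $kr\times kp$ block of the form $t\,\a(\partial b/\partial a) - \a(\partial\varphi(b)/\partial a)$, which is \emph{linear in $t$}, i.e. of the form $tP - Q$ with $P,Q$ constant $kr\times kp$ complex matrices. The key point is that $P$ is (up to the constant matrices $\a(\cdot)$) the Jacobian of the free basis $b_1,\dots,b_r$ of $B$ with respect to the generators of $A$; because $b_1,\dots,b_r$ freely generate the free group $B$, this Jacobian has a right inverse, so $P$ has rank $kr$ (over $\C(t)$, in fact over $\C$), reflecting that $B$ is free of rank exactly $r$.

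From here the degree bound is a determinant/linear-algebra estimate. Any $(\,(\text{number of generators})-1\,)\times(\,\cdot\,)$ square submatrix $M_{J,\{t\}}$ of the full Fox matrix, after applying $\a\otimes\e$, is a $k(p-1+r)\times k(p-1+r)$ block matrix whose only $t$-dependence sits in the $kr$-row block $tP-Q$ and is \emph{affine-linear} in $t$ there; expanding the determinant multilinearly in those $kr$ rows shows it is a polynomial in $t$ of degree at most $kr$. Meanwhile the denominator of Wada's invariant is $\det((\a\otimes\e)(1-t)) $ computed on the generator $t$, namely $\det(\mathrm{id}_k - t\,\a(t))$, which has degree exactly $k$ (its top coefficient is $\pm\det\a(t)\ne0$). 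Hence $\deg\Delta_{\pi,\e}^\a \le kr - k = k(r-1) = k(\rank(B)-1)$, which is the claimed inequality. I would then remark that this degree is independent of the choices by Theorem \ref{thm:wadadefined}, and that the passage to a finitely presented base group in the first paragraph does not affect $\Delta_{\pi,\e}^\a$ (twisted Reidemeister torsion being a homeomorphism/homotopy invariant, or directly because Tietze moves do not change Wada's invariant).

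**Main obstacle.** The genuine content — and the step I expect to be the crux — is the rank computation: showing that the degree contributed by the $r$ ``HNN relations'' is at most $kr$, with the $r$ matching $\rank(B)$ exactly. This works because $B$ is \emph{free}, so it admits a genuine free basis of size $\rank(B)$, the relations $tb_jt^{-1}=\varphi(b_j)$ are each linear in $t$, and there are exactly $\rank(B)$ of them; the multilinear determinant expansion then caps the $t$-degree at $k\cdot\rank(B)$. Subsidiary nuisances are (i) arranging a finite presentation with the base group genuinely finitely presented (handled via Lemma \ref{lem:splitexists} plus a standard reduction), and (ii) bookkeeping the general case $l>k-1$ where one must bound $\deg\det((\a\otimes\e)(M_{J,\{t\}}))$ uniformly over all admissible index sets $J$ and take the gcd — but the same row-wise linearity argument applies to every such submatrix, so the bound is uniform.
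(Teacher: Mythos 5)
Your proposal follows essentially the same route as the paper's proof: present $\pi$ by adjoining $t$ and one relation $tx_jt^{-1}\varphi(x_j)^{-1}$ per generator of $B$ to a finite presentation of $A$, delete the $t$-column of the Fox matrix, observe that the rows coming from the relators of $A$ are constant in $t$ while the rows coming from the HNN relations are affine-linear in $t$, bound the degree of each relevant minor by $k\cdot\rank(B)$ by multilinearity of the determinant in those rows, and subtract the degree $k$ of the denominator $\det(\id_k-\a(t)t)$.

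One correction of substance: the theorem makes no assumption that $B$ is free, and it is applied (in the proof of Theorem \ref{mainthm2}) to splittings over arbitrary groups $B$; indeed Theorem \ref{thm:fibsplitintro} is precisely about splittings over non-free groups. Your argument nowhere actually uses freeness: what you single out as the key point --- that the Fox Jacobian of a free basis of $B$ has full rank --- plays no role in the upper bound. All that matters is that $B$ admits a generating set of size $\rank(B)$, so that there are exactly $\rank(B)$ HNN relations, each affine-linear in $t$. Replace ``free basis'' by ``generating set of minimal size'' and the hypothesis ``$B$ free'' disappears, leaving the paper's argument. Two minor remarks: the worry about finite presentability of $A$ is dispatched by the paper's standing convention that all groups are finitely presented; and applying $\a\otimes\e$ to $t\cdot g$ with $g\in\ker(\e)$ yields $t\,\a(t)\a(g)$ rather than $t\,\a(g)$, which is harmless for the degree count.
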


In \cite{FKm06} (see also \cite{Fr12}) it was shown
that if $K$ is a knot and $\a\co \pi(K)\to\gl(k,\C)$ is a representation such that $\Delta_K^\a\ne 0$, then
\be \label{equ:genuslowerbound} \deg \Delta_K^\a \leq k(2\op{genus}(K)-1).\ee
In light of the discussion in Section \ref{section:splitk}, we can view Theorem \ref{thm:technical} as a generalization of (\ref{equ:genuslowerbound}).

\begin{proof}
Let $\pi$ be a group and let
\[ \pi =\ll g_1,\dots,g_k,t\,|\, r_1,\dots,r_l,\varphi(b)=tbt^{-1}\mbox{ for all }b\in B\rr\]
be a splitting, where  $\varphi\co B\to A$ is a monomorphism and $B$ is a rank-$d$ subgroup of  $A=\ll g_1,\dots,g_k,t\,|\, r_1,\dots,r_l\rr$.
We pick generators $x_1,\dots,x_{d}$ for $B$. Note that
\[ \ba{ll} &\ll g_1,\dots,g_k,t\,|\, r_1,\dots,r_l,\varphi(b)=tbt^{-1}\mbox{ for all }b\in B\rr\\
=&\ll g_1,\dots,g_k,t\,|\, r_1,\dots,r_l,\varphi(x_1)^{-1}tx_1t^{-1},\dots,\varphi(x_{d})^{-1}tx_{d}t^{-1}\rr.\ea\]  We write $K:=\ker(\e)$.

We denote by $M$
the $(l+{d})\times (k+1)$-matrix over $\Z[\pi]$ that is given by all the Fox derivatives of the relators. We make the following observations.
\bn
\item The relators $r_1,\dots,r_l$ are words in $g_1,\dots,g_k$. The Fox derivatives of the $r_i$ with respect to the $g_j$ thus lie in $\Z[K]$.
\item For any $i\in \{1,\dots,k\}$ and $j\in \{1,\dots,{r}\}$ we have
\[ \frac{\partial}{\partial g_i}\left(\varphi(x_j)^{-1}tx_jt^{-1}\right)=\frac{\partial}{\partial g_i}\left(\varphi(x_j)^{-1}\right)+\varphi(x_j)^{-1}t\frac{\partial}{\partial g_i}x_j.\]
The same argument as in (1) shows that the first term lies in $\Z[K]$, and one can similarly see that the second term is of the form $t\cdot g$, where $g\in \Z[K]$.
\en
Thus $M_{\emptyset,\{k+1\}}$, the matrix obtained from $M$ by deleting the $(k+1)$-st column, is of the form
\[ M_{\emptyset,\{k+1\}}=P+tQ,\]
where $P$ and $Q$ are matrices over $\Z[K]$, and where all but the last ${d}$ rows of $Q$ are zero.

Let $\a\co \pi\to \gl(k,\C)$ be a representation and $J\subset \{1,\dots,d+l\}$ a subset  with $|J|=d+l-k$.
It follows from the above that
\[ M_{J,\{k+1\}}=P_J+tQ_J,\]
where $P_J$ and $Q_J$ are matrices over $\Z[K]$ and where at most  $d$ rows of $Q_J$ are non-zero.
We then see  that
\[ \det((\a\otimes \e)(M_{J,\{k+1\}}))=\det(\a(P_J)+t\a(Q_J)),\]
where at most  $kr$ rows of $\a(Q_J)$ are non-zero.
If $\det(\a(P_J)+t\a(Q_J))$ is non-zero, then it follows from an elementary argument
that
\[ \deg(\det(\a(P_J)+t\a(Q_J)))\leq kr.\]
We now consider
\[ Q:=\mbox{gcd}\{\det((\a\otimes \e)(M_{J,\{k+1\}}))\,|\, J\subset \{1,\dots,l\} \mbox{ with }|J|=d+l-k\}.\]
By the above, if $Q\ne 0$, then $\deg(Q)\leq kr$.

Since $\e(t)=1$,
\[ \Delta_{\pi,\e}^\a =Q\cdot \det((\a\otimes \e)(1-t))^{-1}=Q\cdot \det(\id_k-\a(t)t)^{-1}\in \C(t).\]
Finally, we suppose that $\Delta_{\pi,\e}^\a\ne 0$. By the above, this implies that $Q\ne 0$.
In particular, we see that
\[ \ba{rcl} \deg(\Delta_{\pi,\e}^\a)&=&\deg\left(Q\cdot \det(\id_k-\a(t)t))\right)\\
&=&\deg(Q)-\deg(\det(\id_k-\a(t)t))\\
&=&\deg(Q)-k\\
&\leq &kr-k=k(\rank{B}-1).\ea\]
This concludes the proof of the theorem.
\end{proof}

The last ingredient in the proof of Theorem \ref{mainthm} is the following result from \cite{FV12a}. The proof of the theorem builds on the virtual fibering theorem of Agol \cite{Ag08} (see also \cite{FKt12}), which applies for knot complements by the work of Liu \cite{Liu11}, Przytycki-Wise \cite{PW11,PW12a} and Wise \cite{Wi09,Wi12a,Wi12b}.

\begin{theorem}\label{thm:fv12a}
Let $K$ be a knot. Then there exists
a representation $\a\co \pi(K)\to \gl(k,\C)$ such that  $\Delta_K^\a\ne 0$ and such that
\[ \deg \Delta_K^\a=k(2g(K)-1).\]
\end{theorem}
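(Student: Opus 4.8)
The plan is to realize the extremal representation by a \emph{permutation representation} associated with a finite cover of $X(K)$ that fibers, or nearly fibers, in the direction of $\e_K$. Since the inequality $\deg\Delta_K^\a\le k(2g(K)-1)$ is automatic from $(\ref{equ:genuslowerbound})$, I only need to produce one representation $\a$ for which equality holds. First I would dispose of the unknot: there $\Delta_K=1$, so the trivial one‑dimensional representation gives $\Delta_K^\a\doteq 1/(1-t)$ of degree $-1=1\cdot(2\cdot 0-1)$, as required. Henceforth assume $K$ is non‑trivial, so that $x_{X(K)}(\e_K)=2g(K)-1\ge 1$.

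The geometric engine is Agol's virtual fibering criterion. By the work of Agol \cite{Ag08,Ag12}, Wise \cite{Wi12a}, Przytycki--Wise \cite{PW12a} and Liu \cite{Liu11}, the group $\pi(K)$ is virtually RFRS; Agol's criterion \cite{Ag08}, applied to the irreducible aspherical manifold $X(K)$ together with the class $\e_K$, then produces a finite cover $p\colon X_\Gamma\to X(K)$ of some degree $d$, corresponding to a finite‑index subgroup $\Gamma\le\pi(K)$, such that the pullback $p^*\e_K\in H^1(X_\Gamma;\Z)$ lies in the closure of a fibered cone of the Thurston norm ball of $X_\Gamma$; that is, $p^*\e_K$ is a quasi‑fibered class. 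A minimal‑genus Seifert surface for $K$ pulls back to a surface dual to $p^*\e_K$ whose components all have negative Euler characteristic, which gives $x_{X_\Gamma}(p^*\e_K)\le d\,x_{X(K)}(\e_K)$, and Gabai's theorem that norm‑minimizing surfaces stay norm‑minimizing in finite covers \cite{Ga83} upgrades this to the equality $x_{X_\Gamma}(p^*\e_K)=d\,(2g(K)-1)$.

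Next I would take $\a\colon\pi(K)\to\gl(d,\C)$ to be the permutation representation on $\C[\pi(K)/\Gamma]$, i.e. the representation induced from the trivial representation of $\Gamma$, so that $k=\dim\a=d$. A Shapiro‑type identification of the $(\a\otimes\e_K)$‑twisted chain complex of $X(K)$ with the chain complex of the infinite cyclic cover of $X_\Gamma$ determined by $p^*\e_K$ (see \cite{FV10}) yields, up to the indeterminacy of Theorem \ref{thm:wadadefined},
\[ \Delta_K^\a\doteq\Delta_{X_\Gamma,\,p^*\e_K}\in\C(t), \]
with both sides recorded in the same variable $t$. Because $p^*\e_K$ is quasi‑fibered, the right‑hand side is non‑zero and $\deg\Delta_{X_\Gamma,\,p^*\e_K}=x_{X_\Gamma}(p^*\e_K)$: if $b_1(X_\Gamma)=1$, a non‑zero class in the closure of a fibered ray is genuinely fibered and this is the classical monic Alexander polynomial of a fibered manifold, while if $b_1(X_\Gamma)\ge 2$ it follows from McMullen's theorem that the Alexander norm coincides with the Thurston norm on the closure of a fibered cone. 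Combining this with the preceding paragraph, $\Delta_K^\a\ne 0$ and
\[ \deg\Delta_K^\a=x_{X_\Gamma}(p^*\e_K)=d\,(2g(K)-1)=k\,(2g(K)-1), \]
which is the assertion; fed into Theorem \ref{thm:technical}, this gives $\rk(B)\ge 2g(K)$ for every subgroup $B$ over which $\pi(K)$ splits, i.e. Theorem \ref{mainthm}.

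The hard part is entirely the geometric input: Agol's RFRS fibering criterion, and the fact that knot groups are virtually RFRS, rest on Wise's theory of special cube complexes and the cubulation results of Przytycki--Wise and Liu, all of which I would take as black boxes. Once those are granted, the remaining issues are bookkeeping: checking that the Shapiro identification is compatible with the $\Z$‑grading so that the degree comparison is valid in the single variable $t$ (and controlling the $\pm t^k r$ indeterminacy of Wada's invariant), separating the cases $b_1(X_\Gamma)=1$ and $b_1(X_\Gamma)\ge 2$ in the step ``degree equals Thurston norm'' for quasi‑fibered classes, and correctly invoking Gabai's multiplicativity of the Thurston norm to transport the computation from $X_\Gamma$ back to $X(K)$.
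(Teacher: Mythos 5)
The paper does not prove Theorem \ref{thm:fv12a}: it simply quotes Theorem~1.2 of \cite{FV12a} and observes that the twisted Reidemeister torsion there translates into Wada's invariant via \cite{Ki96,FV10}. Your proposal therefore goes beyond the paper's own treatment, and what you sketch is in fact the strategy that \cite{FV12a} carries out: use Agol's RFRS criterion (applicable to knot complements by Wise, Przytycki--Wise, Liu) to pass to a finite-index subgroup $\Gamma\le\pi(K)$ whose pulled-back class is (quasi-)fibered; take $\a=\operatorname{ind}_\Gamma^{\pi(K)}\mathbf{1}$, a permutation representation of rank $k=d=[\pi(K):\Gamma]$; use a Shapiro-type identification $\Delta_K^\a\doteq\tau(X_\Gamma,p^*\e_K)$; and convert the degree computation on $X_\Gamma$ back to $X(K)$ by Gabai's multiplicativity of the Thurston norm under finite covers. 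The unknot base case, the permutation-representation choice, and the Shapiro step (with the bookkeeping you flag when $\e_K|_\Gamma$ is not onto, so that $p^*\e_K$ is a multiple of a primitive class) are all in the right spirit.

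The one step you should not treat as bookkeeping is the implication ``$p^*\e_K$ quasi-fibered $\Rightarrow\deg\Delta_{X_\Gamma,p^*\e_K}=x_{X_\Gamma}(p^*\e_K)$.'' McMullen's theorem identifies the Alexander norm with the Thurston norm on the closure of a fibered cone, but the degree of the \emph{one-variable} specialization in the direction $p^*\e_K$ can strictly drop below the Alexander norm if $p^*\e_K$ lies on the boundary of that cone (coefficients can cancel under the specialization $\Z[H_1]\to\Z[t^{\pm1}]$), and Agol's criterion as you invoke it only promises a quasi-fibered, not necessarily a genuinely fibered, pullback. You would either need to upgrade to an honestly fibered class (by a small perturbation in the fibered cone and a further finite cover, say), or argue directly that the Reidemeister torsion has the required degree for boundary classes. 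This is precisely a point that \cite{FV12a} handles with care, and it is the genuine gap separating your sketch from a complete proof; the geometric black boxes you cite are the right ones, and the remaining architecture is sound.
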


In \cite[Theorem~1.2]{FV12a} an analogous statement is formulated  for twisted Reidemeister torsion instead of Wada's invariant.
The theorem, as stated, now follows from the interpretation (see, for example, \cite{Ki96,FV10}) of Wada's invariant as twisted Reidemeister torsion.

We can now formulate and prove the following result, which is equivalent to Theorem \ref{mainthm}.

\begin{theorem}\label{mainthm2}
Let $K$ be a knot. If $\pi(K)$ splits over a group $B$, then $\rk(B)\geq 2g(K)$.
\end{theorem}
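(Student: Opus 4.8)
The plan is to combine the two main technical results stated above: the \emph{upper} bound on the degree of Wada's invariant coming from an arbitrary splitting (Theorem \ref{thm:technical}) and the \emph{lower} bound coming from a minimal-genus Seifert surface (Theorem \ref{thm:fv12a}). So suppose $\pi(K)$ splits over a group $B$, realized by an HNN decomposition $(A,B,t,\varphi)$, and let $\e\co\pi(K)\to\Z$ be its canonical epimorphism, i.e. $\e(t)=1$ and $\e(A)=0$.

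First I would observe that $\e$ is, up to sign, the standard abelianization $\e_K$. Indeed $H_1(S^3\sm\nu K;\Z)\cong\Z$ is generated by the class of the oriented meridian, so $\hom(\pi(K),\Z)\cong\Z$ is generated by $\e_K$; since $\e$ is onto, $\e=\pm\e_K$. Hence, for any representation $\a\co\pi(K)\to\gl(k,\C)$, the tensor representation $g\mapsto t^{\e(g)}\a(g)$ is obtained from $g\mapsto t^{\e_K(g)}\a(g)$ by applying the ring automorphism $t\mapsto t^{\pm1}$ of $\ct$. Since Wada's invariant is computed from Fox-derivative matrices via these tensor representations, it follows that $\Delta_{\pi(K),\e}^\a(t)$ equals $\Delta_K^\a(t^{\pm1})$ up to the usual $\pm t^k r$ ambiguity of Theorem \ref{thm:wadadefined}. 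In particular $\Delta_{\pi(K),\e}^\a\ne0$ if and only if $\Delta_K^\a\ne0$, and when they are nonzero they have the same degree, because the degree of a rational function is unchanged by $t\mapsto t^{-1}$.

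Now I would invoke Theorem \ref{thm:fv12a} to pick a representation $\a\co\pi(K)\to\gl(k,\C)$ with $\Delta_K^\a\ne0$ and $\deg\Delta_K^\a=k(2g(K)-1)$. By the previous paragraph, $\Delta_{\pi(K),\e}^\a\ne0$ and $\deg\Delta_{\pi(K),\e}^\a=k(2g(K)-1)$. Applying Theorem \ref{thm:technical} to the given splitting of $\pi(K)$ over $B$ then yields
\[ k\bigl(2g(K)-1\bigr)=\deg\Delta_{\pi(K),\e}^\a\le k\bigl(\rk(B)-1\bigr),\]
and dividing by $k\ge1$ gives $2g(K)-1\le\rk(B)-1$, i.e.\ $\rk(B)\ge2g(K)$, as claimed. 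Since Theorems \ref{thm:technical} and \ref{thm:fv12a} are already available, there is no remaining obstacle of substance; the only subtlety is the orientation bookkeeping relating $\e$ to $\e_K$ handled in the second paragraph (and the trivial-knot case $g(K)=0$, where the inequality $\rk(B)\ge0$ is automatic, is also covered).
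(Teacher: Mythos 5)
Your proposal is correct and follows essentially the same route as the paper: combine the lower bound $\deg\Delta_K^\a=k(2g(K)-1)$ from Theorem \ref{thm:fv12a} with the upper bound $\deg\Delta_{\pi,\e}^\a\le k(\rk(B)-1)$ from Theorem \ref{thm:technical}. The only cosmetic difference is in the orientation bookkeeping: the paper reorients the knot so that the canonical epimorphism of the splitting sends the meridian to $+1$, whereas you keep the orientation and observe that the substitution $t\mapsto t^{-1}$ does not change the degree of Wada's invariant; both are valid.
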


\begin{proof}
Let $K$ be a knot and let
\[ f\co \pi(K) \to \pi=\ll A,t\,|\, \varphi(B)=tBt^{-1}\rr\]
be an isomorphism. We denote by  $\e\co \ll A,t\,|\, \varphi(B)=tBt^{-1}\rr\to \Z$ the canonical epimorphism  which is given by
$\e(t)=1$ and $\e(a)=0$ for $a\in A$.

Note that $\e\circ f\co \pi(K)\to \Z$ is an epimorphism. In particular, it sends the meridian to either $1$ or $-1$. By possibly changing the orientation of the knot, we can assume that $\e\circ f\co \pi(K)\to \Z$ sends the meridian to  $1$.
By Theorem \ref{thm:fv12a}, there exists
a representation $\a\co \pi(K)\to \gl(k,\C)$ such that $ \Delta_K^\a\ne 0$ and such that
\[ \deg \Delta_K^\a=k(2g(K)-1).\]
By definition, we have
\[ \Delta_K^\a=\Delta_{\pi(K),\e\circ f}^\a=\Delta_{\pi,\e}^\a.\]
Theorem \ref{thm:technical} implies that
\[
\rank(B)
\geq  \frac{1}{k}\deg\left( \Delta_{\pi,\e}^\a\right)+1= \frac{1}{k}\deg\left( \Delta_{K}^\a\right)+1=2g(K). \]
\end{proof}


\begin{thebibliography}{10}

\bibitem[Ag04]{Ag04}
I. Agol, {\em  Tameness of hyperbolic $3$-manifolds}, unpublished paper (2007),	math/0405568.

\bibitem[Ag08]{Ag08}
I. Agol, {\em Criteria for virtual fibering}, Journal of Topology 1 (2008), 269-284.


\bibitem[Ag12]{Ag12}
I. Agol, {\em The virtual Haken conjecture}, with an appendix by I. Agol, D. Groves and J. Manning, preprint (2012).

\bibitem[AFW12]{AFW12} M. Aschenbrenner, S. Friedl and H. Wilton, {\em 3-Manifold groups}, preprint, 2012, arXiv:1206.0202

\bibitem[Al12]{Al12}
I. Altman, {\em Sutured Floer homology distinguishes between Seifert surfaces}, Topology Appl. 159 (2012), no. 14, 3143--3155.

\bibitem[Ar01]{Ar01}
G. Arzhantseva, {\em On quasiconvex subgroups of word hyperbolic groups}, Geom. Dedicata 87 (2001), no. 1-3, 191--208.

\bibitem[BP92]{BP92}
R. Benedetti and C. Petronio,
{\em Lectures on hyperbolic geometry},
Universitext, Springer-Verlag, Berlin, 1992.

\bibitem[BF94]{BF94} M. Bestvina and M. Feighn, {\em Outer limits}, preprint
(1994)


\bibitem[BNS87]{BNS87}
R. Bieri, W. Neu\-mann and R. Strebel, {\em A geometric invariant of discrete groups},
Invent. Math. 90 (1987), 451--477.

\bibitem[BS78]{BS78} R. Bieri and R. Strebel, {\em Almost finitely presented soluble groups}, Comment. Math. Helv. 53 (1978), 258--278. 


\bibitem[BZ84]{BZ84}
 M. Boileau and H. Zieschang, {\em Heegaard genus of closed orientable Seifert $3$-manifolds},
Invent. Math. 76 (1984), 455--468.

\bibitem[BZ85]{BZ85} G. Burde and H. Zieschang, {\em Knots}, Walter de Gruyter, Berlin, 1985.

\bibitem[Bu07]{Bu07}
J. O. Button, {\em Mapping tori with first Betti number at least two}, J. Math. Soc. Japan 59 (2007), no. 2, 351--370

\bibitem[Ca09]{Ca09}
D. Calegari, {\em scl}, MSJ Memoirs, 20. Mathematical Society of Japan, Tokyo, 2009.


\bibitem[CG06]{CG06} D. Calegari and D. Gabai, {\em Shrinkwrapping and the taming of hyperbolic
$3$-manifolds}, J. Amer. Math. Soc. 19 (2006), no. 2, 385--446.

\bibitem[CZ93]{CZ93}
D. J. Collins and H. Zieschang, {\em Combinatorial group theory and fundamental groups},
Algebra, VII, 1--166, 233-240, Encyclopaedia Math. Sci., 58, Springer, Berlin, 1993.



\bibitem[CS83]{CS83}
M. Culler and P.  Shalen, {\em
Varieties of group representations and splittings of $3$-manifolds},
Ann. Math. 117 (1983), no. 1, 109--146.

\bibitem[DFV12]{DFV12}
J. DeBlois, S. Friedl and S. Vidussi,
{\em Rank gradients of infinite cyclic covers of 3-manifolds}, Preprint (2012)

\bibitem[EL83]{EL83}
A. Edmonds and C. Livingston, {\em Group actions on fibered three-manifolds}, Comment. Math. Helv. 58 (1983), no. 4, 529--542.

\bibitem[EN85]{EN85}
D.  Eisenbud and W. Neu\-mann, {\em  Three-dimensional link theory and
invariants of plane curve singularities}, Annals of Mathematics
Studies, 110. Princeton University Press, Princeton, NJ, 1985.


\bibitem[Ei77a]{Ei77a}
J. Eisner, {\em A characterisation of non-fibered knots}, Mich. Math. J. 24 (1977), 41--44.

\bibitem[Ei77b]{Ei77b}
J. Eisner, {\em Knots with infinitely many minimal spanning surfaces}, Trans. Amer. Math. Soc. 229 (1977), 329--349.

\bibitem[Fa98]{Fa98}
B. Farb, {\em Relatively hyperbolic groups}, Geom. Funct. Anal. 8, No. 5 (1998), 810--840.

\bibitem[FG73]{FG73}
C. D.  Feustel and R. J. Gregorac, {\em On realizing HNN groups in 3-manifolds}, Pacific J. Math. 46 (1973), 381--387.

\bibitem[FF98]{FF98}
B. Freedman and M. Freedman, {\em Kneser-Haken finiteness for bounded $3$-manifolds locally free groups, and cyclic covers},
Topology 37 (1998), no. 1, 133--147.

\bibitem[Fr12]{Fr12}
S. Friedl, {\em Twisted Reidemeister torsion, the Thurston norm and fibered manifolds}, Preprint (2012). To appear in Geometriae Dedicata. 

\bibitem[FKm06]{FKm06} S. Friedl and T. Kim,
{\em The Thurston norm, fibered manifolds and twisted Alexander polynomials}, Topology 45 (2006), 929--953.


\bibitem[FKt12]{FKt12}
S. Friedl and T. Kitayama, {\em The virtual fibering theorem for $3$-manifolds}, Preprint (2012)




 \bibitem[FV10]{FV10}
S. Friedl and  S. Vidussi,
 {\em A survey of twisted Alexander polynomials}, The Mathematics of Knots: Theory and Application (Contributions in Mathematical and Computational Sciences), editors: Markus Banagl and Denis Vogel (2010), p. 45-94.

\bibitem[FV12a]{FV12a}
S. Friedl and S. Vidussi,
{\em The Thurston norm and twisted Alexander polynomials}, Preprint (2012)

\bibitem[FV12b]{FV12b}
S. Friedl and S. Vidussi,
{\em Minimal Genus on 4-manifolds with a Free Circle Action}, Preprint (2012)


\bibitem[FV12c]{FV12c}
S. Friedl and S. Vidussi,
{\em A Vanishing Theorem for Twisted Alexander Polynomials with Applications to Symplectic 4-manifolds},
Journal of the Eur. Math. Soc., to appear.

\bibitem[Ga83]{Ga83}
D. Gabai, {\em Foliations and the topology of $3$-manifolds}, J. Differ. Geom.  18 (1983), no. 3,
445--503.

\bibitem[Ga87]{Ga87}
D. Gabai, {\em Foliations and the topology of 3-manifolds. III.},
J. Differ. Geom. 26 (1987), 479--536.

\bibitem[Gr87]{Gr87}
M. Gromov, {\em Hyperbolic groups}, in S. M. Gersten (ed.)
Essays in Group Theory, Math. Sci.
Res. Inst. Publ. 8, Springer, New York (1987), 75--263.

\bibitem[Gu81]{Gu81}
R. Gustafson, {\em A simple genus-one knot with incompressible spanning surfaces of arbitrarily high genus},
Pacific J. Math. 96 (1981), no. 1, 81--98.




\bibitem[HJS13]{HJS13}
M. Hedden, A. Juh\'asz and S. Sarkar,
{\em  On Sutured Floer homology and the equivalence of Seifert surfaces}, Alg. Geom. Topology 13 (2013), 505--548.

\bibitem[Hei72]{Hei72}
W. Heil, {\em On Kneser's conjecture for bounded $3$-manifolds},
Proc. Cambridge Philos. Soc. 71 (1972), 243--246.

\bibitem[He76]{He76} J. Hempel, {\em $3$-Manifolds},
Ann. of Math. Studies, no. 86. Princeton University Press, Princeton, N. J., 1976.


\bibitem[He87]{He87}
J. Hempel, {\em Residual finiteness for $3$-manifolds}, Combinatorial group theory and topology (Alta, Utah, 1984), pp. 379--396, Ann. of Math. Stud., 111, Princeton Univ. Press, Princeton, NJ, 1987.

\bibitem[Hr10]{Hr10}
C. Hruska, {\em Relative hyperbolicity and relative quasi-convexity for countable groups}, Alg. Geom. Topology 10
 (2010), 1807--1856.

\bibitem[JS79]{JS79}
W. Jaco and P.  Shalen, {\em Seifert fibered spaces in $3$-manifolds}, Mem. Amer. Math. Soc. 21 (1979), no. 220.

\bibitem[Jo79]{Jo79}
K. Johannson, {\em On the mapping class group of simple 3-manifolds}, Topology of Low-Dimensional Manifolds (Proc. Second Sussex Conf., Chelwood Gate, 1977), Lecture Notes in Math., vol. 722,
Springer, Berlin, 1979, 48--66.


\bibitem[Ka05]{Ka05}
O. Kakimizu, {\em Classification of the incompressible spanning surfaces for prime knots of 10 or less crossings},
Hiroshima Math. J. 35 (2005), no. 1, 47--92.

\bibitem[Ki96]{Ki96}
T. Kitano, {\em Twisted Alexander polynomials and Reidemeister
torsion}, Pacific J. Math. 174 (1996), no. 2, 431--442.

\bibitem[Kr99]{Kr99}
P. Kronheimer, {\em Minimal genus in $S^1 \times M^3$}, Invent. Math. 135, no. 1: 45--61 (1999).

\bibitem[Li13]{Li13}
T. Li, {\em Rank and genus of $3$-manifolds},  J. Amer. Math. Soc. 26 (2013), 777--829. 



\bibitem[Lin01]{Lin01} X. S. Lin, {\em Representations of knot groups and twisted
Alexander polynomials}, Acta Math. Sin. (Engl. Ser.)  17 (2001), 361--380.


\bibitem[Liu11]{Liu11}
Y. Liu, {\em Virtual cubulation of
nonpositively curved graph manifolds}, J. of Topology, to appear (2011).



\bibitem[Ly71]{Ly71} H.C. Lyon, {\em
Incompressible surfaces in knot spaces}, Trans. Am. Math. Soc. 157 (1971), 53--62.

\bibitem[Ly74a]{Ly74a} H.C. Lyon, {\em Simple knots with unique spanning surfaces}, Topology 13 (1974), 275--279.


\bibitem[Ly74b]{Ly74b}
H. C. Lyon, {\em Simple knots without unique minimal surfaces},
Proc. Amer. Math. Soc. 43 (1974), 449--454.

\bibitem[MKS76]{MKS76}
 W. Magnus, A. Karrass and D. Solitar, {\em Combinatorial Group Theory: Presentations of
Groups in Terms of Generators and Relations}, Second revised edition. Dover Publications, Inc.,
New York (1976)

\bibitem[MP09]{MP09}
E. Martinez-Pedroza, {\em Combination of quasiconvex subgroups of relatively hyperbolic groups}, Groups Geom. Dyn. 3 (2009), no. 2, 317--342.


\bibitem[Ne65]{Ne65}
L. Neuwirth, {\em Knot groups}, Annals of Mathematics Studies, No. 56 Princeton University Press, Princeton, N.J. 1965.

\bibitem[PW11]{PW11}
 P. Przytycki and D. Wise, {\em Graph manifolds with boundary are virtually special}, J. of Topology, to appear (2011).

\bibitem[PW12a]{PW12a}
 P. Przytycki and D. Wise, {\em Mixed $3$-manifolds are virtually special}, preprint (2012).

\bibitem[PW12b]{PW12b}
 P. Przytycki and D. Wise, {\em Separability of embedded surfaces in $3$-manifolds}, preprint (2012)


\bibitem[Ro90]{Ro90}
D. Rolfsen, {\em Knots and links}, Corrected reprint of the 1976 original. Mathematics Lecture Series, 7. Publish or Perish, Inc., Houston, TX, 1990.

\bibitem[Sce67]{Sce67}
C. B. Schaufele, {\em The commutator group of a doubled knot},
Duke Math. J. 34 (1967), 677--681.

\bibitem[Sct53]{Sct53} H. Schubert, {\em Knoten und Vollringe}, Acta Math. 90 (1953), 131-286.


\bibitem[ScW07]{ScW07}
J.  Schultens and R. Weidmann, {\em On the geometric and the algebraic rank
of graph manifolds}, Pacific J. Math. 231 (2007), 481--510.

\bibitem[Sco80]{Sco80} P. Scott, {\em A new proof of the annulus and torus theorems}, Amer. J. Math. 102 (1980), 241-277.

\bibitem[ST80]{ST80}
H. Seifert and W. Threlfall, {\em A textbook of topology}, Academic Press, 1980.


\bibitem[Se80]{Se80} J.-P. Serre, {\em Trees}, Springer-Verlag, Berlin-New York, 1980.
%

\bibitem[Si96]{Si96}
D. Silver, {\em HNN bases and high-dimensional knots},
Proc. Amer. Math. Soc.  124  (1996),  no. 4, 1247--1252.

\bibitem[Str84]{Str84} R. Strebel, {\em Finitely presented soluble groups}, in Group Theory: Essays for Philip Hall, Academic Press, London, 1984. 

\bibitem[SW09a]{SW09a}
D. Silver and S. Williams, {\em Nonfibered knots and representation shifts}, Proceedings of the Postnikov
Memorial Conference, Banach Center Publications, 85 (2009), 101--107.

\bibitem[SW09b]{SW09b}
D. Silver and S. Williams, {\em Twisted Alexander Polynomials and Representation Shifts}, Bull.
London Math. Soc., 41 (2009), 535--540.

\bibitem[St59]{St59}
J. Stallings,
\emph{J. Grushko's theorem II, Kneser's conjecture}, Notices Amer. Math. Soc. 6 (1959),
No 559-165, 531--532.

\bibitem[St62]{St62}
J. Stallings, {\em On fibering certain 3--manifolds}, 1962 Topology
of 3--manifolds and related topics (Proc. The Univ. of Georgia
Institute, 1961), 95--100 Prentice-Hall, Englewood Cliffs, N.J.
(1962)


\bibitem[Th86]{Th86} W. P. Thurston, {\em A norm for the homology of $3$-manifolds}, Mem. Amer. Math. Soc. 339 (1986), 99--130.

\bibitem[Ts04]{Ts04}
Y. Tsutsumi, {\em Hyperbolic knots spanning accidental Seifert surfaces of arbitrarily high genus}, Math. Z. 246 (2004), no. 1-2, 167--175.




\bibitem[Wad94]{Wad94}
M. Wada, {\em Twisted Alexander polynomial for finitely presentable groups}, Topology 33 (1994), 241--256.


\bibitem[Wal68a]{Wal68a}
F. Waldhausen, {\em
The word problem in fundamental groups of sufficiently large irreducible 3-manifolds},  Ann. Math. (2) 88 (1968), 272-280.

\bibitem[Wal68b]{Wal68b}
F. Waldhausen, {\em On irreducible 3-manifolds which are sufficiently large},
Ann. Math. (2) 87 (1968), 56-88.

\bibitem[Wh73]{Wh73}
W. Whitten, {\em Isotopy types of knot spanning surfaces},
Topology  12  (1973), 373--380.

\bibitem[Wi09]{Wi09}
D. Wise, {\em The structure of groups with a quasi-convex hierarchy}, Electronic Res. Ann. Math. Sci 16 (2009), 44--55.

\bibitem[Wi12a]{Wi12a}
D. Wise, {\em The structure of groups with a quasi-convex hierarchy}, 189 pages, preprint (2012),
downloaded on October 29, 2012 from \\
\texttt{http://www.math.mcgill.ca/wise/papers.html}

\bibitem[Wi12b]{Wi12b}
D. Wise, {\em From riches to RAAGs: $3$-manifolds, right--angled Artin groups, and cubical geometry}, CBMS Regional Conference Series in Mathematics, 2012.
\end{thebibliography}
\end{document}